\title{The linear sampling method for data generated by small random scatterers\thanks{Submitted to the editors DATE.
\funding{This work was partially supported by Agence de l'Innovation de D\'efense (AID) via Centre Interdisciplinaire d'\'{E}tudes pour la D\'{e}fense et la S\'{e}curit\'{e} (CIEDS, project PRODIPO).}}}
\author{Josselin Garnier\thanks{CMAP, \'{E}cole Polytechnique, IP Paris, 91120 Palaiseau, France.}
\and Houssem Haddar\thanks{Inria, UMA, ENSTA Paris, IP Paris, 91120 Palaiseau, France.}
\and Hadrien Montanelli\footnotemark[3]}
\begin{document}

\maketitle

\begin{abstract}
We present an extension of the linear sampling method for solving the sound-soft inverse scattering problem in two dimensions with data generated by randomly distributed small scatterers. The theoretical justification of our novel sampling method is based on a rigorous asymptotic model, a modified Helmholtz--Kirchhoff identity, and our previous work on the linear sampling method for random sources. Our numerical implementation incorporates boundary elements, Singular Value Decomposition, Tikhonov regularization, and Morozov's discrepancy principle. We showcase the robustness and accuracy of our algorithms with a series of numerical experiments.
\end{abstract}

\begin{keywords}
inverse acoustic scattering problem, Helmholtz equation, linear sampling method, passive imaging, singular value decomposition, Tikhonov regularization, ill-posed problems
\end{keywords}

\begin{MSCcodes}
35J05, 35R30, 35R60, 65M30, 65M32
\end{MSCcodes}

\section{Introduction}

Inverse scattering problems arise across a multitude of fields, ranging from medical imaging and non-destructive testing to radar technology and seismology. At their core, these problems involve the task of deducing the characteristics of an object or medium from the scattered signals it generates. These problems involve several challenges, including nonlinearity, which is particularly pronounced near resonance, making linearization inapplicable; these are also severely ill-posed, raising questions about uniqueness and stability, and necessitating the inclusion of regularization. Additionally, for iterative procedures, reconstruction time can be lengthy and prior knowledge is required for initialization. In this context, fast, data-driven algorithms, such as the linear sampling method (LSM), can be useful in reducing computational costs \cite{colton2003, colton1996, colton1997}, and initializing more sophisticated algorithms \cite{bao2007}. For an exploration of the history and evolution of the LSM, readers can refer to the 2018 \textit{SIAM Review} article by Colton and Kress \cite{colton2018}. Further mathematical insights can be found in dedicated books \cite{cakoni2014, cakoni2016, colton2019}; recent developments include the generalized LSM, in both full- and limited-aperture measurements \cite{audibert2014, audibert2017}.

Two acquisition configurations are commonly used in inverse scattering problems---active and passive. Active imaging involves sending waves through \textit{controlled} sources and recording the medium's response through \textit{controlled} sensors. Passive imaging, on the other hand, employs controlled sensors but relies on \textit{random}, \textit{uncontrolled} sources (such as microseisms and ocean swells in seismology). In this setup, it is the cross-correlations between the recorded signals that convey information about the medium \cite{garnier2016, gouedard2008}. Passive imaging is a rapidly growing research topic because it enables the imaging of areas where the use of active sources is not possible due to, e.g., safety or environmental reasons. Additionally, even in scenarios where active sources could be employed, utilizing passive sources helps reduce operational costs and enhances stealth in defense applications. Other applications encompass crystal tomography and seismic interferometry for volcano monitoring \cite{koulakov2014, shapiro2005}. Passive imaging has also demonstrated success in other fields, including structural health monitoring \cite{duroux2010, sabra2011}, oceanography \cite{godin2010, siderius2010, woolfe2015}, and medical elastography \cite{gallot2011}. We illustrate active and passive imaging in the context of subsurface imaging in \cref{fig:imaging}.

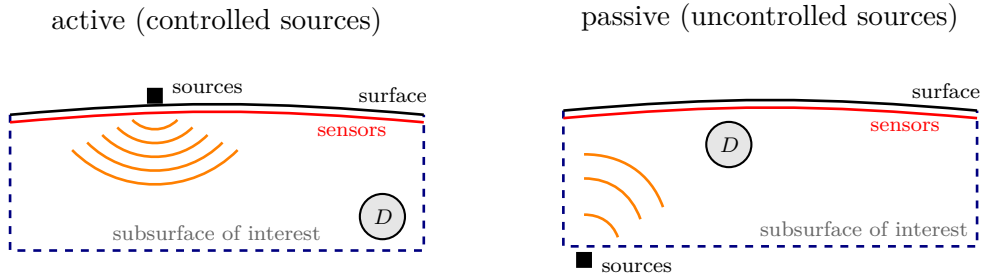
\begin{figure}
\hspace{0.75cm}
\vspace*{-2em}
{\raisebox{4mm}{
\begin{tikzpicture}[scale=1.0,every node/.style={font=\scriptsize}]

  \pgfmathsetmacro{\wid}{5.50} 
  \pgfmathsetmacro{\hei}{1.80}
  
  \coordinate (r1) at (0., 0.);  
  \coordinate (r2) at (\wid, 0.);
  \coordinate (r3) at (\wid, \hei); 
  \coordinate (r4) at (0., \hei); 
  \coordinate (r5) at (\wid, 0.95*\hei); 
  \coordinate (r6) at (0., 0.95*\hei); 
  \coordinate (rlame1) at (\wid, 0.90*\hei); 
  \coordinate (rlame2) at (0., 0.90*\hei);
  \coordinate (z) at (0.75*\wid, 0.5*\hei);
  \coordinate (D) at (0.9*\wid, 0.25*\hei);

  \node[text width=4cm,anchor=south east,white!40!black,yshift=-0mm,xshift=0cm] at (r2) {subsurface of interest};
  \draw[line width=1,color=blue!50!black,dashed] (r4) to (r1) to (r2) to (r3);  
  \draw[line width=1] (r3) to[out=175, in=05] (r4);  
  \draw[line width=1,color=red] ([yshift=-1mm] r3) to[out=175, in=05] ([yshift=-1mm] r4);  
  \pgfmathsetmacro{\srcx}{0.35*\wid}
  \pgfmathsetmacro{\srcy}{1.14*\hei}  
  \draw[mark=square*,mark size=2,mark options={color=black,line width=2}] plot[] coordinates{(\srcx, \srcy)};
  \node[text width=2cm,anchor=north west,red,yshift=-0.20mm,xshift=-1.55cm] at (r3) {sensors};
  \node[text width=2cm,anchor=north west,black,yshift=5mm,xshift=-1.0cm] at (r3) {surface};
  \node[text width=2cm,anchor=south west,black,yshift=-1mm,xshift=1mm] at (\srcx, \srcy) {sources};
  
  \draw[black, thick, fill=black!10] (D) circle (0.3);
  \node[anchor=west,black,yshift=0mm,xshift=-2.5mm] at (D) {$D$};

  \pgfmathsetmacro{\wsize}{0.3} \pgfmathsetmacro{\wdepth}{0.32}
  \coordinate (w1) at (\srcx-\wsize, \srcy - \wdepth);
  \coordinate (w2) at (\srcx+\wsize, \srcy - \wdepth);
  \draw[orange,line width=1.](w1) to[out=-45, in=225] (w2) ;
  \pgfmathsetmacro{\wsize}{0.5} \pgfmathsetmacro{\wdepth}{0.32+0.50*0.20}
  \coordinate (w1) at (\srcx-\wsize, \srcy - \wdepth);
  \coordinate (w2) at (\srcx+\wsize, \srcy - \wdepth);
  \draw[orange,line width=1.](w1) to[out=-45, in=225] (w2) ;
  \pgfmathsetmacro{\wsize}{0.70} \pgfmathsetmacro{\wdepth}{0.32+1.00*0.20}
  \coordinate (w1) at (\srcx-\wsize, \srcy - \wdepth);
  \coordinate (w2) at (\srcx+\wsize, \srcy - \wdepth);
  \draw[orange,line width=1.](w1) to[out=-45, in=225] (w2) ;
  \pgfmathsetmacro{\wsize}{0.90} \pgfmathsetmacro{\wdepth}{0.32+1.50*0.20}
  \coordinate (w1) at (\srcx-\wsize, \srcy - \wdepth);
  \coordinate (w2) at (\srcx+\wsize, \srcy - \wdepth);
  \draw[orange,line width=1.](w1) to[out=-45, in=225] (w2) ;
  \pgfmathsetmacro{\wsize}{1.10} \pgfmathsetmacro{\wdepth}{0.32+2.00*0.20}
  \coordinate (w1) at (\srcx-\wsize, \srcy - \wdepth);
  \coordinate (w2) at (\srcx+\wsize, \srcy - \wdepth);
  \draw[orange,line width=1.](w1) to[out=-45, in=225] (w2);

  \node[anchor=south] at (0.5*\wid, 1.5*\hei) {{\normalsize active (controlled sources)}};
  
\end{tikzpicture}
}}
\hspace{0.25cm} 
\begin{tikzpicture}[scale=1.0,every node/.style={font=\scriptsize}]

  \pgfmathsetmacro{\wid}{5.50} 
  \pgfmathsetmacro{\hei}{1.80}
  
  \coordinate (r1) at (0., 0.);  
  \coordinate (r2) at (\wid, 0.);
  \coordinate (r3) at (\wid, \hei); 
  \coordinate (r4) at (0., \hei); 
  \coordinate (r5) at (\wid, 0.95*\hei); 
  \coordinate (r6) at (0., 0.95*\hei); 
  \coordinate (rlame1) at (\wid, 0.90*\hei); 
  \coordinate (rlame2) at (0., 0.90*\hei);
  \coordinate (z) at (0.75*\wid, 0.5*\hei);
  \coordinate (D) at (0.4*\wid, 0.75*\hei);
	  
  \node[text width=3cm,anchor=south east,white!40!black,yshift=-0mm,xshift=0.25cm] at (r2) {subsurface of interest};
  \draw[line width=1,color=blue!50!black,dashed] (r4) to (r1) to (r2) to (r3);  
  \draw[line width=1] (r3) to[out=175, in=05] (r4);  
  \draw[line width=1,color=red] ([yshift=-1mm] r3) to[out=175, in=05] ([yshift=-1mm] r4);  
  \pgfmathsetmacro{\srcx}{0.05*\wid}
  \pgfmathsetmacro{\srcy}{-0.1*\hei}  
  \draw[mark=square*,mark size=2,mark options={color=black,line width=2}] plot[] coordinates{(\srcx, \srcy)};
  \node[text width=2cm,anchor=north west,red,yshift=-0.20mm,xshift=-1.55cm] at (r3) {sensors};
  \node[text width=2cm,anchor=north west,black,yshift=5mm,xshift=-1.0cm] at (r3) {surface};
  \node[text width=2cm,anchor=south west,black,yshift=-3mm,xshift=1mm] at (\srcx, \srcy) {sources};
  
  \draw[black, thick, fill=black!10] (D) circle (0.3);
  \node[anchor=west,black,yshift=0mm,xshift=-2.5mm] at (D) {$D$};
  
  \pgfmathsetmacro{\wsize}{0.45} \pgfmathsetmacro{\wdepth}{0.30}
  \coordinate (w1) at (\srcx+\wsize, \srcy + \wdepth);
  \coordinate (w2) at (\srcx, \srcy + 2*\wdepth);
  \draw[orange,line width=1.](w1) to[out=110, in=0] (w2) ;
  \pgfmathsetmacro{\wsize}{0.75} \pgfmathsetmacro{\wdepth}{0.54}
  \coordinate (w1) at (\srcx+\wsize, \srcy + \wdepth);
  \coordinate (w2) at (\srcx, \srcy + 2*\wdepth);
  \draw[orange,line width=1.](w1) to[out=110, in=0] (w2) ;
  \pgfmathsetmacro{\wsize}{1.05} \pgfmathsetmacro{\wdepth}{0.70}
  \coordinate (w1) at (\srcx+\wsize, \srcy + \wdepth);
  \coordinate (w2) at (\srcx, \srcy + 2*\wdepth);
  \draw[orange,line width=1.](w1) to[out=110, in=0] (w2);

  \node[anchor=south] at (0.5*\wid, 1.5*\hei) {{\normalsize passive (uncontrolled sources)}};
  
\end{tikzpicture}
\caption{In subsurface imaging, the objective is to capture an image of a specific subsurface area to identify a defect, denoted as $D$---this could involve, for instance, locating shallow or deep geothermal reservoirs. In active imaging (on the left), both the sources and sensors are controlled. Vibroseismic trucks on the surface can generate signals, and sensors can be strategically placed and moved as desired. In passive imaging (on the right), controlled sensors are still utilized, but the signal originates from uncontrolled, random sources within the subsurface, such as microseisms and ocean swells.}
\label{fig:imaging}
\end{figure}

In a previous paper \cite{montanelli2023}, we introduced an extension of the LSM to address the sound-soft inverse scattering problem involving random sources. This current study builds upon that foundation, focusing on a scenario where a single \textit{controlled} source, operating at a specified wavelength $\lambda$, illuminates a small \textit{random} scatterer and an object of comparable size to $\lambda$---our goal is to reconstruct the shape of the latter. The reflection of the wave field transmitted by the point source on the small scatterer serves as a random source, aligning with the context explored in our earlier work. The motivation for this research stems from the necessity, when imaging an unknown obstacle $D$, to illuminate it with a diverse set of incoming waves. For deterministic, controlled point sources in two dimensions, one considers a family of sources located at points $\bs{z}_m$ generating the incident fields
\begin{align}\label{eq:pt-src}
\phi(\bs{x},\bs{z}_m) = \frac{i}{4}H_0^{(1)}(k\vert\bs{x}-\bs{z}_m\vert),
\end{align} 
and measures the resulting scattered fields $u^s(\bs{x}_j,\bs{z}_m)$ at points $\bs{x}_j$ to populate the near-field matrix $N$ with entries $N_{jm} = u^s(\bs{x}_j,\bs{z}_m)$. In the case of random, uncontrolled sources at positions $\bs{z}_\ell$, as demonstrated in our prior work \cite{montanelli2023}, the relevant matrix is the cross-correlation matrix $C$. Its entries are given by:
\begin{align}\label{eq:cross-cor-mat}
C_{jm} = \frac{2ik\vert\Sigma\vert}{L}\sum_{\ell=1}^L\overline{u(\bs{x}_j,\bs{z}_\ell)}u(\bs{x}_m,\bs{z}_\ell) - \left[\phi(\bs{x}_j,\bs{x}_m) - \overline{\phi(\bs{x}_j,\bs{x}_m)}\right],
\end{align}
where $u(\bs{x},\bs{z})=\phi(\bs{x},\bs{z})+u^s(\bs{x},\bs{z})$ is the total field for the incident wave $\phi(\bs{x},\bs{z})$, and $\vert\Sigma\vert$ is the area of the surface $\Sigma$ on which the random sources are distributed. (Note that it is not necessary to know or estimate the positions $\bs{z}_\ell$ to assemble the matrix $C$.) 

Suppose now that we only have a single \textit{controlled} source located at a given point $\bs{z}$, as opposed to several positions $\bs{z}_m$. This is insufficient for reconstructing the obstacle's shape with the LSM (the near-field matrix $N$ would have rank one). To address this limitation, we propose introducing a random medium between the source and the obstacle. We illustrate this concept, in acoustic scattering, by considering a single small random scatterer between the source and the obstacle---a seemingly simple yet powerful model of a random medium that allows us to apply the LSM in a novel manner, with a modified version of \cref{eq:cross-cor-mat}. The extension to elastic waves in subsurface imaging will be the subject of future work.

Our method is supported by various key components, such as a rigorous asymptotic model (see \cref{sec:asymptotics}) and a \textit{modified} Helmholtz--Kirchhoff identity (discussed in \cref{sec:mod-HK-identity}). The paper explores numerical implementations, utilizing boundary elements alongside Singular Value Decomposition (SVD), Tikhonov regularization, and Morozov's discrepancy principle (detailed in \cref{sec:numerics}). A set of numerical experiments is presented in the same section, offering valuable insights into the practical applications of our research.

\section{Asymptotic model}\label{sec:asymptotics}

Mathematically, the configuration described in the introduction unfolds as follows. Consider the incident field \cref{eq:pt-src} generated by a point source located at $\bs{z}_\epsilon=\lambda\epsilon^{-q}e^{i\theta_z}$ for some scalars $q>0$ and $\theta_z\in[0,2\pi]$. (Complex variables will be employed to determine the coordinates of points in the plane.) Here, $\epsilon>0$ is a small dimensionless parameter, $k>0$ is the wavenumber and $\lambda=2\pi/k$ is the wavelength, which we assume to be independent of $\epsilon$. Let $D$ be an obstacle of size proportional to $\lambda$ and independent of $\epsilon$, i.e., there exists a constant $C>0$ such that its radius verifies $\rho(D)=\frac{1}{2}\sup_{\bs{x},\bs{y}\in D}\vert\bs{x}-\bs{y}\vert=C\lambda$. Without loss of generality, we assume that the geometric center of the obstacle $D$ is at the origin. We also consider a small disk $D_\epsilon=D_\epsilon(\bs{y}_\epsilon)$ of radius $\rho(D_\epsilon)=\lambda\epsilon$ centered at $\bs{y}_\epsilon = \lambda\epsilon^{-p}e^{i\theta_y}$ for some scalars $0<p<q$ and $\theta_y\in[0,2\pi]$.\footnote{Throughout the paper, we will assume that $k^2$ is not a Dirichlet eigenvalue of $-\Delta$ in both $D$ and $D_\epsilon$. For the latter, this assumption is always verified for small enough $\epsilon$.} Finally, let $B\subset\R^2\setminus\overline{D\cup D_\epsilon}$ be a compact set whose size and distance to $D$ are proportional to $\lambda$ and independent of $\epsilon$, i.e., $\rho(B) \propto \lambda$ and $d(B,D)=\inf_{\bs{x}\in B,\,\bs{y}\in D}\vert\bs{x}-\bs{y}\vert \propto \lambda$. Measurements will be taken inside the volume $B$, which is consistent with \cite{montanelli2023}. We also assume that $\partial D$ and $\partial B$ are smooth enough to allow the forming of Dirichlet and Neumann traces and the application of partial integration formulas (Lipschitz continuity is a sufficient condition). To summarize, we make the following assumptions (see also \cref{fig:w^s}):\footnote{We recall that $f(\epsilon)=\OO(\epsilon)$ means that there exists a constant $C>0$, independent of $\epsilon$, such that for small enough $\epsilon$, $\vert f(\epsilon)\vert \leq C \epsilon$.}
\begin{align}
& \bs{y}_\epsilon = \lambda\epsilon^{-p}e^{i\theta_y}, \quad \bs{z}_\epsilon = \lambda\epsilon^{-q}e^{i\theta_z}, \quad 0 < p < q, \\
& \rho(D),\,\rho(B),\,d(B, D) = \OO(1), \quad \rho(D_\epsilon) = \OO(\epsilon). \nonumber
\end{align}

We examine the scattering of the incident field $\phi(\cdot,\bs{z}_\epsilon)$ by $D$ and $D_\epsilon$, which generates the scattered field $w^s_\epsilon$. More precisely, let $w^s_\epsilon(\cdot,\bs{y}_\epsilon,\bs{z}_\epsilon)\in H^1_{\mrm{loc}}(\R^2\setminus\{D\cup D_\epsilon\})$ be the solution to the sound-soft scattering problem
\begin{align}\label{eq:w^s}
\left\{
\begin{array}{ll}
\Delta w^s_\epsilon(\cdot,\bs{y}_\epsilon,\bs{z}_\epsilon) + k^2 w^s_\epsilon(\cdot,\bs{y}_\epsilon,\bs{z}_\epsilon) = 0 \quad \text{in $\R^2\setminus\{\overline{D\cup D_\epsilon}\}$}, \\[0.4em]
w^s_\epsilon(\cdot,\bs{y}_\epsilon,\bs{z}_\epsilon) = -\phi(\cdot,\bs{z}_\epsilon) \quad \text{on $\partial D\cup\partial D_\epsilon$}, \\[0.4em]
\text{$w^s_\epsilon(\cdot,\bs{y}_\epsilon,\bs{z}_\epsilon)$ is radiating}.
\end{array}
\right.
\end{align}
Note that the (Sommerfeld) radiation condition in \cref{eq:w^s} reads
\begin{align}\label{eq:Sommerfeld}
\lim_{\vert\bs{x}\vert\to\infty} \sqrt{\vert\bs{x}\vert}\left(\frac{\bs{x}}{\vert\bs{x}\vert}\nabla_{\bs{x}} w^s_\epsilon(\bs{x},\bs{y}_\epsilon,\bs{z}_\epsilon) - ikw^s_\epsilon(\bs{x},\bs{y}_\epsilon,\bs{z}_\epsilon)\right) = 0, \quad \text{(uniformly in $\bs{x}/\vert\bs{x}\vert$)}.
\end{align}
The condition \cref{eq:Sommerfeld} ensures that the solution represents an outgoing wave.

\begin{figure}
\centering
\begin{tikzpicture}[scale=0.9]

\def\R{3.5};
\def\s{0.55};
\def\t{8};
\def\x{-0.9238795325112867};
\def\y{0.38268343236508984};
\def\zz{3}

\draw[thick, fill=black!10] plot[smooth cycle] coordinates {(-2*\R/\t, -1.5*\R/\t) (-2*\R/\t, 1.5*\R/\t) (2*\R/\t, \R/\t) (2*\R/\t, -\R/\t)};
\node[anchor=north] at (0, \R/\t) {$D(\bs{0})$};

\draw[black, thick, fill=black!10] plot[smooth cycle] coordinates {(4*\R/\t, 4*\R/\t) (6.5*\R/\t, 0*\R/\t) (4*\R/\t, -4*\R/\t) (3.5*\R/\t, -\R/\t) (3.5*\R/\t, \R/\t)};
\node[anchor=north] at (4.75*\R/\t, \R/\t) {$B$};

\node[draw, fill, circle, scale=\s] (A) at (-\zz*\R, 0.00*\R) {};
\node[anchor=east] at (-\zz*\R*1.025, 0.00*\R) {$\phi(\cdot,\bs{z}_\epsilon)$};
\node (B) at (-0.25*\R, 0.00*\R) {};
\draw[thick, black, -{Stealth[bend]}] (A) to [bend right=30] (B);
\node[anchor=north] at (-0.50*\zz*\R, -0.50*\R) {$u^s$};

\node[draw, black, fill=black!10, circle, thick, scale=1.25*\s] (C) at (1.25*\R*\x, 1.25*\R*\y) {};
\node[anchor=south] at (1.25*\R*\x, 1.25*\R*\y*1.05) {$D_\epsilon(\bs{y}_\epsilon)$};
\draw[thick, black, -{Stealth[bend]}] (A) to [bend left=30] (C);
\node[anchor=south] at (-2.50*\R, 0.45*\R) {$v^i_\epsilon$};

\draw[thick, black, -{Stealth[bend]}] (C) to [bend left=10] (B);
\node[anchor=south] at (-0.6*\R, 0.30*\R) {$v^s_\epsilon$};

\end{tikzpicture}
\caption{The incident field $\phi(\cdot,\bs{z}_\epsilon)$ is generated by a point source located at $\bs{z}_\epsilon=\lambda\epsilon^{-q}e^{i\theta_z}$. It is scattered by a small disk $D_\epsilon$ of radius $\OO(\epsilon)$ centered at $\bs{y}_\epsilon=\lambda\epsilon^{-p}e^{i\theta_y}$ and an obstacle $D$ of radius $\OO(1)$ centered at the origin. Measurements are taken near the obstacle $D$ in a volume $B$, whose size is also $\OO(1)$. The resulting scattered field, $w^s_\epsilon$, can be approximated with an error $\OO(\vert\log\epsilon\vert^{-1}\epsilon^p\epsilon^{q/2})$ in the $H^1(B)$-norm by the three-term sum $u^s+v^i_\epsilon+v^s_\epsilon$. As $\epsilon\to0$, the radius of $D_\epsilon$ goes to $0$, and $\bs{y}_\epsilon$ and $\bs{z}_\epsilon$ shoot to infinity. The scattering sequences, along with the amplitudes of the scattered fields, are also shown in \cref{tab:asymptotics}.}
\label{fig:w^s}
\end{figure}
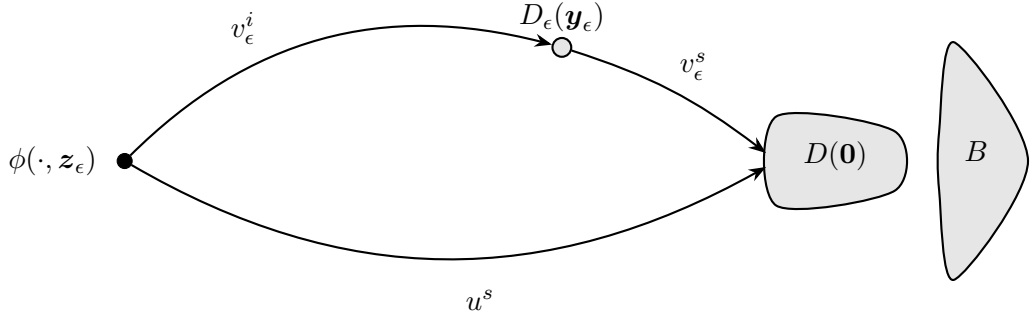

We shall show that $w^s_\epsilon(\cdot,\bs{y}_\epsilon,\bs{z}_\epsilon)$ can be approximated as the sum of three terms,
\begin{align}
w_\epsilon^s(\cdot,\bs{y}_\epsilon,\bs{z}_\epsilon) \approx u^s(\cdot,\bs{z}_\epsilon) + v_\epsilon^i(\cdot,\bs{y}_\epsilon,\bs{z}_\epsilon) + v_\epsilon^s(\cdot,\bs{y}_\epsilon,\bs{z}_\epsilon)
\end{align}
with an error $\OO(\vert\log\epsilon\vert^{-1}\epsilon^p\epsilon^{q/2})$ in the $H^1(B)$-norm; the scattered fields in the expansion above, $u^s(\cdot,\bs{z}_\epsilon)\in H^1_{\mrm{loc}}(\R^2\setminus D)$, $v_\epsilon^i(\cdot,\bs{y}_\epsilon,\bs{z}_\epsilon)\in H^1_{\mrm{loc}}(\R^2\setminus D_\epsilon)$, and $v_\epsilon^s(\cdot,\bs{y}_\epsilon,\bs{z}_\epsilon)\in H^1_{\mrm{loc}}(\R^2\setminus D)$, are the solutions to the sound-soft scattering problems
\begin{align}\label{eq:u^s}
\left\{
\begin{array}{ll}
\Delta u^s(\cdot,\bs{z}_\epsilon) + k^2 u^s(\cdot,\bs{z}_\epsilon) = 0 \quad \text{in $\R^2\setminus\overline{D}$}, \\[0.4em]
u^s(\cdot,\bs{z}_\epsilon) = -\phi(\cdot,\bs{z}_\epsilon) \quad \text{on $\partial D$}, \\[0.4em]
\text{$u^s(\cdot,\bs{z}_\epsilon)$ is radiating},
\end{array}
\right.
\end{align}
\begin{align}\label{eq:v^i}
\left\{
\begin{array}{ll}
\Delta v_\epsilon^i(\cdot,\bs{y}_\epsilon,\bs{z}_\epsilon) + k^2 v_\epsilon^i(\cdot,\bs{y}_\epsilon,\bs{z}_\epsilon) = 0 \quad \text{in $\R^2\setminus\overline{D_\epsilon}$}, \\[0.4em]
v_\epsilon^i(\cdot,\bs{y}_\epsilon,\bs{z}_\epsilon) = -\phi(\cdot,\bs{z}_\epsilon) \quad \text{on $\partial D_\epsilon$}, \\[0.4em]
\text{$v_\epsilon^i(\cdot,\bs{y}_\epsilon,\bs{z}_\epsilon)$ is radiating},
\end{array}
\right.
\end{align}
and
\begin{align}\label{eq:v^s}
\left\{
\begin{array}{ll}
\Delta v_\epsilon^s(\cdot,\bs{y}_\epsilon,\bs{z}_\epsilon) + k^2 v_\epsilon^s(\cdot,\bs{y}_\epsilon,\bs{z}_\epsilon) = 0 \quad \text{in $\R^2\setminus\overline{D}$}, \\[0.4em]
v_\epsilon^s(\cdot,\bs{y}_\epsilon,\bs{z}_\epsilon) = -v^i_\epsilon(\cdot,\bs{y}_\epsilon,\bs{z}_\epsilon) \quad \text{on $\partial D$}, \\[0.4em]
\text{$v_\epsilon^s(\cdot,\bs{y}_\epsilon,\bs{z}_\epsilon)$ is radiating}.
\end{array}
\right.
\end{align}
We illustrate the scattering problems \cref{eq:w^s} and \cref{eq:u^s}--\cref{eq:v^s} in \cref{fig:w^s}.

We start by proving estimates on $u^s(\cdot,\bs{z}_\epsilon)$, which corresponds to the wave field transmitted by the point source $\phi(\cdot,\bs{z}_\epsilon)$ scattered by $D$ alone. 

\begin{lemma}[Estimates on $u^s$]\label{lem:u^s}
There exists a constant $C>0$, independent of $\epsilon$, such that for small enough $\epsilon$,
\begin{align}
\Vert u^s(\cdot,\bs{z}_\epsilon)\Vert_{H^1(B)} \leq C \epsilon^{q/2}
\end{align}
and
\begin{align}
\Vert u^s(\cdot,\bs{z}_\epsilon)\Vert_{H^{1/2}(\partial D_\epsilon)} \leq C \epsilon^{p/2}\epsilon^{q/2}.
\end{align}
\end{lemma}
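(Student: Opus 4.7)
The plan is to reduce both estimates to two ingredients: (i)~well-posedness of the exterior sound-soft Dirichlet problem \cref{eq:u^s}, which under the non-resonance assumption yields a bounded solution operator from $H^{1/2}(\partial D)$ to $H^1_{\mrm{loc}}(\R^2\setminus\overline{D})$; and (ii)~the large-argument asymptotics of the Hankel functions $H_0^{(1)}$, $H_1^{(1)}$ together with the far-field expansion of radiating Helmholtz solutions. Since the source $\bs{z}_\epsilon$ is far from $\partial D$, and $\partial D_\epsilon$ is far from the obstacle $D$ off which $u^s$ radiates, everything will be controlled by these large-distance asymptotics.

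For the first bound, well-posedness of \cref{eq:u^s} gives
\[
\Vert u^s(\cdot,\bs{z}_\epsilon)\Vert_{H^1(B)} \leq C\,\Vert\phi(\cdot,\bs{z}_\epsilon)\Vert_{H^{1/2}(\partial D)},
\]
with $C$ depending only on $B$, $D$, and $k$. To bound the right-hand side, I invoke
\[
H_\nu^{(1)}(t) = \sqrt{\frac{2}{\pi t}}\,e^{i(t-\nu\pi/2-\pi/4)}\bigl(1+\OO(1/t)\bigr), \quad \nu=0,1,\quad t\to\infty.
\]
Since $\rho(D)=\OO(1)$ and $\vert\bs{z}_\epsilon\vert=\lambda\epsilon^{-q}$, every $\bs{x}\in\partial D$ satisfies $k\vert\bs{x}-\bs{z}_\epsilon\vert\sim\epsilon^{-q}$, so both $\vert\phi(\bs{x},\bs{z}_\epsilon)\vert$ and $\vert\nabla_{\bs{x}}\phi(\bs{x},\bs{z}_\epsilon)\vert$ are $\OO(\epsilon^{q/2})$ uniformly on $\partial D$. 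This yields $\Vert\phi(\cdot,\bs{z}_\epsilon)\Vert_{H^1(\partial D)}\leq C\epsilon^{q/2}$, and a fortiori the same bound in $H^{1/2}(\partial D)$.

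For the second bound, $u^s(\cdot,\bs{z}_\epsilon)$ is a radiating Helmholtz solution in $\R^2\setminus\overline{D}$, so it admits a far-field pattern $u^\infty_\epsilon$ with $\Vert u^\infty_\epsilon\Vert_{L^\infty(S^1)}\leq C\Vert\phi(\cdot,\bs{z}_\epsilon)\Vert_{H^{1/2}(\partial D)}\leq C\epsilon^{q/2}$, together with the standard uniform expansion
\[
u^s(\bs{x},\bs{z}_\epsilon) = \frac{e^{ik\vert\bs{x}\vert}}{\sqrt{\vert\bs{x}\vert}}\,u^\infty_\epsilon\!\left(\frac{\bs{x}}{\vert\bs{x}\vert}\right) + \OO(\vert\bs{x}\vert^{-3/2})
\]
and a matching one for $\nabla u^s$. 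Since $p<q$ and $\vert\bs{y}_\epsilon\vert=\lambda\epsilon^{-p}$, every $\bs{x}\in\partial D_\epsilon$ satisfies $\vert\bs{x}\vert\sim\epsilon^{-p}$, yielding the pointwise estimate $\vert u^s(\bs{x},\bs{z}_\epsilon)\vert+\vert\nabla u^s(\bs{x},\bs{z}_\epsilon)\vert\leq C\epsilon^{p/2}\epsilon^{q/2}$ on $\partial D_\epsilon$. The stated $H^{1/2}$ bound then follows from $\vert\partial D_\epsilon\vert=\OO(\epsilon)$ and interpolation between $L^2(\partial D_\epsilon)$ and $H^1(\partial D_\epsilon)$; the delicate point is to verify that no spurious positive power of $1/\epsilon$ is introduced by the fractional Sobolev norm on the shrinking curve, but since all quantities enter the $L^2$ and $H^1$ norms with non-negative powers of the arclength, this is immediate (we in fact gain an extra factor $\sqrt{\epsilon}$).
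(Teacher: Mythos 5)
Your first estimate is the paper's argument in all but notation: the paper realizes the bounded solution operator of \cref{eq:u^s} explicitly as $-\mathscr{S}S^{-1}$ (single-layer representation, \cref{lem:appendix-ext}), and the bound $\Vert\phi(\cdot,\bs{z}_\epsilon)\Vert_{H^{1/2}(\partial D)}\leq C\epsilon^{q/2}$ via the large-argument Hankel asymptotics of \cref{lem:appendix-large-x} is exactly the same. For the second estimate the paper instead evaluates the single-layer potential of density $S^{-1}\phi(\cdot,\bs{z}_\epsilon)$ on $\partial D_\epsilon$ through the operator $T_\epsilon^T$ of \cref{lem:appendix-ext}, whose norm is $\OO(\epsilon^{p/2})$ simply because the kernel $\phi(\bs{x},\bs{y})$ with $\bs{x}\in\partial D_\epsilon$, $\bs{y}\in\partial D$ is pointwise $\OO(\epsilon^{p/2})$. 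Your far-field-pattern route captures the same $1/\sqrt{d}$ decay and is viable, but two points need repair.

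First, the remainder in your far-field expansion is not an absolute $\OO(\vert\bs{x}\vert^{-3/2})$: its constant is proportional to the size of the radiating solution, which here is itself $\OO(\epsilon^{q/2})$. As written, the remainder on $\partial D_\epsilon$ is $\OO(\epsilon^{3p/2})$, and since the paper only assumes $0<p<q$, in the regime $q>2p$ this \emph{dominates} the main term $\OO(\epsilon^{p/2}\epsilon^{q/2})$ and the argument collapses. You must write
\[
u^s(\bs{x},\bs{z}_\epsilon) = \frac{e^{ik\vert\bs{x}\vert}}{\sqrt{\vert\bs{x}\vert}}\,u^\infty_\epsilon\!\left(\frac{\bs{x}}{\vert\bs{x}\vert}\right) + \OO\!\left(\epsilon^{q/2}\vert\bs{x}\vert^{-3/2}\right),
\]
tracking the linear dependence of the remainder on the data---or, more simply, bound $u^s$ and $\nabla u^s$ pointwise on $\partial D_\epsilon$ directly from the single-layer representation over $\partial D$, as the paper does, which avoids the issue entirely.

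Second, your claimed extra gain of $\sqrt{\epsilon}$ is an artifact of using the unscaled surface measure on the shrinking curve. The paper defines $\Vert f\Vert_{H^{1/2}(\partial D_\epsilon)}=\Vert\widehat f_\epsilon\Vert_{H^{1/2}(\partial\widehat D)}$ with $\widehat f_\epsilon(\widehat{\bs{x}})=f(\bs{y}_\epsilon+\lambda\epsilon\widehat{\bs{x}})$ (see \cref{sec:appendix-int}); under this normalization a function whose values and gradient are pointwise $\OO(M)$ has $H^{1/2}(\partial D_\epsilon)$-norm $\OO(M)$, with no additional factor. The stated bound $C\epsilon^{p/2}\epsilon^{q/2}$ is the correct one in this convention, and it is the convention the rest of the paper is calibrated to: the inversion of $S_\epsilon$ in \cref{thm:appendix-int} and the final error estimate of \cref{thm:e} rely on these scaled norms, so importing an extra $\sqrt{\epsilon}$ here would propagate an inconsistency downstream.
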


\begin{proof}
We write $u^s(\cdot,\bs{z}_\epsilon)=S g(\cdot,\bs{z}_\epsilon)$ on $\partial D$ with the single layer operator
\begin{align}
& S:H^{-1/2}(\partial D)\to H^{1/2}(\partial D), \\
& Sg(\bs{x}) = \int_{\partial D}\phi(\bs{x},\bs{y})g(\bs{y})ds(\bs{y}), \quad \bs{x}\in \partial D. \nonumber
\end{align}
Once we solve $Sg(\cdot,\bs{z}_\epsilon)=-\phi(\cdot,\bs{z}_\epsilon)$ for the surface density $g(\cdot,\bs{z}_\epsilon)$, the solution in $B$ reads $u^s(\cdot,\bs{z}_\epsilon)=\mathscr{S}g(\cdot,\bs{z}_\epsilon)=-\mathscr{S}S^{-1}\phi(\cdot,\bs{z}_\epsilon)$ with
\begin{align}
& \mathscr{S}:H^{-1/2}(\partial D)\to H^1(B), \\
& \mathscr{S}g(\bs{x}) = \int_{\partial D}\phi(\bs{x},\bs{y})g(\bs{y})ds(\bs{y}), \quad \bs{x}\in B. \nonumber
\end{align}
Using the continuity of the operators $\mathscr{S}$ (see \cref{lem:appendix-ext}) and $S^{-1}$, and asymptotics for large arguments of Hankel functions (see \cref{lem:appendix-large-x}), we obtain, for small enough $\epsilon$,
\begin{align}
\Vert u^s(\cdot,\bs{z}_\epsilon)\Vert_{H^1(B)} \leq \Vert\mathscr{S}\Vert\,\Vert S^{-1}\Vert\,\Vert\phi(\cdot,\bs{z}_\epsilon)\Vert_{H^{1/2}(\partial D)} \leq C\epsilon^{q/2},
\end{align}
with a constant $C$ independent of $\epsilon$.\footnote{We will consistently denote all constants independent of $\epsilon$ as ``$C$,'' irrespective of potential variations between inequalities or equalities.}

For the second inequality, the solution reads $u^s(\cdot,\bs{z}_\epsilon)=-T_\epsilon^TS^{-1}\phi(\cdot,\bs{z}_\epsilon)$ on $\partial D_\epsilon$ with
\begin{align}
& T_\epsilon^T:H^{-1/2}(\partial D)\to H^{1/2}(\partial D_\epsilon), \\
& T_\epsilon^Tg(\bs{x}) = \int_{\partial D}\phi(\bs{x},\bs{y})g(\bs{y})ds(\bs{y}), \quad \bs{x}\in \partial D_\epsilon. \nonumber
\end{align}
Using the estimate for $T_\epsilon^T$ in \cref{lem:appendix-ext}, we arrive at
\begin{align}
\Vert u^s(\cdot,\bs{z}_\epsilon)\Vert_{H^{1/2}(\partial D_\epsilon)} \leq \Vert T_\epsilon^T\Vert\,\Vert S^{-1}\Vert\,\Vert\phi(\cdot,\bs{z}_\epsilon)\Vert_{H^{1/2}(\partial D)} \leq C\epsilon^{p/2}\epsilon^{q/2}.
\end{align}
\end{proof}

We interpret the estimates in \cref{lem:u^s} as follows: the small norm of $u^s(\cdot,\bs{z}_\epsilon)$ can be attributed to the wave field transmitted by the point source covering a distance $d=\OO(\epsilon^{-q})$ to reach $D$, with its amplitude decaying as $1/\sqrt{d}$. Subsequently, during the evaluation in the proximity of $D$ within $B$, there is no additional loss of signal amplitude. However, when we evaluate it on $\partial D_\epsilon$, some signal amplitude is lost, as it has to travel back from $D$ to $\partial D_\epsilon$ covering a distance of $\OO(\epsilon^{-p})$, resulting in an additional decaying term of $\OO(\epsilon^{p/2})$.

We continue with estimates on $v^i_\epsilon(\cdot,\bs{y}_\epsilon,\bs{z}_\epsilon)$, representing the wave field transmitted by the point source $\phi(\cdot,\bs{z}_\epsilon)$ scattered solely by $D_\epsilon(\bs{y}_\epsilon)$.

\begin{lemma}[Estimates on $v^i_\epsilon$]\label{lem:v^i} 
The solution to (\ref{eq:v^i}) reads
\begin{align}
v^i_\epsilon(\bs{x},\bs{y}_\epsilon,\bs{z}_\epsilon) = -\frac{i}{4}\sum_{n=-\infty}^{+\infty}\frac{H_n^{(1)}(k\vert\bs{y}_\epsilon-\bs{z}_\epsilon\vert)J_n(2\pi\epsilon)}{H_n^{(1)}(2\pi\epsilon)}e^{-in(\mu_\epsilon+\pi)} H_n^{(1)}(k\vert\bs{x}-\bs{y}_\epsilon\vert)e^{in\theta}
\end{align}
in the polar coordinate system $(\vert\bs{x}-\bs{y}_\epsilon\vert,\theta)$ centered at $\bs{y}_\epsilon$; the angle $\mu_\epsilon$ is the angle of $\bs{y}_\epsilon$ in the polar coordinate system centered at $\bs{z}_\epsilon$. Moreover, there exists a constant $C>0$, independent of $\epsilon$, such that for small enough $\epsilon$,
\begin{align}
\Vert v^i_\epsilon(\cdot,\bs{y}_\epsilon,\bs{z}_\epsilon)\Vert_{H^1(B)} \leq C\vert\log\epsilon\vert^{-1}\epsilon^{p/2}\epsilon^{q/2}
\end{align}
and 
\begin{align}
\Vert v^i_\epsilon(\cdot,\bs{y}_\epsilon,\bs{z}_\epsilon)\Vert_{H^{1/2}(\partial D)} \leq C\vert\log\epsilon\vert^{-1}\epsilon^{p/2}\epsilon^{q/2}.
\end{align}
\end{lemma}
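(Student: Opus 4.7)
The strategy is first to derive the explicit series by separation of variables in polar coordinates $(r,\theta)$ centered at $\bs{y}_\epsilon$, and then to extract the claimed rates from classical asymptotics of Bessel and Hankel functions. On $\partial D_\epsilon$ one has $|\bs{x}-\bs{y}_\epsilon|=\lambda\epsilon$, which is strictly smaller than $|\bs{y}_\epsilon-\bs{z}_\epsilon|$ for small $\epsilon$ (since $q>p$), so Graf's addition theorem applied to $H_0^{(1)}(k|\bs{x}-\bs{z}_\epsilon|)$ produces an absolutely convergent Fourier expansion of the Dirichlet data $-\phi(\cdot,\bs{z}_\epsilon)$ in the basis $\{e^{in\theta}\}$, with the phase $e^{-in(\mu_\epsilon+\pi)}$ arising from the argument of $\bs{z}_\epsilon-\bs{y}_\epsilon$. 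Writing the radiating exterior solution as $\sum_n \alpha_n H_n^{(1)}(k|\bs{x}-\bs{y}_\epsilon|)e^{in\theta}$---the general form of a radiating multipole expansion about $\bs{y}_\epsilon$, which is unique by exterior uniqueness for Helmholtz---and matching coefficients on $\partial D_\epsilon$ (where $k|\bs{x}-\bs{y}_\epsilon|=2\pi\epsilon$ and $H_n^{(1)}(2\pi\epsilon)\neq 0$ for all $n$) yields the stated closed form.

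For the norm estimates, I would analyze each term using the small-argument asymptotics at $z=2\pi\epsilon$, namely $J_0(z)/H_0^{(1)}(z)=\OO(|\log\epsilon|^{-1})$ coming from the logarithmic singularity of $H_0^{(1)}$, and $J_n(z)/H_n^{(1)}(z)=\OO(\epsilon^{2|n|})$ for $|n|\geq 1$, combined with large-argument Hankel asymptotics. Since $q>p$, the distances $|\bs{y}_\epsilon-\bs{z}_\epsilon|$ and, for $\bs{x}\in B$ or $\bs{x}$ near $\partial D$, $|\bs{x}-\bs{y}_\epsilon|$ are bounded below by positive multiples of $\epsilon^{-q}$ and $\epsilon^{-p}$ respectively; the bounds from \cref{lem:appendix-large-x} then give $|H_n^{(1)}(k|\bs{y}_\epsilon-\bs{z}_\epsilon|)|=\OO(\epsilon^{q/2})$ and $|H_n^{(1)}(k|\bs{x}-\bs{y}_\epsilon|)|=\OO(\epsilon^{p/2})$ uniformly on $B$ and on $\partial D$ for each fixed $n$. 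The $n=0$ mode therefore contributes exactly the claimed $|\log\epsilon|^{-1}\epsilon^{p/2}\epsilon^{q/2}$, while every higher mode carries an additional $\epsilon^{2|n|}$ factor.

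Finally, to upgrade the pointwise bound to the $H^1(B)$ and $H^{1/2}(\partial D)$ norms, I would observe that for small enough $\epsilon$, $v^i_\epsilon$ is a classical solution of the homogeneous Helmholtz equation on a fixed open neighborhood $V$ of $B\cup D$ that is disjoint from $\overline{D_\epsilon}$, so interior elliptic regularity together with the trace theorem transfer a sup-norm bound on $V$ to the desired estimates without worsening the $\epsilon$-dependence. The main technical obstacle is controlling the tail of the series uniformly in $\epsilon$: for $|n|$ in any bounded range the termwise bounds above are immediate, but for large $|n|$ one has to show that the factorial growth in $n$ of $H_n^{(1)}(t)$ at a fixed argument is dominated by the rapid decay $\OO(\epsilon^{2|n|})$ of $J_n(2\pi\epsilon)/H_n^{(1)}(2\pi\epsilon)$. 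I would handle this by splitting the sum at $|n|=N$ for a suitable threshold (e.g., $N\asymp|\log\epsilon|$) and using uniform Bessel asymptotics on each piece to secure absolute convergence, leaving the $n=0$ term as the effective leading order.
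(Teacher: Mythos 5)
Your derivation of the series representation matches the paper's: Graf's addition theorem to expand the Dirichlet data $-\phi(\cdot,\bs{z}_\epsilon)$ in the angular harmonics about $\bs{y}_\epsilon$, a radiating multipole ansatz, and coefficient matching on $\partial D_\epsilon$. That part is fine.

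For the norm estimates, however, your route diverges from the paper's and contains a genuine gap. You propose to bound the series term by term, using $J_n(2\pi\epsilon)/H_n^{(1)}(2\pi\epsilon)=\OO(\epsilon^{2|n|})$ together with $|H_n^{(1)}(k|\bs{y}_\epsilon-\bs{z}_\epsilon|)|=\OO(\epsilon^{q/2})$ and $|H_n^{(1)}(k|\bs{x}-\bs{y}_\epsilon|)|=\OO(\epsilon^{p/2})$. But the latter two bounds come from the large-argument asymptotics (\cref{lem:appendix-large-x}), which hold only for \emph{fixed} $n$ --- the paper explicitly notes these estimates are not uniform in $n$, and indeed $|H_n^{(1)}(x)|$ at fixed $x$ grows roughly like $(2n/(ex))^{n}$ once $n$ exceeds $x$. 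Since the arguments here are $\OO(\epsilon^{-q})$ and $\OO(\epsilon^{-p})$, the regime where order and argument are comparable occurs for $|n|$ up to $\OO(\epsilon^{-q})$, so your proposed cutoff $N\asymp|\log\epsilon|$ does not isolate the difficulty: for $|\log\epsilon|\lesssim |n|\lesssim \epsilon^{-p}$ you have neither a fixed-$n$ large-argument expansion nor a large-order expansion at your disposal, and the constants hidden in the $\OO$'s of \cref{lem:appendix-large-x} are allowed to blow up with $n$. Making this work requires uniform (Debye/Olver-type) asymptotics in the transition region, which the paper's appendix does not supply. This is exactly the obstruction the authors flag --- they state that estimating $v^i_\epsilon$ from the series ``is quite challenging due to the lack of uniform estimates for the terms of the series'' --- and they avoid it entirely by writing $v^i_\epsilon=-\mathscr{S}_\epsilon S_\epsilon^{-1}\phi(\cdot,\bs{z}_\epsilon)$ in $B$ and $v^i_\epsilon=-T_\epsilon S_\epsilon^{-1}\phi(\cdot,\bs{z}_\epsilon)$ on $\partial D$, then combining the operator bounds $\Vert\mathscr{S}_\epsilon\Vert,\Vert T_\epsilon\Vert\leq C\epsilon^{p/2}$ of \cref{lem:appendix-ext} with the refined bound $\Vert S_\epsilon^{-1}\phi(\cdot,\bs{z}_\epsilon)\Vert_{H^{-1/2}(\partial D_\epsilon)}\leq C|\log\epsilon|^{-1}\epsilon^{q/2}+\OO(\epsilon^{q/2+1})$ from \cref{thm:appendix-int} with $r=q/2$; there the only mode-by-mode analysis needed is for the single operator $\widehat{S}_\epsilon^{-1}$ on the fixed unit circle, where the argument $2\pi\epsilon$ is small and the large-order estimates of \cref{lem:appendix-large-n} are genuinely uniform. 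Your final paragraph (interior regularity plus traces to pass from sup-norm to $H^1(B)$ and $H^{1/2}(\partial D)$) is sound, but it rests on the pointwise bound you have not actually established. To repair your argument you would either need to import uniform Bessel asymptotics from the literature, or switch to the paper's integral-operator factorization.
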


\begin{proof}
We look for a solution of the form
\begin{align}
v^i_\epsilon(\bs{x},\bs{y}_\epsilon,\bs{z}_\epsilon) = \frac{1}{\sqrt{2\pi}}\sum_{n=-\infty}^{+\infty}\frac{c_n(\epsilon)}{H_n^{(1)}(2\pi\epsilon)}H_n^{(1)}(k\vert\bs{x}-\bs{y}_\epsilon\vert)e^{in\theta},
\end{align}
with Fourier coefficients
\begin{align}
c_n(\epsilon) = \frac{1}{2\pi}\int_0^{2\pi} v^i_\epsilon(\bs{y}_\epsilon+\lambda\epsilon e^{i\theta},\bs{y}_\epsilon,\bs{z}_\epsilon)e^{-in\theta}d\theta.
\end{align}
We must rewrite the right-hand side of the boundary condition in equation \cref{eq:v^i}. Utilizing Graf's addition theorem (see, e.g., \cite[eq.~(9.1.79)]{abramowitz1964} or \cite[eq.~(5.12.11)]{lebedev1972}), we obtain
\begin{align}
H_0^{(1)}(k\vert\bs{x}-\bs{z}_\epsilon\vert) = \sum_{n=-\infty}^{+\infty}H_n^{(1)}(k\vert\bs{y}_\epsilon-\bs{z}_\epsilon\vert)J_n(k\vert\bs{x}-\bs{y}_\epsilon\vert)e^{-in(\mu_\epsilon+\pi)}e^{in\theta}.
\end{align}
Utilizing the boundary condition yields the desired result.

Although a closed-form formula exists for $v^i_\epsilon$, estimating it within $B$ and on $\partial D$ using this formula is quite challenging due to the lack of uniform estimates for the terms of the series. Instead, we write $v^i_\epsilon(\cdot,\bs{y}_\epsilon,\bs{z}_\epsilon)=S_\epsilon g_\epsilon(\cdot,\bs{z}_\epsilon)$ on $\partial D_\epsilon$ with the single layer operator
\begin{align}
& S_\epsilon:H^{-1/2}(\partial D_\epsilon)\to H^{1/2}(\partial D_\epsilon), \\
& S_\epsilon g(\bs{x}) = \int_{\partial D_\epsilon}\phi(\bs{x},\bs{y})g(\bs{y})ds(\bs{y}), \quad \bs{x}\in \partial D_\epsilon. \nonumber
\end{align}
The solution in $B$ then reads $v^i_\epsilon(\cdot,\bs{y}_\epsilon,\bs{z}_\epsilon)=\mathscr{S}_\epsilon g_\epsilon(\cdot,\bs{z}_\epsilon)=-\mathscr{S}_\epsilon S_\epsilon^{-1}\phi(\cdot,\bs{z}_\epsilon)$ with
\begin{align}
& \mathscr{S}_\epsilon:H^{-1/2}(\partial D_\epsilon)\to H^1(B), \\
& \mathscr{S}_\epsilon g(\bs{x}) = \int_{\partial D_\epsilon}\phi(\bs{x},\bs{y})g(\bs{y})ds(\bs{y}), \quad \bs{x}\in B. \nonumber
\end{align}
Using the estimate for $\mathscr{S}_\epsilon$ (see \cref{lem:appendix-ext}) and applying \cref{thm:appendix-int} to $f(\cdot,\bs{z}_\epsilon)=\phi(\cdot,\bs{z}_\epsilon)$ with $r=q/2$, we obtain, for small enough $\epsilon$,
\begin{align}
\Vert v^i_\epsilon(\cdot,\bs{y}_\epsilon,\bs{z}_\epsilon)\Vert_{H^1(B)} \leq \Vert\mathscr{S}_\epsilon\Vert\,\Vert S_\epsilon^{-1}\phi(\cdot,\bs{z}_\epsilon)\Vert_{H^{-1/2}(\partial D_\epsilon)} \leq C\vert\log\epsilon\vert^{-1}\epsilon^{p/2}\epsilon^{q/2},
\end{align}
with a constant $C$ independent of $\epsilon$.

For the second inequality, the solution reads $v^i_\epsilon(\cdot,\bs{y}_\epsilon,\bs{z}_\epsilon)=-T_\epsilon S_\epsilon^{-1}\phi(\cdot,\bs{z}_\epsilon)$ on $\partial D$ with
\begin{align}
& T_\epsilon:H^{-1/2}(\partial D_\epsilon)\to H^{1/2}(\partial D), \\
& T_\epsilon g(\bs{x}) = \int_{\partial D_\epsilon}\phi(\bs{x},\bs{y})g(\bs{y})ds(\bs{y}), \quad \bs{x}\in \partial D. \nonumber
\end{align}
Using the estimate for $T_\epsilon$ in \cref{lem:appendix-ext}, we arrive at
\begin{align}
\Vert v^i_\epsilon(\cdot,\bs{y}_\epsilon,\bs{z}_\epsilon)\Vert_{H^{1/2}(\partial D)} \leq \Vert T_\epsilon\Vert\,\Vert S_\epsilon^{-1}\phi(\cdot,\bs{z}_\epsilon)\Vert_{H^{-1/2}(\partial D_\epsilon)} \leq C\vert\log\epsilon\vert^{-1}\epsilon^{p/2}\epsilon^{q/2}.
\end{align}
\end{proof}

From \cref{lem:v^i}, we understand that the small norm of $v^i_\epsilon(\cdot,\bs{y}_\epsilon,\bs{z}_\epsilon)$ can be explained by the decay of the wave field transmitted by the point source over a distance $d=\OO(\epsilon^{-q})$ to reach $D_\epsilon$---the wave field experiences a decay in amplitude proportional to $1/\sqrt{d}$. Subsequently, during the evaluation in the vicinity of $D$ within $B$ and on $\partial D$, some signal amplitude is lost as the wave travels from $D_\epsilon$ to $D$, covering a distance of $\OO(\epsilon^{-p})$, introducing an additional decaying term of $\OO(\epsilon^{p/2})$. Lastly, the presence of the logarithmic term is a consequence of $D_\epsilon$ having a radius of $\OO(\epsilon)$.

The final step preceding the proof of our main theorem involves deriving estimates for $v^s_\epsilon(\cdot,\bs{y}_\epsilon,\bs{z}_\epsilon)$, representing the scattering of $v^i_\epsilon(\cdot,\bs{y}_\epsilon,\bs{z}_\epsilon)$ by $D$.

\begin{lemma}[Estimates on $v^s_\epsilon$]\label{lem:v^s}
There exists a constant $C>0$, independent of $\epsilon$, such that for small enough $\epsilon$,
\begin{align}
\Vert v^s_\epsilon(\cdot,\bs{y}_\epsilon,\bs{z}_\epsilon)\Vert_{H^1(B)} \leq C \vert\log\epsilon\vert^{-1}\epsilon^{p/2}\epsilon^{q/2}
\end{align}
and
\begin{align}
\Vert v^s_\epsilon(\cdot,\bs{y}_\epsilon,\bs{z}_\epsilon)\Vert_{H^{1/2}(\partial D_\epsilon)} \leq C \vert\log\epsilon\vert^{-1}\epsilon^p\epsilon^{q/2}.
\end{align}
\end{lemma}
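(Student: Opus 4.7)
The plan is to imitate the proof of \cref{lem:u^s} almost verbatim, since \cref{eq:v^s} is a sound-soft scattering problem by $D$ with boundary data $-v_\epsilon^i(\cdot,\bs{y}_\epsilon,\bs{z}_\epsilon)$ in place of $-\phi(\cdot,\bs{z}_\epsilon)$, and we already control this new boundary datum in $H^{1/2}(\partial D)$ via \cref{lem:v^i}. Concretely, I would write $v^s_\epsilon(\cdot,\bs{y}_\epsilon,\bs{z}_\epsilon) = \mathscr{S} h_\epsilon$ in $B$ and on $\partial D_\epsilon$, where the density $h_\epsilon\in H^{-1/2}(\partial D)$ is the unique solution of $S h_\epsilon = -v_\epsilon^i(\cdot,\bs{y}_\epsilon,\bs{z}_\epsilon)$ on $\partial D$ (invertibility of $S$ is guaranteed since $k^2$ is not a Dirichlet eigenvalue of $-\Delta$ in $D$, exactly as in \cref{lem:u^s}).

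For the first inequality, I would use the continuity of $\mathscr{S}:H^{-1/2}(\partial D)\to H^1(B)$ and $S^{-1}:H^{1/2}(\partial D)\to H^{-1/2}(\partial D)$, together with the $H^{1/2}(\partial D)$-bound on $v_\epsilon^i$ already proved in \cref{lem:v^i}, to get
\begin{align*}
\Vert v^s_\epsilon(\cdot,\bs{y}_\epsilon,\bs{z}_\epsilon)\Vert_{H^1(B)} \leq \Vert\mathscr{S}\Vert\,\Vert S^{-1}\Vert\,\Vert v^i_\epsilon(\cdot,\bs{y}_\epsilon,\bs{z}_\epsilon)\Vert_{H^{1/2}(\partial D)} \leq C\,\vert\log\epsilon\vert^{-1}\epsilon^{p/2}\epsilon^{q/2}.
\end{align*}

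For the second inequality, I would evaluate the same single layer representation on $\partial D_\epsilon$, so that $v^s_\epsilon(\cdot,\bs{y}_\epsilon,\bs{z}_\epsilon) = -T_\epsilon^T S^{-1} v_\epsilon^i(\cdot,\bs{y}_\epsilon,\bs{z}_\epsilon)$ on $\partial D_\epsilon$, with the same operator $T_\epsilon^T:H^{-1/2}(\partial D)\to H^{1/2}(\partial D_\epsilon)$ used in the proof of \cref{lem:u^s}. Invoking the appendix bound $\Vert T_\epsilon^T\Vert = \OO(\epsilon^{p/2})$ and again the $H^{1/2}(\partial D)$-estimate on $v^i_\epsilon$ from \cref{lem:v^i} yields
\begin{align*}
\Vert v^s_\epsilon(\cdot,\bs{y}_\epsilon,\bs{z}_\epsilon)\Vert_{H^{1/2}(\partial D_\epsilon)} \leq \Vert T_\epsilon^T\Vert\,\Vert S^{-1}\Vert\,\Vert v^i_\epsilon(\cdot,\bs{y}_\epsilon,\bs{z}_\epsilon)\Vert_{H^{1/2}(\partial D)} \leq C\,\vert\log\epsilon\vert^{-1}\epsilon^p\epsilon^{q/2},
\end{align*}
which is exactly the second claimed bound. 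No genuine obstacle is anticipated: all the analytical work (mapping properties of $\mathscr{S}$, $S^{-1}$, $T_\epsilon^T$ in the appendix, and the hard estimate on the incident-type field $v_\epsilon^i$ with its logarithmic factor) has already been carried out; the only thing to check is that the same formal manipulations used for $u^s$ apply here with $-\phi(\cdot,\bs{z}_\epsilon)$ replaced by $-v_\epsilon^i(\cdot,\bs{y}_\epsilon,\bs{z}_\epsilon)$, which requires nothing beyond the well-posedness of \cref{eq:v^s} and the continuity of the single-layer operators listed above.
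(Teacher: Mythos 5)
Your proposal is correct and follows essentially the same route as the paper: the paper also obtains the $H^1(B)$ bound from the continuity of the boundary-to-solution map applied to the $H^{1/2}(\partial D)$ estimate on $v^i_\epsilon$ from \cref{lem:v^i} (you merely make this explicit via $\mathscr{S}S^{-1}$, which is how the paper itself argues for $u^s$), and it proves the second bound with exactly your representation $v^s_\epsilon=-T_\epsilon^TS^{-1}v^i_\epsilon$ on $\partial D_\epsilon$ together with $\Vert T_\epsilon^T\Vert=\OO(\epsilon^{p/2})$. No gaps.
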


\begin{proof}
The first inequality follows from
\begin{align}
\Vert v^s_\epsilon(\cdot,\bs{y}_\epsilon,\bs{z}_\epsilon)\Vert_{H^1(B)} \leq C\Vert v^i_\epsilon(\cdot,\bs{y}_\epsilon,\bs{z}_\epsilon)\Vert_{H^{1/2}(\partial D)},
\end{align}
a standard result of functional analysis (see, e.g., \cite[Chap.~6]{evans2010}). For the second inequality, we write $v^s_\epsilon(\cdot,\bs{y}_\epsilon,\bs{z}_\epsilon)=-T_\epsilon^TS^{-1}v^i_\epsilon(\cdot,\bs{y}_\epsilon,\bs{z}_\epsilon)$ on $\partial D_\epsilon$, and then
\begin{align}
\Vert v^s_\epsilon(\cdot,\bs{y}_\epsilon,\bs{z}_\epsilon)\Vert_{H^{1/2}(\partial D_\epsilon)} \leq \Vert T_\epsilon^T\Vert\,\Vert S^{-1}\Vert\,\Vert v^i_\epsilon(\cdot,\bs{y}_\epsilon,\bs{z}_\epsilon)\Vert_{H^{1/2}(\partial D)} \leq C\vert\log\epsilon\vert^{-1}\epsilon^p\epsilon^{q/2}.
\end{align}
\end{proof}

The amplitude of $v^s_\epsilon(\cdot,\bs{y}_\epsilon,\bs{z}_\epsilon)$ in $B$ is the same as that of $v^i_\epsilon(\cdot,\bs{y}_\epsilon,\bs{z}_\epsilon)$. However, on $\partial D_\epsilon$, because of the distance $\OO(\epsilon^{-p})$, we get an additional decaying factor of $\OO(\epsilon^{p/2})$.

Let $e_\epsilon(\cdot,\bs{y}_\epsilon,\bs{z}_\epsilon)\in H^1_{\mrm{loc}}(\R^2\setminus\{D\cup D_\epsilon\})$ be the error
\begin{align}\label{eq:e}
e_\epsilon(\cdot,\bs{y}_\epsilon,\bs{z}_\epsilon) = w^s_\epsilon(\cdot,\bs{y}_\epsilon,\bs{z}_\epsilon) - u^s(\cdot,\bs{z}_\epsilon) - v^i_\epsilon(\cdot,\bs{y}_\epsilon,\bs{z}_\epsilon) - v^s_\epsilon(\cdot,\bs{y}_\epsilon,\bs{z}_\epsilon).
\end{align}
It solves the scattering problem
\begin{align}
\left\{
\begin{array}{ll}
\Delta e_\epsilon(\cdot,\bs{y}_\epsilon,\bs{z}_\epsilon) + k^2 e_\epsilon(\cdot,\bs{y}_\epsilon,\bs{z}_\epsilon) = 0 \quad \text{in $\R^2\setminus\{\overline{D\cup D_\epsilon}\}$}, \\[0.4em]
e_\epsilon(\cdot,\bs{y}_\epsilon,\bs{z}_\epsilon) = 0 \quad \text{on $\partial D$}, \\[0.4em]
e_\epsilon(\cdot,\bs{y}_\epsilon,\bs{z}_\epsilon) = f_\epsilon(\cdot,\bs{y}_\epsilon,\bs{z}_\epsilon) \quad \text{on $\partial D_\epsilon$}, \\[0.4em]
\text{$e_\epsilon(\cdot,\bs{y}_\epsilon,\bs{z}_\epsilon)$ is radiating},
\end{array}
\right.
\end{align}
with $f_\epsilon(\cdot,\bs{y}_\epsilon,\bs{z}_\epsilon) = -u^s(\cdot,\bs{z}_\epsilon) - v^s_\epsilon(\cdot,\bs{y}_\epsilon,\bs{z}_\epsilon)$. We are now ready to prove our main result.

\begin{theorem}[Estimates on $e_\epsilon$]\label{thm:e}
There exists a constant $C>0$, independent of $\epsilon$, such that for small enough $\epsilon$,
\begin{align}
\Vert e_\epsilon(\cdot,\bs{y}_\epsilon,\bs{z}_\epsilon)\Vert_{H^1(B)} \leq C \vert\log\epsilon\vert^{-1}\epsilon^{p}\epsilon^{q/2}.
\end{align}
\end{theorem}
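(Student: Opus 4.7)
The problem satisfied by $e_\epsilon$ is a two-obstacle sound-soft scattering problem with vanishing data on $\partial D$ and data $f_\epsilon=-u^s(\cdot,\bs{z}_\epsilon)-v^s_\epsilon(\cdot,\bs{y}_\epsilon,\bs{z}_\epsilon)$ on $\partial D_\epsilon$. By \cref{lem:u^s} and \cref{lem:v^s}, the dominant contribution on $\partial D_\epsilon$ comes from $u^s$ and
\begin{align*}
\Vert f_\epsilon\Vert_{H^{1/2}(\partial D_\epsilon)}\leq C\epsilon^{p/2}\epsilon^{q/2}.
\end{align*}
The plan is to reproduce the boundary integral equation arguments of the previous three lemmas, with $f_\epsilon$ playing the role of the incident field on $\partial D_\epsilon$, and to iterate the interaction between $D$ and $D_\epsilon$ so as to absorb the residual trace left on $\partial D$.

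Concretely, I would decompose $e_\epsilon=\sum_{k\geq 1}(e^{i,(k)}_\epsilon+e^{s,(k)}_\epsilon)$, where $e^{i,(k)}_\epsilon$ is the sound-soft scattered field by $D_\epsilon$ alone driven by the residual Dirichlet data on $\partial D_\epsilon$ inherited from step $k-1$ (with $f_\epsilon$ at step $k=1$), and $e^{s,(k)}_\epsilon$ is the sound-soft scattered field by $D$ alone correcting the trace that $e^{i,(k)}_\epsilon$ leaves on $\partial D$. At the first iteration, I write $e^{i,(1)}_\epsilon=-\mathscr{S}_\epsilon S_\epsilon^{-1}f_\epsilon$ in $B$ and mimic the proof of \cref{lem:v^i}: since $u^s(\cdot,\bs{z}_\epsilon)$ and $v^s_\epsilon(\cdot,\bs{y}_\epsilon,\bs{z}_\epsilon)$ are smooth Helmholtz solutions in a neighborhood of $\bs{y}_\epsilon$, \cref{thm:appendix-int} can be applied to each of them separately and yields the extra $\vert\log\epsilon\vert^{-1}$ gain. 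Combined with the $\partial D_\epsilon$-bounds already obtained in \cref{lem:u^s} and \cref{lem:v^s}, this gives
\begin{align*}
\Vert e^{i,(1)}_\epsilon\Vert_{H^1(B)}+\Vert e^{i,(1)}_\epsilon\Vert_{H^{1/2}(\partial D)}\leq C\vert\log\epsilon\vert^{-1}\epsilon^{p}\epsilon^{q/2}.
\end{align*}
The correction $e^{s,(1)}_\epsilon=-\mathscr{S}S^{-1}(e^{i,(1)}_\epsilon\vert_{\partial D})$ inherits the same bound in $H^1(B)$ through the continuity argument of \cref{lem:v^s}, and its trace on $\partial D_\epsilon$ is smaller by an additional factor $\OO(\epsilon^{p/2})$ thanks to the $T_\epsilon^T$ estimate of \cref{lem:appendix-ext}.

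Iterating, each further pair $e^{i,(k)}_\epsilon,e^{s,(k)}_\epsilon$ picks up a factor $\OO(\vert\log\epsilon\vert^{-1}\epsilon^{p})$ per round trip $\partial D_\epsilon\to\partial D\to\partial D_\epsilon$, so the series converges for small $\epsilon$ and is dominated by its first term, giving the claimed bound. The main obstacle is verifying that \cref{thm:appendix-int}, as stated in the appendix for the canonical family $\phi(\cdot,\bs{z}_\epsilon)$ with decay rate $r=q/2$, genuinely applies to the more general fields $u^s(\cdot,\bs{z}_\epsilon)$ and $v^s_\epsilon(\cdot,\bs{y}_\epsilon,\bs{z}_\epsilon)$ on $\partial D_\epsilon$---that is, that their restrictions to the small disk $\partial D_\epsilon$ are quantitatively ``nearly constant'' in the precise sense required by the theorem, with the appropriate values of $r$. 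Once this is secured, everything else reduces to operator-norm bookkeeping already carried out in the preceding lemmas.
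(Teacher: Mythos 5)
Your proposal is correct and follows essentially the same route as the paper: the leading contribution is the single scattering of $f_\epsilon=-u^s-v^s_\epsilon$ by $D_\epsilon$ alone, estimated via \cref{thm:appendix-int} and \cref{lem:appendix-ext} with $r=p/2+q/2$ (exploiting, exactly as you flag, that $f_\epsilon$ is a smooth Helmholtz solution near $\bs{y}_\epsilon$ and hence nearly constant on $\partial D_\epsilon$, which is what produces the $\vert\log\epsilon\vert^{-1}$ gain), while the remaining $D\leftrightarrow D_\epsilon$ interactions are controlled by a contraction argument. The only difference is cosmetic: you unroll the alternating Born series explicitly, whereas the paper packages it into \cref{thm:appendix-ext} through $(I-A_\epsilon)^{-1}$ with $\Vert A_\epsilon\Vert\leq C\epsilon^{p}$; note also that each round trip only needs to contract by $\OO(\epsilon^{p})$, not by the $\OO(\vert\log\epsilon\vert^{-1}\epsilon^{p})$ you claim, but this over-statement is harmless to the conclusion.
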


\begin{proof}
Consider the solution $u_\epsilon(\cdot,\bs{y}_\epsilon,\bs{z}_\epsilon)\in H^1_{\mrm{loc}}(\R^2\setminus D_\epsilon)$ to
\begin{align}
\left\{
\begin{array}{ll}
\Delta u_\epsilon(\cdot,\bs{y}_\epsilon,\bs{z}_\epsilon) + k^2 u_\epsilon(\cdot,\bs{y}_\epsilon,\bs{z}_\epsilon) = 0 \quad \text{in $\R^2\setminus\overline{D_\epsilon}$}, \\[0.4em]
u_\epsilon(\cdot,\bs{y}_\epsilon,\bs{z}_\epsilon) = f_\epsilon(\cdot,\bs{y}_\epsilon,\bs{z}_\epsilon) \quad \text{on $\partial D_\epsilon$}, \\[0.4em]
\text{$u_\epsilon(\cdot,\bs{y}_\epsilon,\bs{z}_\epsilon)$ is radiating}.
\end{array}
\right.
\end{align}
Then, $w_\epsilon = e_\epsilon - u_\epsilon\in H^1_{\mrm{loc}}(\R^2\setminus\{D\cup D_\epsilon\})$ satisfies
\begin{align}
\left\{
\begin{array}{ll}
\Delta w_\epsilon(\cdot,\bs{y}_\epsilon,\bs{z}_\epsilon) + k^2 w_\epsilon(\cdot,\bs{y}_\epsilon,\bs{z}_\epsilon) = 0 \quad \text{in $\R^2\setminus\{\overline{D\cup D_\epsilon}\}$}, \\[0.4em]
w_\epsilon(\cdot,\bs{y}_\epsilon,\bs{z}_\epsilon) = -u_\epsilon(\cdot,\bs{y}_\epsilon,\bs{z}_\epsilon) \quad \text{on $\partial D$}, \\[0.4em]
w_\epsilon(\cdot,\bs{y}_\epsilon,\bs{z}_\epsilon) = 0 \quad \text{on $\partial D_\epsilon$}, \\[0.4em]
\text{$w_\epsilon(\cdot,\bs{y}_\epsilon,\bs{z}_\epsilon)$ is radiating}.
\end{array}
\right.
\end{align}
Utilizing \cref{thm:appendix-ext}, there exists a constant $C>0$, independent of $\epsilon$, such that for small enough $\epsilon$,
\begin{align}
\Vert w_\epsilon(\cdot,\bs{y}_\epsilon,\bs{z}_\epsilon)\Vert_{H^1(B)} \leq C \Vert u_\epsilon(\cdot,\bs{y}_\epsilon,\bs{z}_\epsilon)\Vert_{H^{1/2}(\partial D)}.
\end{align}
This yields
\begin{align}
\Vert e_\epsilon(\cdot,\bs{y}_\epsilon,\bs{z}_\epsilon)\Vert_{H^1(B)} \leq \Vert u_\epsilon(\cdot,\bs{y}_\epsilon,\bs{z}_\epsilon)\Vert_{H^1(B)} + C \Vert u_\epsilon(\cdot,\bs{y}_\epsilon,\bs{z}_\epsilon)\Vert_{H^{1/2}(\partial D)}.
\end{align}
We write $u_\epsilon(\cdot,\bs{y}_\epsilon,\bs{z}_\epsilon)=-\mathscr{S}_\epsilon S_\epsilon^{-1}f_\epsilon(\cdot,\bs{y}_\epsilon,\bs{z}_\epsilon)$ in $B$ and $u_\epsilon(\cdot,\bs{y}_\epsilon,\bs{z}_\epsilon)=-T_\epsilon S_\epsilon^{-1}f_\epsilon(\cdot,\bs{y}_\epsilon,\bs{z}_\epsilon)$ on $\partial D$ with $f_\epsilon(\cdot,\bs{y}_\epsilon,\bs{z}_\epsilon) = -u^s(\cdot,\bs{z}_\epsilon) - v^s_\epsilon(\cdot,\bs{y}_\epsilon,\bs{z}_\epsilon)$. Using the estimates for $\mathscr{S}_\epsilon$ and $T_\epsilon$ (see \cref{lem:appendix-ext}) and applying \cref{thm:appendix-int} to $f_\epsilon(\cdot,\bs{y}_\epsilon,\bs{z}_\epsilon) $ with $r=p/2+q/2$, we obtain, for small enough $\epsilon$,
\begin{align}
\Vert u_\epsilon(\cdot,\bs{y}_\epsilon,\bs{z}_\epsilon)\Vert_{H^1(B)} \leq \Vert\mathscr{S}_\epsilon\Vert\,\Vert S_\epsilon^{-1}f_\epsilon(\cdot,\bs{y}_\epsilon,\bs{z}_\epsilon)\Vert_{H^{-1/2}(\partial D_\epsilon)} \leq C\vert\log\epsilon\vert^{-1}\epsilon^p\epsilon^{q/2}
\end{align}
and 
\begin{align}
\Vert u_\epsilon(\cdot,\bs{y}_\epsilon,\bs{z}_\epsilon)\Vert_{H^{1/2}(\partial D)} \leq \Vert T_\epsilon\Vert\,\Vert S_\epsilon^{-1}f_\epsilon(\cdot,\bs{y}_\epsilon,\bs{z}_\epsilon)\Vert_{H^{-1/2}(\partial D_\epsilon)} \leq C\vert\log\epsilon\vert^{-1}\epsilon^p\epsilon^{q/2},
\end{align}
with a constant $C$ independent of $\epsilon$.
\end{proof}

We summarize our asymptotic results in \cref{tab:asymptotics}. We note, in particular, that the error, as expressed in \cref{eq:e}, is $\OO(\vert\log\epsilon\vert^{-1}\epsilon^p\epsilon^{q/2})$ in the $H^1(B)$-norm, according to \cref{thm:e}. This error comes from the scattering sequence $\phi(\cdot,\bs{z}_\epsilon) \rightarrow D \rightarrow u^s \rightarrow D_\epsilon \rightarrow e_\epsilon\rightarrow e_\epsilon|_B$, which generates a field smaller than the sequence $\phi(\cdot,\bs{z}_\epsilon) \rightarrow D_\epsilon \rightarrow v^i_\epsilon \rightarrow D \rightarrow v^s_\epsilon \rightarrow v^s_\epsilon|_B$. We also illustrate our asymptotic model in \cref{fig:asymptotics}.

\begin{table}
\caption{\textit{The amplitude of the scattered fields $u^s$, $v^i_\epsilon$, $v^s_\epsilon$ and $e_\epsilon$, described by the equations \cref{eq:u^s}--\cref{eq:v^s} and \cref{eq:e}, depends on $\epsilon$. The estimates are proved in \cref{lem:u^s}, \cref{lem:v^i}, \cref{lem:v^s}, and \cref{thm:e}.}}
\centering
\ra{1.3}
\begin{tabular}{cc}
\toprule
Scattering sequence & $H^1(B)$-norm \\
\midrule
$\phi(\cdot,\bs{z}_\epsilon) \underset{\text{incident}}{\longrightarrow} D \underset{\text{scattering}}{\longrightarrow} u^s \underset{\text{evaluation}}{\longrightarrow} u^s|_B$ & $\OO(\epsilon^{q/2})$ \\
$\phi(\cdot,\bs{z}_\epsilon) \underset{\text{incident}}{\longrightarrow} D_\epsilon \underset{\text{scattering}}{\longrightarrow} v^i_\epsilon \underset{\text{evaluation}}{\longrightarrow} v^i_\epsilon|_B$ & $\OO(\vert\log\epsilon\vert^{-1}\epsilon^{p/2}\epsilon^{q/2})$ \\
$\phi(\cdot,\bs{z}_\epsilon) \underset{\text{incident}}{\longrightarrow} D_\epsilon \underset{\text{scattering}}{\longrightarrow} v^i_\epsilon \underset{\text{incident}}{\longrightarrow} D \underset{\text{scattering}}{\longrightarrow} v^s_\epsilon \underset{\text{evaluation}}{\longrightarrow} v^s_\epsilon|_B$ & $\OO(\vert\log\epsilon\vert^{-1}\epsilon^{p/2}\epsilon^{q/2})$ \\
$\phi(\cdot,\bs{z}_\epsilon) \underset{\text{incident}}{\longrightarrow} D \underset{\text{scattering}}{\longrightarrow} u^s \underset{\text{incident}}{\longrightarrow} D_\epsilon \underset{\text{scattering}}{\longrightarrow} e_\epsilon \underset{\text{evaluation}}{\longrightarrow} e_\epsilon|_B$ & $\OO(\vert\log\epsilon\vert^{-1}\epsilon^{p}\epsilon^{q/2})$ \\
\bottomrule
\end{tabular}
\label{tab:asymptotics}
\end{table}

\begin{figure}
\centering
\def\scl{0.2}
\begin{tabular}{cc}
\includegraphics[scale=\scl]{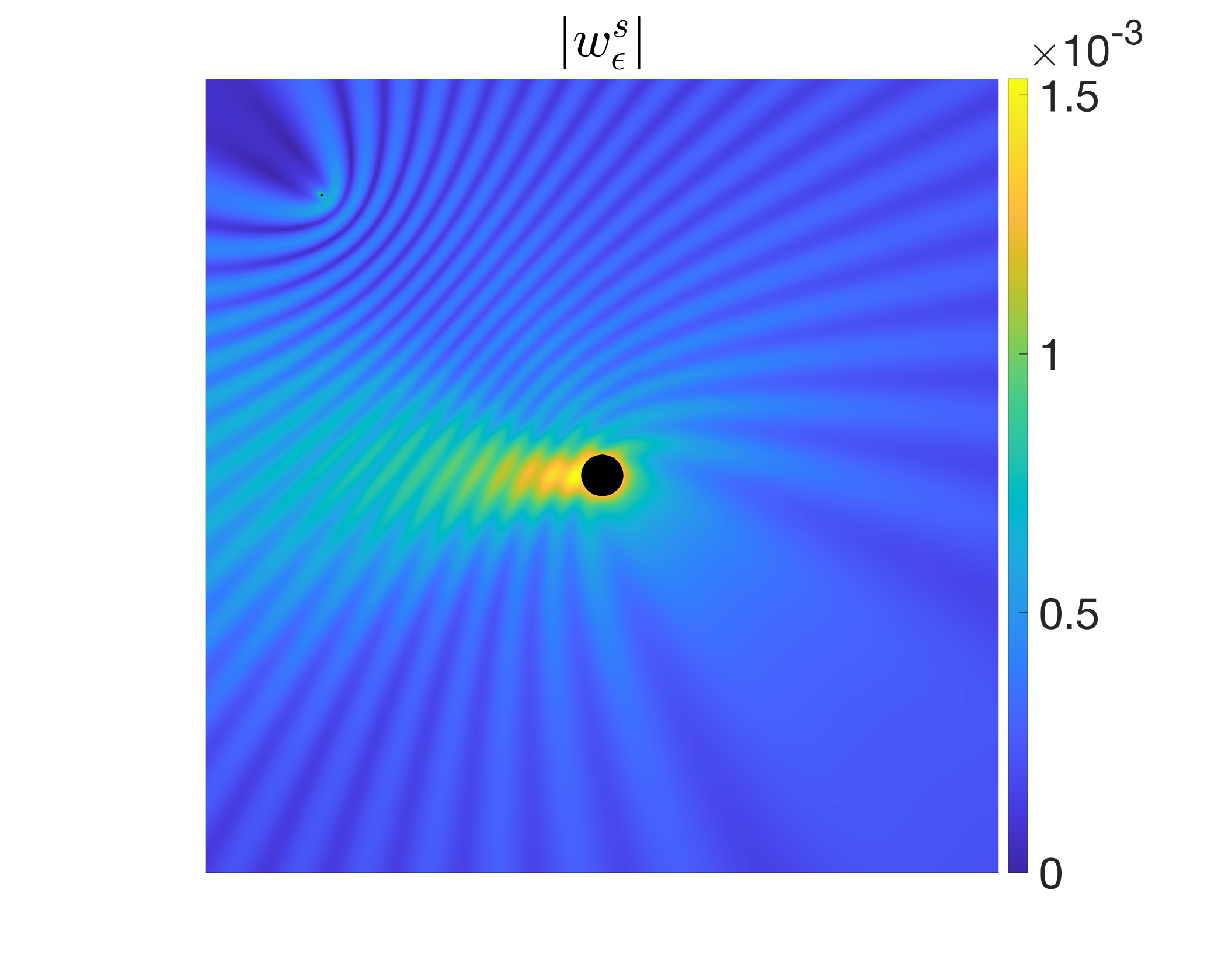} & \includegraphics[scale=\scl]{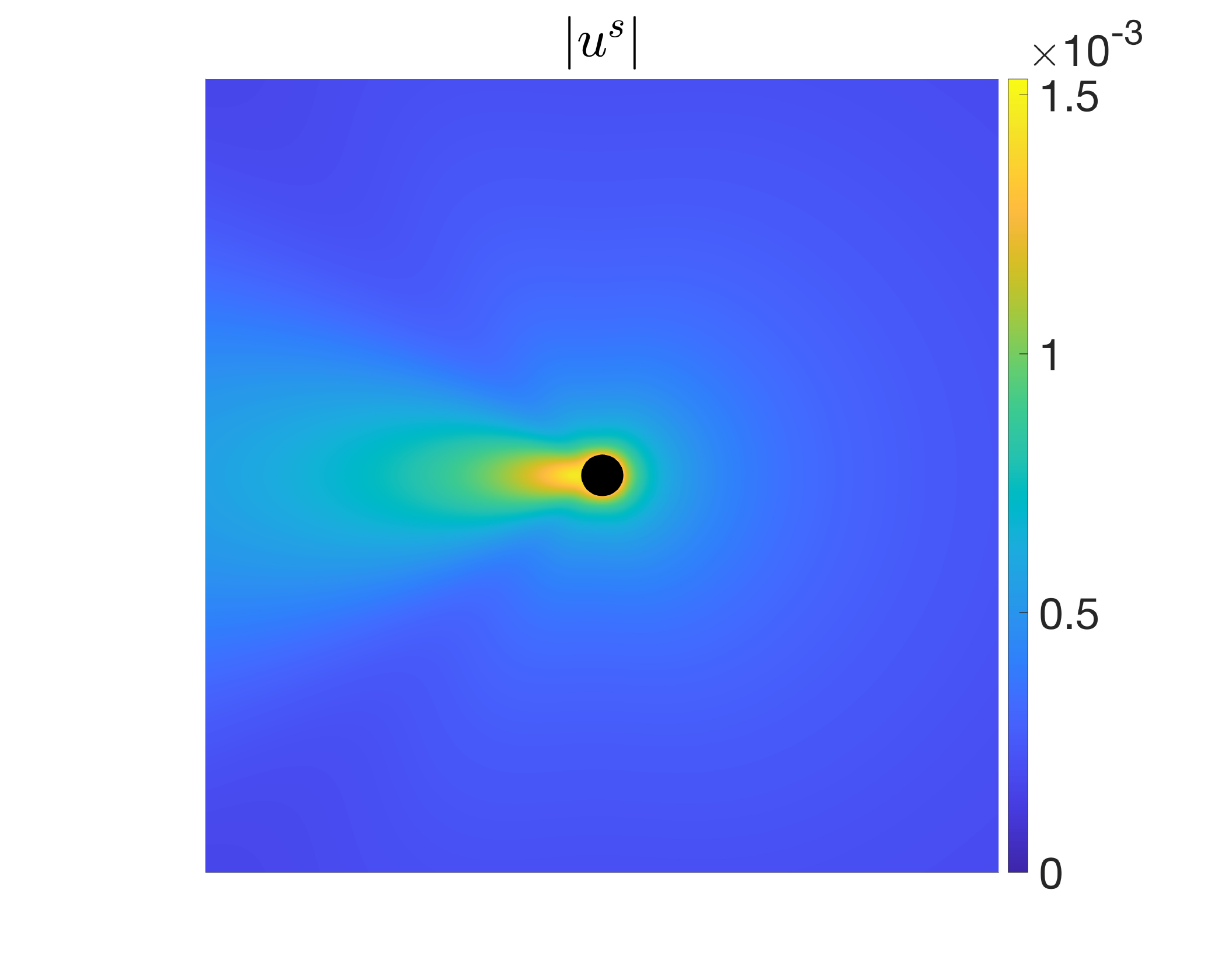} \\
\includegraphics[scale=\scl]{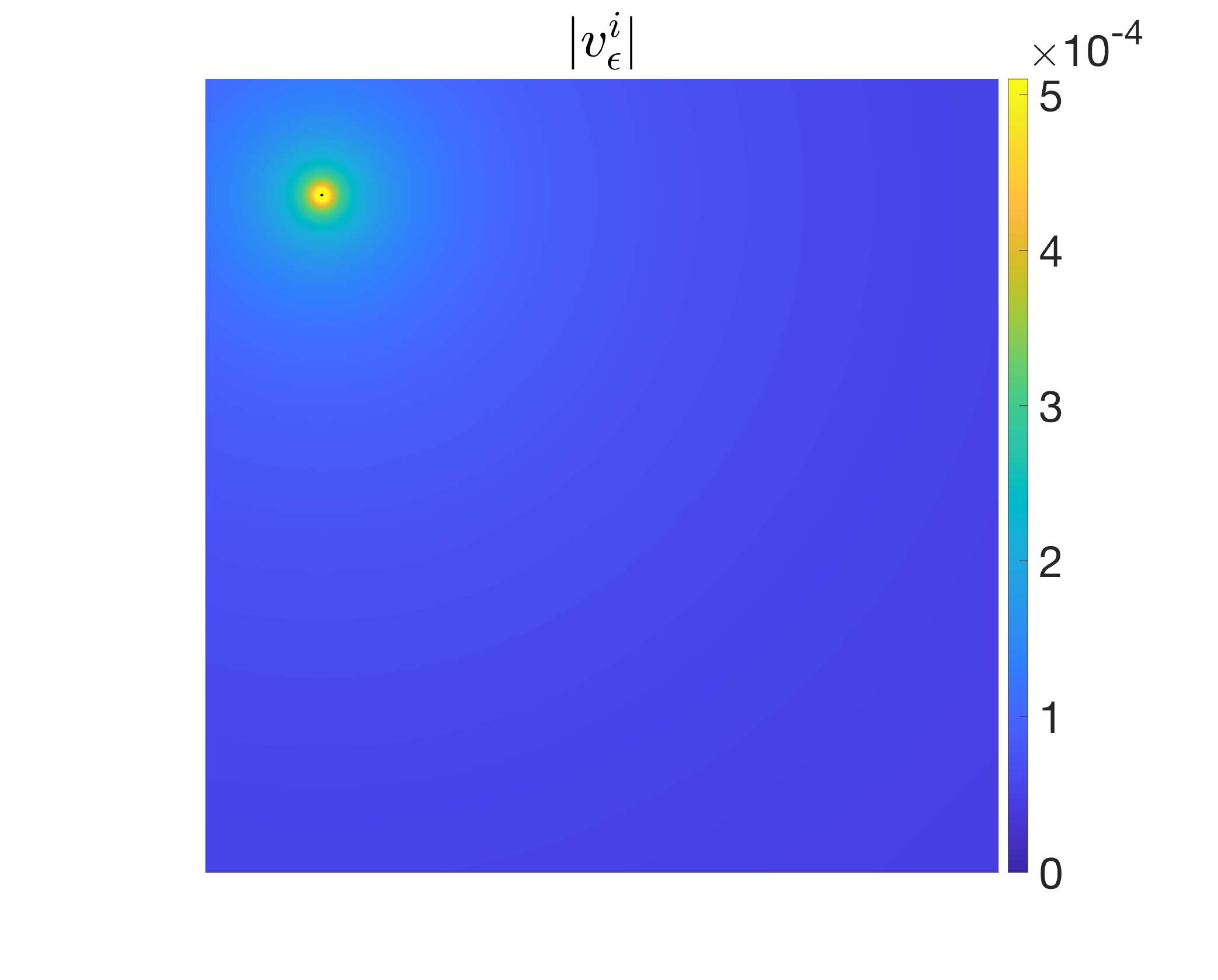} & \includegraphics[scale=\scl]{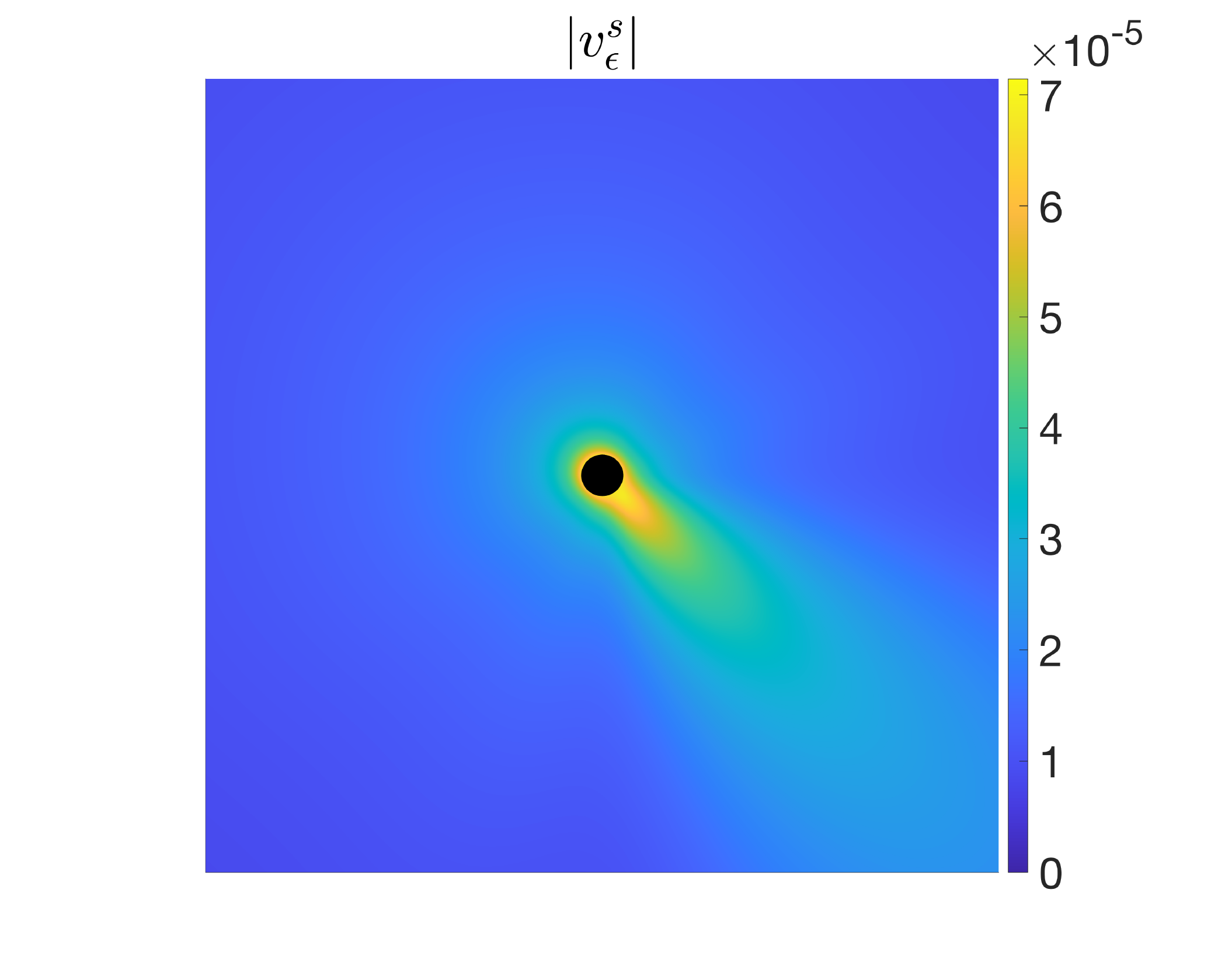}
\end{tabular}
\caption{The scattered field $w^s_\epsilon$ (top left) can be approximated with an error $\OO(\vert\log\epsilon\vert^{-1}\epsilon^p\epsilon^{q/2})$ by the sum of three terms: $u^s$ (top right), $v^i_\epsilon$ (bottom left), and $v^s_\epsilon$ (bottom right). The relevant signal for the LSM is stored in the total field $v_\epsilon=v^i_\epsilon+v^s_\epsilon$, which satisfies a modified Helmholtz--Kirchhoff identity (\cref{sec:mod-HK-identity}).}
\label{fig:asymptotics}
\end{figure}

Moving forward, we will utilize approximations for $v^i_\epsilon(\cdot,\bs{y}_\epsilon,\bs{z}_\epsilon)$ and $v^s_\epsilon(\cdot,\bs{y}_\epsilon,\bs{z}_\epsilon)$, based on the asymptotics of \cite{cassier2013}. For instance, $v^i_\epsilon(\cdot,\bs{y}_\epsilon,\bs{z}_\epsilon)$ can be approximated by the field 
\begin{align}\label{eq:v^it}
\tilde{v}^i_\epsilon(\cdot,\bs{y}_\epsilon,\bs{z}_\epsilon) = \mu_\epsilon(\bs{y}_\epsilon,\bs{z}_\epsilon)\phi(\cdot,\bs{y}_\epsilon)
\end{align}
transmitted by a point source located at $\bs{y}_\epsilon$ with amplitude
\begin{align}\label{eq:mu}
\mu_\epsilon(\bs{y}_\epsilon,\bs{z}_\epsilon)=-H_0^{(1)}(k\vert\bs{y}_\epsilon-\bs{z}_\epsilon\vert)/H_0^{(1)}(2\pi\epsilon).
\end{align}
(This is why we use the superscript ``$i$'' in $v^i_\epsilon$---it is an incident field for $D$.)
Similarly, let $\tilde{v}^s_\epsilon(\cdot,\bs{y}_\epsilon,\bs{z}_\epsilon)$ be the solution to $\cref{eq:v^s}$ with $v^i_\epsilon(\cdot,\bs{y}_\epsilon,\bs{z}_\epsilon)$ replaced by $\tilde{v}^i_\epsilon(\cdot,\bs{y}_\epsilon,\bs{z}_\epsilon)$. Then, 
\begin{align}\label{eq:v^st}
\tilde{v}^s_\epsilon(\cdot,\bs{y}_\epsilon,\bs{z}_\epsilon) = \mu_\epsilon(\bs{y}_\epsilon,\bs{z}_\epsilon)u^s(\cdot,\bs{y}_\epsilon),
\end{align} 
where $u^s(\cdot,\bs{y}_\epsilon)$ is the solution to the scattering problem \cref{eq:u^s} for the incident wave $\phi(\cdot,\bs{y}_\epsilon)$.

Let $\tilde{e}_\epsilon(\cdot,\bs{y}_\epsilon,\bs{z}_\epsilon)\in H^1_{\mrm{loc}}(\R^2\setminus\{D\cup\{\bs{y}_\epsilon\}\})$ be the error
\begin{align}\label{eq:et}
\tilde{e}_\epsilon(\cdot,\bs{y}_\epsilon,\bs{z}_\epsilon) = w^s_\epsilon(\cdot,\bs{y}_\epsilon,\bs{z}_\epsilon) - u^s(\cdot,\bs{z}_\epsilon) - \tilde{v}^i_\epsilon(\cdot,\bs{y}_\epsilon,\bs{z}_\epsilon) - \tilde{v}^s_\epsilon(\cdot,\bs{y}_\epsilon,\bs{z}_\epsilon).
\end{align}
It solves the scattering problem
\begin{align}
\left\{
\begin{array}{ll}
\Delta \tilde{e}_\epsilon(\cdot,\bs{y}_\epsilon,\bs{z}_\epsilon) + k^2 \tilde{e}_\epsilon(\cdot,\bs{y}_\epsilon,\bs{z}_\epsilon) = 0 \quad \text{in $\R^2\setminus\{\overline{D}\cup \{\bs{y}_\epsilon\}\}$}, \\[0.4em]
\tilde{e}_\epsilon(\cdot,\bs{y}_\epsilon,\bs{z}_\epsilon) = 0 \quad \text{on $\partial D$}, \\[0.4em]
\tilde{e}_\epsilon(\cdot,\bs{y}_\epsilon,\bs{z}_\epsilon) = \tilde{f}_\epsilon(\cdot,\bs{y}_\epsilon,\bs{z}_\epsilon) \quad \text{on $\partial D_\epsilon$}, \\[0.4em]
\text{$e_\epsilon(\cdot,\bs{y}_\epsilon,\bs{z}_\epsilon)$ is radiating},
\end{array}
\right.
\end{align}
with $\tilde{f}_\epsilon(\cdot,\bs{y}_\epsilon,\bs{z}_\epsilon) = \phi(\cdot,\bs{z}_\epsilon) - u^s(\cdot,\bs{z}_\epsilon) - \tilde{v}^i_\epsilon(\cdot,\bs{y}_\epsilon,\bs{z}_\epsilon) - \tilde{v}^s_\epsilon(\cdot,\bs{y}_\epsilon,\bs{z}_\epsilon)$. We obtain the following theorem from \cref{thm:e} and the formulas \cref{eq:v^it} and \cref{eq:v^st} for $\tilde{v}^i_\epsilon$ and $\tilde{v}^s_\epsilon$.

\begin{theorem}[Estimates on $\tilde{e}_\epsilon$]\label{thm:et}
There exists a constant $C>0$, independent of $\epsilon$, such that for small enough $\epsilon$,
\begin{align}
\Vert \tilde{e}_\epsilon(\cdot,\bs{y}_\epsilon,\bs{z}_\epsilon)\Vert_{H^1(B)} \leq C (\vert\log\epsilon\vert^{-1}\epsilon^{p}\epsilon^{q/2} + \epsilon^{p/2}\epsilon^{q/2}\epsilon^2).
\end{align}
\end{theorem}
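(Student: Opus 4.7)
The plan is to decompose $\tilde{e}_\epsilon$ into pieces already under control. Writing
\[
\tilde{e}_\epsilon = e_\epsilon + (v^i_\epsilon - \tilde{v}^i_\epsilon) + (v^s_\epsilon - \tilde{v}^s_\epsilon),
\]
the triangle inequality in $H^1(B)$ reduces the problem to bounding the three summands separately. The first is given directly by \cref{thm:e} and contributes the $\OO(\vert\log\epsilon\vert^{-1}\epsilon^p\epsilon^{q/2})$ term of the claimed bound.

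For the second summand, the key observation is that $\tilde{v}^i_\epsilon$ is exactly the $n=0$ term of the series of \cref{lem:v^i}, except for the factor $J_0(2\pi\epsilon)$, which satisfies $J_0(2\pi\epsilon) = 1 + \OO(\epsilon^2)$. The difference $v^i_\epsilon - \tilde{v}^i_\epsilon$ therefore splits as the scalar correction $(J_0(2\pi\epsilon)-1)\,\tilde{v}^i_\epsilon$ plus the tail $\sum_{n\neq 0}$. The scalar correction is $\OO(\epsilon^2)$ times a field whose $H^1(B)$-norm is already bounded by $\OO(\vert\log\epsilon\vert^{-1}\epsilon^{p/2}\epsilon^{q/2})$ through \cref{lem:v^i}, so it is harmless. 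For the tail, I would combine the small-argument asymptotics $J_n(2\pi\epsilon)/H_n^{(1)}(2\pi\epsilon) = \OO(\epsilon^{2\vert n\vert})$ with the large-argument decay $H_n^{(1)}(k\vert\bs{y}_\epsilon-\bs{z}_\epsilon\vert) = \OO(\epsilon^{q/2})$ and the uniform bound $H_n^{(1)}(k\vert\cdot-\bs{y}_\epsilon\vert) = \OO(\epsilon^{p/2})$ on $B$ (respectively on $\partial D$). The dominant contribution comes from $\vert n\vert=1$ and yields a clean $\OO(\epsilon^{p/2}\epsilon^{q/2}\epsilon^2)$ bound both in $H^1(B)$ and in $H^{1/2}(\partial D)$.

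The third summand is handled by observing that $v^s_\epsilon - \tilde{v}^s_\epsilon$ solves the sound-soft scattering problem for $D$ with Dirichlet data $-(v^i_\epsilon - \tilde{v}^i_\epsilon)$ on $\partial D$. The continuity of the data-to-field map used in the proof of \cref{lem:v^s} then transfers the $H^{1/2}(\partial D)$ estimate obtained above into an $H^1(B)$ estimate of the same order $\OO(\epsilon^{p/2}\epsilon^{q/2}\epsilon^2)$. Summing the three bounds yields the theorem.

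The main obstacle will be the second step, and specifically making the term-by-term tail estimate uniform over $\bs{x}\in B$ and $\bs{x}\in\partial D$: one must combine the small-argument expansions of $J_n$ and $H_n^{(1)}$ near $2\pi\epsilon$ with large-argument asymptotics along rays emanating from $\bs{y}_\epsilon$, and verify that the resulting geometric decay in $n$ is strong enough to keep the series summable. This is exactly the content of the asymptotics developed in \cite{cassier2013} on which the paper relies.
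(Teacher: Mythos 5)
Your decomposition $\tilde{e}_\epsilon = e_\epsilon + (v^i_\epsilon - \tilde{v}^i_\epsilon) + (v^s_\epsilon - \tilde{v}^s_\epsilon)$ is exactly the paper's, and your treatment of the first summand (via \cref{thm:e}) and of the third (solve the sound-soft problem for $D$ with data $-(v^i_\epsilon-\tilde v^i_\epsilon)$ on $\partial D$ and use continuity of the data-to-field map, as in \cref{lem:v^s}) coincides with the paper's proof. The divergence is in the middle term, and it matters. You propose to estimate $e^i_\epsilon = \tilde v^i_\epsilon - v^i_\epsilon$ directly from the Fourier--Hankel series of \cref{lem:v^i}; the paper writes down that very series but then does \emph{not} use it for the estimate --- indeed, in the proof of \cref{lem:v^i} the authors state explicitly that series-based estimates are avoided ``due to the lack of uniform estimates for the terms of the series.'' Instead, the paper observes that $e^i_\epsilon$ is the radiating solution exterior to $D_\epsilon$ with Dirichlet data $f = -(\phi(\bs{y}_\epsilon,\bs{z}_\epsilon)-\phi(\cdot,\bs{z}_\epsilon))$, represents it as $-\mathscr{S}_\epsilon S_\epsilon^{-1}f$ and $-T_\epsilon S_\epsilon^{-1}f$, and invokes the refined bounds of \cref{lem:appendix-ext} (built on \cref{lem:appendix-fourier} and \cref{thm:appendix-int}): since $f(\bs{y}_\epsilon)=0$, the leading $\vert\log\epsilon\vert^{-1}\epsilon^{p/2}\vert f(\bs{y}_\epsilon)\vert$ contribution vanishes and only the $\OO(\epsilon^{p/2}\epsilon^{r}\epsilon^{2})$ remainder with $r=q/2$ survives. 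That vanishing of the data at the disk's center is the structural reason for the extra $\epsilon^2$, and it is the same phenomenon you see as ``only the $n=0$ monopole is captured by $\tilde v^i_\epsilon$, up to $J_0(2\pi\epsilon)=1+\OO(\epsilon^2)$.''

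As written, your route has a genuine gap precisely where you flag it: the term-by-term bounds $J_n(2\pi\epsilon)/H_n^{(1)}(2\pi\epsilon)=\OO(\epsilon^{2\vert n\vert})$ and $H_n^{(1)}(k\vert\bs{y}_\epsilon-\bs{z}_\epsilon\vert)=\OO(\epsilon^{q/2})$ come from \cref{lem:appendix-small-x} and \cref{lem:appendix-large-x}, whose constants depend on $n$, so summing them over $n$ proves nothing without additional input. The obstruction is real in the regime where $n$ and the arguments $k\vert\bs{y}_\epsilon-\bs{z}_\epsilon\vert$, $k\vert\bs{x}-\bs{y}_\epsilon\vert$ grow simultaneously: there the large-argument decay $\OO(\epsilon^{q/2})$, $\OO(\epsilon^{p/2})$ is simply not available term by term. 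The argument can most likely be repaired --- split the sum at a fixed $N$, treat $1\le\vert n\vert\le N$ with the non-uniform asymptotics (finitely many constants), and for $\vert n\vert>N$ use the uniform large-order estimates of \cref{lem:appendix-large-n} for $J_n(2\pi\epsilon)H_n^{(1)}(2\pi\epsilon)$ together with the monotonic decrease of $\vert H_n^{(1)}(t)\vert$ in $t$ to bound the tail by $\OO(\epsilon^{2(N+1)})$, then choose $N$ large enough that $2(N+1)\ge 2+p/2+q/2$ --- but none of this is in your proposal, and the monotonicity fact and the $H^1(B)$/$H^{1/2}(\partial D)$ (as opposed to pointwise) versions of these bounds are not free. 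The paper's single-layer route was designed to bypass exactly this, so if you want a proof that reuses the machinery already established, you should switch to the representation $e^i_\epsilon=-\mathscr{S}_\epsilon S_\epsilon^{-1}f$ and let \cref{lem:appendix-ext} do the work.
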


\begin{proof}
We combine \cref{eq:e} with \cref{eq:et} to write
\begin{align}
\tilde{e}_\epsilon(\cdot,\bs{y}_\epsilon,\bs{z}_\epsilon) = e_\epsilon(\cdot,\bs{y}_\epsilon,\bs{z}_\epsilon) + \tilde{v}^i_\epsilon(\cdot,\bs{y}_\epsilon,\bs{z}_\epsilon) - v^i_\epsilon(\cdot,\bs{y}_\epsilon,\bs{z}_\epsilon) + \tilde{v}^s_\epsilon(\cdot,\bs{y}_\epsilon,\bs{z}_\epsilon) - v^s_\epsilon(\cdot,\bs{y}_\epsilon,\bs{z}_\epsilon).
\end{align}
Let us define $e_\epsilon^i = \tilde{v}^i_\epsilon - v^i_\epsilon$ and $e_\epsilon^s = \tilde{v}^s_\epsilon - v^s_\epsilon$. These solve the scattering problems
\begin{align}\label{eq:dv^i}
\left\{
\begin{array}{ll}
\Delta e_\epsilon^i(\cdot,\bs{y}_\epsilon,\bs{z}_\epsilon) + k^2 e_\epsilon^i(\cdot,\bs{y}_\epsilon,\bs{z}_\epsilon) = 0 \quad \text{in $\R^2\setminus\overline{D_\epsilon}$}, \\[0.4em]
e_\epsilon^i(\cdot,\bs{y}_\epsilon,\bs{z}_\epsilon) = -(\phi(\bs{y}_\epsilon,\bs{z}_\epsilon)-\phi(\cdot,\bs{z}_\epsilon)) \quad \text{on $\partial D_\epsilon$}, \\[0.4em]
\text{$e_\epsilon^i(\cdot,\bs{y}_\epsilon,\bs{z}_\epsilon)$ is radiating},
\end{array}
\right.
\end{align}
and
\begin{align}\label{eq:dv^s}
\left\{
\begin{array}{ll}
\Delta e_\epsilon^s(\cdot,\bs{y}_\epsilon,\bs{z}_\epsilon) + k^2 e_\epsilon^s(\cdot,\bs{y}_\epsilon,\bs{z}_\epsilon) = 0 \quad \text{in $\R^2\setminus\overline{D}$}, \\[0.4em]
e_\epsilon^s(\cdot,\bs{y}_\epsilon,\bs{z}_\epsilon) = -e_\epsilon^i(\cdot,\bs{y}_\epsilon,\bs{z}_\epsilon) \quad \text{on $\partial D$}, \\[0.4em]
\text{$e_\epsilon^s(\cdot,\bs{y}_\epsilon,\bs{z}_\epsilon)$ is radiating}.
\end{array}
\right.
\end{align}
We note that, using the notations of \cref{lem:v^i},
\begin{align}
e_\epsilon^i(\bs{x},\bs{y}_\epsilon,\bs{z}_\epsilon) = & \; -\frac{i}{4}\frac{H_0^{(1)}(k\vert\bs{y}_\epsilon-\bs{z}_\epsilon\vert)(1-J_0(2\pi\epsilon))}{H_0^{(1)}(2\pi\epsilon)}H_0^{(1)}(k\vert\bs{x}-\bs{y}_\epsilon\vert) \\
& \; -\frac{i}{4}\sum_{\vert n\vert \neq0}\frac{H_n^{(1)}(k\vert\bs{y}_\epsilon-\bs{z}_\epsilon\vert)J_n(2\pi\epsilon)}{H_n^{(1)}(2\pi\epsilon)}e^{-in(\mu_\epsilon+\pi)} H_n^{(1)}(k\vert\bs{x}-\bs{y}_\epsilon\vert)e^{in\theta}. \nonumber
\end{align}
We write $e_\epsilon^i(\cdot,\bs{y}_\epsilon,\bs{z}_\epsilon)=-\mathscr{S}_\epsilon S_\epsilon^{-1}f(\cdot,\bs{y}_\epsilon,\bs{z}_\epsilon)$ in $B$ and $e_\epsilon^i(\cdot,\bs{y}_\epsilon,\bs{z}_\epsilon)=-T_\epsilon S_\epsilon^{-1}f(\cdot,\bs{y}_\epsilon,\bs{z}_\epsilon)$ on $\partial D$ with $f(\cdot,\bs{y}_\epsilon,\bs{z}_\epsilon) = -(\phi(\bs{y}_\epsilon,\bs{z}_\epsilon)-\phi(\cdot,\bs{z}_\epsilon))$. Applying \cref{lem:appendix-ext} to $f(\cdot,\bs{y}_\epsilon,\bs{z}_\epsilon)$ with $r=q/2$, we obtain, for small enough $\epsilon$,
\begin{align}
\Vert e_\epsilon^i(\cdot,\bs{y}_\epsilon,\bs{z}_\epsilon)\Vert_{H^{1/2}(\partial D)} = \Vert T_\epsilon S_\epsilon^{-1}f_\epsilon(\cdot,\bs{y}_\epsilon,\bs{z}_\epsilon)\Vert_{H^{1/2}(\partial D)} \leq C\epsilon^{p/2}\epsilon^{q/2}\epsilon^2
\end{align}
and
\begin{align}
\Vert e_\epsilon^i(\cdot,\bs{y}_\epsilon,\bs{z}_\epsilon)\Vert_{H^1(B)} = \Vert\mathscr{S}_\epsilon S_\epsilon^{-1}f_\epsilon(\cdot,\bs{y}_\epsilon,\bs{z}_\epsilon)\Vert_{H^1(B)} \leq C\epsilon^{p/2}\epsilon^{q/2}\epsilon^2,
\end{align}
with a constant $C$ independent of $\epsilon$. We conclude by noting that 
\begin{align}
\Vert e_\epsilon^s(\cdot,\bs{y}_\epsilon,\bs{z}_\epsilon)\Vert_{H^1(B)}\leq C \Vert e_\epsilon^i(\cdot,\bs{y}_\epsilon,\bs{z}_\epsilon)\Vert_{H^{1/2}(\partial D)}
\end{align}
and using the triangle inequality.
\end{proof}

In the proof of \cref{thm:et}, we showed that approximating $v^i_\epsilon$ by $\tilde{v}^i_\epsilon$ yields an error $\OO(\epsilon^2)$ when $p=q=0$. This is sharper than the error $\OO(\vert\log\epsilon\vert^{-1}\epsilon)$ proved in \cite[Thm.~1]{cassier2013}.

\section{A modified Helmholtz--Kirchhoff identity}\label{sec:mod-HK-identity}

In this section, we will establish that the total field $\tilde{v}_\epsilon = \tilde{v}^i_\epsilon + \tilde{v}^s_\epsilon$ satisfies a \textit{modified} Helmholtz--Kirchhoff identity. Consequently, the information is encapsulated in $\tilde{v}_\epsilon$. However, the practical challenge lies in having access only to measurements of $w^s_\epsilon = u^s + \tilde{v}_\epsilon + \tilde{e}_\epsilon$. We first outline the process of extracting $\tilde{v}_\epsilon$.

Let $\Sigma_\epsilon$ be the circle of radius $\lambda\epsilon^{-p}$ centered at the origin and let $\langle\cdot\rangle$ denote the average with respect to $\bs{y}_\epsilon\in\Sigma_\epsilon$, e.g.,
\begin{align}
\langle w^s_\epsilon\rangle(\bs{x},\bs{z}_\epsilon) = \frac{1}{\vert\Sigma_\epsilon\vert}\int_{\Sigma_\epsilon}w^s_\epsilon(\bs{x},\bs{y}_\epsilon,\bs{z}_\epsilon)ds(\bs{y}_\epsilon).
\end{align}
Since $u^s(\cdot,\bs{z}_\epsilon)$ does not depend on $\bs{y}_\epsilon\in\Sigma_\epsilon$, we note that, for any $\bs{x}\in B$,
\begin{align}\label{eq:average}
w^s_\epsilon(\bs{x},\bs{y}_\epsilon,\bs{z}_\epsilon) - \langle w^s_\epsilon\rangle(\bs{x},\bs{z}_\epsilon) = \tilde{v}_\epsilon(\bs{x},\bs{y}_\epsilon,\bs{z}_\epsilon) - \langle \tilde{v}_\epsilon\rangle(\bs{x},\bs{z}_\epsilon) + \tilde{e}_\epsilon(\bs{x},\bs{y}_\epsilon,\bs{z}_\epsilon) - \langle\tilde{e}_\epsilon\rangle(\bs{x},\bs{z}_\epsilon).
\end{align}
We are going to show that the $\bs{y}_\epsilon$-average of $\tilde{v}_\epsilon$ is small in comparison to $\tilde{v}_\epsilon$ (\cref{thm:average}). Therefore, removing the $\bs{y}_\epsilon$-average of $w^s_\epsilon$ from $w^s_\epsilon$ is a simple way of accessing $\tilde{v}_\epsilon$, as the other three terms in the right-hand side of \cref{eq:average} are negligible.

\begin{theorem}[Norm of $\tilde{v}_\epsilon$ and $\langle\tilde{v}_\epsilon\rangle$]\label{thm:average}
There exists a constant $C>0$, independent of $\epsilon$, such that for small enough $\epsilon$,
\begin{align}
\Vert\tilde{v}_\epsilon(\cdot,\bs{y}_\epsilon,\bs{z}_\epsilon)\Vert_{H^1(B)} \leq C \vert\log\epsilon\vert^{-1}\epsilon^{p/2}\epsilon^{q/2}
\end{align}
and
\begin{align}
\Vert\langle\tilde{v}_\epsilon\rangle(\cdot,\bs{z}_\epsilon)\Vert_{H^1(B)} \leq C \vert\log\epsilon\vert^{-1}\epsilon^{p}\epsilon^{q/2}.
\end{align}
\end{theorem}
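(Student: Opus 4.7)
The plan is to handle the two bounds separately, the second being considerably more delicate.

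For the first inequality, I would exploit the explicit factorization $\tilde{v}_\epsilon(\cdot,\bs{y}_\epsilon,\bs{z}_\epsilon) = \mu_\epsilon(\bs{y}_\epsilon,\bs{z}_\epsilon)[\phi(\cdot,\bs{y}_\epsilon) + u^s(\cdot,\bs{y}_\epsilon)]$ coming from \cref{eq:v^it} and \cref{eq:v^st}, then bound the scalar coefficient and the two functions separately. Combining the large-argument Hankel asymptotic on $H_0^{(1)}(k|\bs{y}_\epsilon - \bs{z}_\epsilon|)$ (distance of order $\lambda\epsilon^{-q}$) with the small-argument Hankel asymptotic on $H_0^{(1)}(2\pi\epsilon)$ in the denominator yields $|\mu_\epsilon| = \OO(|\log\epsilon|^{-1}\epsilon^{q/2})$. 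The norm $\Vert\phi(\cdot,\bs{y}_\epsilon)\Vert_{H^1(B)} = \OO(\epsilon^{p/2})$ follows from \cref{lem:appendix-large-x} since $|\bs{y}_\epsilon| = \OO(\epsilon^{-p})$, and $\Vert u^s(\cdot,\bs{y}_\epsilon)\Vert_{H^1(B)} = \OO(\epsilon^{p/2})$ follows from a verbatim rerun of \cref{lem:u^s} with $\bs{y}_\epsilon$ in place of $\bs{z}_\epsilon$ (exponent $p$ rather than $q$). Summing gives the claimed bound.

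For the second inequality, the plan is to capture the phase cancellation that arises when two outgoing-wave Hankel kernels are integrated over the large circle $\Sigma_\epsilon$. Parametrize $\bs{y}_\epsilon = Re^{i\alpha}$ with $R = \lambda\epsilon^{-p}$. Since $|\bs{z}_\epsilon| > R$ (as $q > p$), $\mu_\epsilon(\bs{y}_\epsilon,\bs{z}_\epsilon)$ admits a Graf expansion centered at the origin with coefficients $J_n(kR)H_n^{(1)}(k|\bs{z}_\epsilon|)$; since $|\bs{x}| < R$ for $\bs{x}\in B$, $\phi(\bs{x},\bs{y}_\epsilon)$ expands with coefficients $J_n(k|\bs{x}|)H_n^{(1)}(kR)$. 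Orthogonality of $\{e^{in\alpha}\}$ collapses $\tfrac{1}{2\pi}\int_0^{2\pi}\mu_\epsilon\phi\,d\alpha$ to a single series whose distinctive coefficient is $J_n(kR)H_n^{(1)}(kR)$. Writing $J_n = \tfrac{1}{2}(H_n^{(1)} + H_n^{(2)})$, this factor splits as $\tfrac{1}{2}(H_n^{(1)}(kR))^2 + \tfrac{1}{2}H_n^{(1)}(kR)H_n^{(2)}(kR)$, both pieces of size $\OO(1/(kR)) = \OO(\epsilon^p)$ in the oscillatory regime $|n|\lesssim kR$. Resumming the remaining factors via Graf (forwards for the non-oscillating piece, and after a rotation $\bs{x}\mapsto -\bs{x}$ that absorbs the $(-1)^n$ factor from $(H_n^{(1)}(kR))^2\sim (-1)^n e^{2ikR}/(\pi kR)$ for the oscillating piece) reassembles Hankel functions $H_0^{(1)}(k|\bs{x}\pm\bs{z}_\epsilon|)$, each of size $\OO(\epsilon^{q/2})$. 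Combined with $|H_0^{(1)}(2\pi\epsilon)|^{-1} = \OO(|\log\epsilon|^{-1})$, this delivers the claimed bound pointwise, and the $H^1(B)$ bound follows by termwise differentiation in $\bs{x}$. The contribution of $\langle\tilde{v}^s_\epsilon\rangle$ is handled in the same way after invoking reciprocity $u^s(\bs{x},\bs{y}_\epsilon) = u^s(\bs{y}_\epsilon,\bs{x})$ and expanding the radiating field $u^s(\cdot,\bs{x})$ on $\Sigma_\epsilon$ in outgoing harmonics $\sum_m a_m(\bs{x})H_m^{(1)}(kR)e^{im\alpha}$; the same $J_n(kR)H_n^{(1)}(kR)$ factor reappears and produces the same gain of $\OO(\epsilon^p)$.

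The main obstacle I anticipate is uniform-in-$\bs{x}$ control of the Bessel series, particularly in the transition regime $|n|\sim kR$ where asymptotics change character. I would dispatch this by combining $|J_n(x)|\leq 1$, the exponential smallness $|J_n(x)|\lesssim (ex/(2|n|))^{|n|}$ for $|n|\gg x$, and the classical uniform asymptotic $H_n^{(1)}(x)H_n^{(2)}(x) = J_n^2(x) + Y_n^2(x)\sim 2/(\pi x)$, after which absolute summability is automatic and the estimate transfers to $H^1(B)$ via termwise differentiation of $J_n(k|\bs{x}|)$.
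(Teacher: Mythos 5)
Your argument is correct, and for the harder half of the statement it follows a genuinely different route from the paper. For the first bound, your factorization $\tilde{v}_\epsilon=\mu_\epsilon[\phi(\cdot,\bs{y}_\epsilon)+u^s(\cdot,\bs{y}_\epsilon)]$ with $\vert\mu_\epsilon\vert=\OO(\vert\log\epsilon\vert^{-1}\epsilon^{q/2})$ is essentially one of the two routes the paper suggests (the explicit-formula route via \cref{lem:appendix-average}); the paper's alternative is to transfer the bounds of \cref{lem:v^i} and \cref{lem:v^s} using \cref{thm:et}. For the second bound, the paper works directly with the closed form $\tilde{v}^i_\epsilon=-\tfrac{i}{4}H_0^{(1)}(k\vert\bs{y}_\epsilon-\bs{z}_\epsilon\vert)H_0^{(1)}(k\vert\bs{x}-\bs{y}_\epsilon\vert)/H_0^{(1)}(2\pi\epsilon)$, expands both Hankel factors by large-argument asymptotics, and recognizes the resulting angular average as $J_0(2\pi h)$ with $h=\OO(\epsilon^{-p})$ (\cref{lem:appendix-integral,lem:appendix-hank,lem:appendix-average}), whence the extra $\epsilon^{p/2}$; it then dispatches $\langle\tilde{v}^s_\epsilon\rangle$ in one line by observing that averaging commutes with the (bounded, linear) exterior scattering map, so $\langle\tilde{v}^s_\epsilon\rangle$ is the scattered field for the incident wave $\langle\tilde{v}^i_\epsilon\rangle$. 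Your Graf-plus-orthogonality computation captures the same cancellation through the diagonal factor $J_n(kR)H_n^{(1)}(kR)=\OO(1/(kR))$, which is arguably more transparent about \emph{where} the gain comes from, and it generalizes more readily (e.g.\ to averages against non-uniform densities on $\Sigma_\epsilon$); the price is the uniformity bookkeeping you flag yourself. Two small cautions there: the asymptotic $H_n^{(1)}(x)H_n^{(2)}(x)\sim 2/(\pi x)$ degrades to $\OO(x^{-2/3})$ in the transition regime $\vert n\vert\sim x$, and $\vert H_n^{(1)}(x)H_n^{(2)}(x)\vert$ grows for $\vert n\vert\gg x$, so the argument really does lean on the super-exponential smallness of $J_n(k\vert\bs{x}\vert)$ for $\vert n\vert\gg 1$ (and, for the $\tilde{v}^s_\epsilon$ part, on uniform-in-$\bs{x}\in B$ decay of the multipole coefficients $a_m(\bs{x})$, which holds by compactness but deserves a sentence); with those points made explicit the proof closes. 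You might also note that the exact resummation into $H_0^{(1)}(k\vert\bs{x}\pm\bs{z}_\epsilon\vert)$ is only a leading-order identity, but since you only need an upper bound this is harmless. Overall, the paper's proof is shorter because of the "average commutes with scattering" trick; yours is more self-contained at the level of special functions.
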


\begin{proof}
The first estimate can be obtained via those on $v^i_\epsilon(\cdot,\bs{y}_\epsilon,\bs{z}_\epsilon)$ (\cref{lem:v^i}) and $v^s_\epsilon(\cdot,\bs{y}_\epsilon,\bs{z}_\epsilon)$ (\cref{lem:v^s}), or via \cref{lem:appendix-average}. For the second estimate, we first note that the amplitude of $\langle\tilde{v}^i_\epsilon\rangle(\cdot,\bs{z}_\epsilon)$ can be estimated using \cref{lem:appendix-average}. To conclude, we observe that $\langle\tilde{v}^s_\epsilon\rangle(\cdot,\bs{z}_\epsilon)$ is a scattered field for the incident wave $\langle\tilde{v}^i_\epsilon\rangle(\cdot,\bs{z}_\epsilon)$.
\end{proof}

For any $\bs{x}'\in\R^2\setminus\overline{D}$, we recall that $u^s(\cdot,\bs{x}')$ denotes the solution to \cref{eq:u^s} for the incident wave $\phi(\cdot,\bs{x}')$. In our previous work, we utilize the fact that $u^s(\cdot,\bs{x}')$ and the associated total field $u(\bs{x},\bs{x}')=\phi(\cdot,\bs{x}')+u^s(\cdot,\bs{x}')$ verify the following Helmholtz--Kirchhoff identity
\begin{align}\label{eq:HK-identity}
u^s(\bs{x},\bs{x}') - \overline{u^s(\bs{x},\bs{x}')} = 2ik\int_{\Sigma}\overline{u(\bs{x},\bs{z})}u(\bs{x}',\bs{z})ds(\bs{z}) - \left[\phi(\bs{x},\bs{x}')-\overline{\phi(\bs{x},\bs{x}')}\right].
\end{align}
This motivated the setup of \cref{fig:passive-LSM} (left) and the introduction of the cross-correlation matrix \cref{eq:cross-cor-mat}. Indeed, in that setup, we assumed that the $J>0$ measurement points $\bs{x}_j$ are located in some bounded volume $B\subset\R^2\setminus\overline{D}$. We also assumed that there exists a surface $\Sigma$ that encloses $B$ and $D$, and that there are $L>0$ point sources $\bs{z}_\ell$ randomly distributed on $\Sigma$. These sources can transmit a unit-amplitude time-harmonic signal, one by one, so that it is possible to measure the total fields $u(\bs{x}_j,\bs{z}_\ell)$. Moreover, it is possible to compute $\phi(\bs{x}_j,\bs{x}_m)$, so we can evaluate the cross-correlation matrix \cref{eq:cross-cor-mat}. This matrix corresponds to the discretization of the right-hand side of \cref{eq:HK-identity} at points $\bs{z}_\ell$ with uniform weights and evaluated at $\bs{x}_j$ and $\bs{x}_m$, i.e.,
\begin{align}\label{eq:trap-rule-approx}
C_{jm} \approx 2ik\int_{\Sigma}\overline{u(\bs{x}_j,\bs{z})}u(\bs{x}_m,\bs{z})ds(\bs{z}) - \left[\phi(\bs{x}_j,\bs{x}_m)-\overline{\phi(\bs{x}_j,\bs{x}_m)}\right].
\end{align}
The quadrature error associated with \cref{eq:trap-rule-approx} is $\OO(1/\sqrt{L})$, in general. However, if the $\bs{z}_\ell$'s correspond to a $\beta$-perturbed trapezoidal rule, i.e., $\bs{z}_\ell = 2\pi(\ell - 1 + \beta_\ell)/L$ with $\vert\beta_\ell\vert\leq\beta<1/2$ for all $1\leq\ell\leq L$, then the error improves to $\OO(1/L^{\nu - 4\beta})$ whenever the integrand has $\nu>4\beta+1/2$ derivatives \cite[Thm.~1]{austin2017b}. (For details about computations with trigonometric interpolants, we refer the reader to \cite{montanelli2017phd, trefethen2019, montanelli2015b}.)

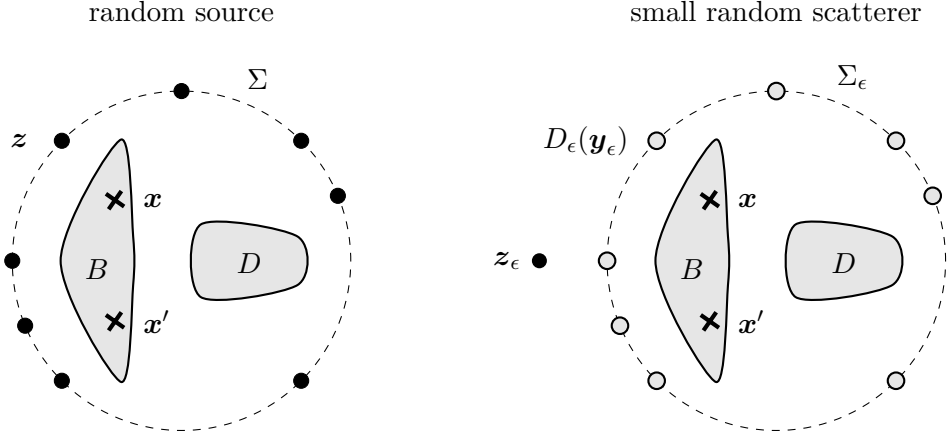
\begin{figure}
\centering
\begin{tikzpicture}[scale=0.9]

\def\R{2.5};
\def\s{0.55};
\def\t{7};
\def\x{-0.9238795325112867};
\def\y{0.38268343236508984};
\def\xx{-0.7071067811865475};
\def\yy{0.7071067811865476};
\def\z{0.7071067811865476};
\def\theta{10};

\draw[black, thick, fill=black!10, xshift=2/5*\R cm] plot[smooth cycle] coordinates {(-2*\R/\t, -1.5*\R/\t) (-2*\R/\t, 1.5*\R/\t) (2*\R/\t, \R/\t) (2*\R/\t, -\R/\t)};
\node[anchor=north] at (2/5*\R, 0.75*\R/\t) {$D$};

\draw[black, thick, fill=black!10] plot[smooth cycle] coordinates {(-2.5*\R/\t, 5*\R/\t) (-5*\R/\t, 0*\R/\t) (-2.5*\R/\t, -5*\R/\t) (-2*\R/\t, -\R/\t) (-2*\R/\t, \R/\t)};
\node[anchor=north] at (-3.5*\R/\t, 0.5*\R/\t) {$B$};

\draw[black, dashed] (0,0) circle (\R);
\node[anchor=south] at (0.45*\R, 0.95*\R) {$\Sigma$};

\node[draw, fill, circle, scale=\s] at (\R*-1, \R*0) {};
\node[draw, fill, circle, scale=\s] at (\R*0, \R*1) {};
\node[draw, fill, circle, scale=\s] at (\R*\x, -\R*\y) {};
\node[draw, fill, circle, scale=\s] at (-\R*\x, \R*\y) {};
\node[draw, fill, circle, scale=\s] at (\R*\z, \R*\z) {};
\node[draw, fill, circle, scale=\s] at (\R*\z, -\R*\z) {};
\node[draw, fill, circle, scale=\s] at (-\R*\z, \R*\z) {};
\node[anchor=east] at (-1.2*\R*\z, \R*\z) {$\bs{z}$};
\node[draw, fill, circle, scale=\s] at (-\R*\z, -\R*\z) {};

\node[cross, rotate=\theta, scale=\s] at (-2.75*\R/\t, 2.5*\R/\t) {};
\node[anchor=west] at (-2*\R/\t, 2.5*\R/\t) {$\bs{x}$};
\node[cross, rotate=\theta, scale=\s] at (-2.75*\R/\t, -2.5*\R/\t) {};
\node[anchor=west] at (-2*\R/\t, -2.5*\R/\t) {$\bs{x}'$};

\node[anchor=south] at (0, 1.35*\R) {random source};

\end{tikzpicture}
\hspace{1.5cm}
\begin{tikzpicture}[scale=0.9]

\def\R{2.5};
\def\s{0.55};
\def\t{7};
\def\x{-0.9238795325112867};
\def\y{0.38268343236508984};
\def\xx{-0.7071067811865475};
\def\yy{0.7071067811865476};
\def\z{0.7071067811865476};
\def\theta{10};

\draw[black, thick, fill=black!10, xshift=2/5*\R cm] plot[smooth cycle] coordinates {(-2*\R/\t, -1.5*\R/\t) (-2*\R/\t, 1.5*\R/\t) (2*\R/\t, \R/\t) (2*\R/\t, -\R/\t)};
\node[anchor=north] at (2/5*\R, 0.75*\R/\t) {$D$};

\draw[black, thick, fill=black!10] plot[smooth cycle] coordinates {(-2.5*\R/\t, 5*\R/\t) (-5*\R/\t, 0*\R/\t) (-2.5*\R/\t, -5*\R/\t) (-2*\R/\t, -\R/\t) (-2*\R/\t, \R/\t)};
\node[anchor=north] at (-3.5*\R/\t, 0.5*\R/\t) {$B$};

\draw[black, dashed] (0,0) circle (\R);
\node[anchor=south] at (0.45*\R, 0.95*\R) {$\Sigma_\epsilon$};

\node[fill, circle, scale=\s] at (\R*-1.4, \R*0) {};
\node[anchor=east] at (\R*-1.45, \R*0) {$\bs{z}_\epsilon$};

\node[draw, black, fill=black!10, circle, thick, scale=1.1*\s] at (\R*-1, \R*0) {};
\node[draw, black, fill=black!10, circle, thick, scale=1.1*\s] at (\R*0, \R*1) {};
\node[draw, black, fill=black!10, circle, thick, scale=1.1*\s] at (\R*\x, -\R*\y) {};
\node[draw, black, fill=black!10, circle, thick, scale=1.1*\s] at (-\R*\x, \R*\y) {};
\node[draw, black, fill=black!10, circle, thick, scale=1.1*\s] at (\R*\z, \R*\z) {};
\node[draw, black, fill=black!10, circle, thick, scale=1.1*\s] at (\R*\z, -\R*\z) {};
\node[draw, black, fill=black!10, circle, thick, scale=1.1*\s] at (-\R*\z, \R*\z) {};
\node[anchor=east] at (-1.15*\R*\z, \R*\z) {$D_\epsilon(\bs{y}_\epsilon)$};
\node[draw, black, fill=black!10, circle, thick, scale=1.1*\s] at (-\R*\z, -\R*\z) {};

\node[cross, rotate=\theta, scale=\s] at (-2.75*\R/\t, 2.5*\R/\t) {};
\node[anchor=west] at (-2*\R/\t, 2.5*\R/\t) {$\bs{x}$};
\node[cross, rotate=\theta, scale=\s] at (-2.75*\R/\t, -2.5*\R/\t) {};
\node[anchor=west] at (-2*\R/\t, -2.5*\R/\t) {$\bs{x}'$};

\node[anchor=south] at (0, 1.35*\R) {small random scatterer};

\end{tikzpicture}
\caption{In passive imaging with a random source (left), the defect $D$ is illuminated by an uncontrolled, random source located at points $\bs{z}$, which we assume to be distributed on a surface $\Sigma$ that encloses the defect. The medium's response is recorded at points $\bs{x}$ and $\bs{x}'$, contained in some measurement volume $B$. In passive imaging with a small random scatterer (right), the defect is illuminated by a single controlled point source located at $\bs{z}_\epsilon$. The incident field is scattered by a small random scatterer $D_\epsilon(\bs{y}_\epsilon)$ whose center $\bs{y}_\epsilon$ is located on $\Sigma_\epsilon$, creating a secondary, random source. The medium's response is recorded in some volume $B$.}
\label{fig:passive-LSM}
\end{figure}

We show, now, that the total field $\tilde{v}_\epsilon = \tilde{v}^i_\epsilon + \tilde{v}^s_\epsilon$ satisfies a \textit{modified} Helmholtz--Kirchhoff identity, which justifies the setup of \cref{fig:passive-LSM} (right).

\begin{theorem}[Modified Helmholtz--Kirchhoff identity]\label{thm:mod-HK-identity}
For any point $\bs{x}$ and $\bs{x}'$ in $B$, and small enough $\epsilon$,
\begin{align}\label{eq:mod-HK-identity}
u^s(\bs{x},\bs{x}') - \overline{u^s(\bs{x},\bs{x}')} = & \; 2ik\sigma_\epsilon\int_{\Sigma_\epsilon}\overline{\tilde{v}_\epsilon(\bs{x},\bs{y}_\epsilon,\bs{z}_\epsilon)}\tilde{v}_\epsilon(\bs{x}',\bs{y}_\epsilon,\bs{z}_\epsilon)ds(\bs{y}_\epsilon)\left[1 + \OO(\epsilon^{q-p})\right] \\ 
& \; - [\phi(\bs{x},\bs{x}') - \overline{\phi(\bs{x},\bs{x}')}], \nonumber
\end{align}
where the scaling factor $\sigma_\epsilon$ reads
\begin{align}
\sigma_\epsilon = \pi^2\vert H_0^{(1)}(2\pi\epsilon)\vert^2\epsilon^{-q}.
\end{align}
\end{theorem}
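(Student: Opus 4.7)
The plan is to exploit the fact that, thanks to the explicit formulas \cref{eq:v^it} and \cref{eq:v^st}, the approximate field factorizes as $\tilde{v}_\epsilon(\cdot,\bs{y}_\epsilon,\bs{z}_\epsilon) = \mu_\epsilon(\bs{y}_\epsilon,\bs{z}_\epsilon)\,u(\cdot,\bs{y}_\epsilon)$, where $u(\cdot,\bs{y}_\epsilon)=\phi(\cdot,\bs{y}_\epsilon)+u^s(\cdot,\bs{y}_\epsilon)$ is the total field generated by a point source at $\bs{y}_\epsilon$ scattered by $D$ alone. Since the scalar $\mu_\epsilon$ depends only on $\bs{y}_\epsilon$ and $\bs{z}_\epsilon$, it factors out of the bilinear integrand, giving
\[
2ik\sigma_\epsilon\!\int_{\Sigma_\epsilon}\!\overline{\tilde{v}_\epsilon(\bs{x},\bs{y}_\epsilon,\bs{z}_\epsilon)}\,\tilde{v}_\epsilon(\bs{x}',\bs{y}_\epsilon,\bs{z}_\epsilon)\,ds(\bs{y}_\epsilon) = 2ik\!\int_{\Sigma_\epsilon}\!\sigma_\epsilon|\mu_\epsilon(\bs{y}_\epsilon,\bs{z}_\epsilon)|^2\,\overline{u(\bs{x},\bs{y}_\epsilon)}\,u(\bs{x}',\bs{y}_\epsilon)\,ds(\bs{y}_\epsilon).
\]
The proof then reduces to two steps: (i) show that $\sigma_\epsilon|\mu_\epsilon(\bs{y}_\epsilon,\bs{z}_\epsilon)|^2 = 1+\OO(\epsilon^{q-p})$ uniformly for $\bs{y}_\epsilon\in\Sigma_\epsilon$, and (ii) apply the classical Helmholtz--Kirchhoff identity \cref{eq:HK-identity} on the closed curve $\Sigma_\epsilon$.

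For (i), the key observation is that $|\bs{z}_\epsilon|=\lambda\epsilon^{-q}$ dwarfs $|\bs{y}_\epsilon|=\lambda\epsilon^{-p}$ when $p<q$, so a direct expansion of $|\bs{y}_\epsilon-\bs{z}_\epsilon|^2 = \lambda^2\epsilon^{-2q}\bigl[1-2\epsilon^{q-p}\cos(\theta_y-\theta_z)+\epsilon^{2(q-p)}\bigr]$ yields $|\bs{y}_\epsilon-\bs{z}_\epsilon| = \lambda\epsilon^{-q}\bigl(1+\OO(\epsilon^{q-p})\bigr)$ uniformly in $\theta_y\in[0,2\pi]$. Combined with the large-argument asymptotics $|H_0^{(1)}(x)|^2 = \frac{2}{\pi x}\bigl(1+\OO(1/x)\bigr)$ (cf.\ \cref{lem:appendix-large-x}), this gives $|H_0^{(1)}(k|\bs{y}_\epsilon-\bs{z}_\epsilon|)|^2 = \frac{\epsilon^q}{\pi^2}\bigl(1+\OO(\epsilon^{q-p})\bigr)$ uniformly on $\Sigma_\epsilon$. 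The definition $\sigma_\epsilon = \pi^2|H_0^{(1)}(2\pi\epsilon)|^2\epsilon^{-q}$ is engineered precisely to cancel the factor $|H_0^{(1)}(2\pi\epsilon)|^{-2}$ in $|\mu_\epsilon|^2$ together with the residual $\epsilon^q/\pi^2$, so that $\sigma_\epsilon|\mu_\epsilon(\bs{y}_\epsilon,\bs{z}_\epsilon)|^2 = 1+\OO(\epsilon^{q-p})$ uniformly; this uniform estimate can then be pulled out of the integral.

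For (ii), since $\Sigma_\epsilon$ has radius $\lambda\epsilon^{-p}\to\infty$ as $\epsilon\to 0$, for small enough $\epsilon$ it encloses both $D$ (of size $\OO(1)$ centered at the origin) and $B$. The identity \cref{eq:HK-identity} applied to $\Sigma_\epsilon$ with source point $\bs{y}_\epsilon\in\Sigma_\epsilon$ and evaluation points $\bs{x},\bs{x}'\in B$ gives
\[
2ik\!\int_{\Sigma_\epsilon}\!\overline{u(\bs{x},\bs{y}_\epsilon)}\,u(\bs{x}',\bs{y}_\epsilon)\,ds(\bs{y}_\epsilon) = u^s(\bs{x},\bs{x}') - \overline{u^s(\bs{x},\bs{x}')} + \phi(\bs{x},\bs{x}') - \overline{\phi(\bs{x},\bs{x}')}.
\]
Combining (i) and (ii), and using $1/(1+\OO(\epsilon^{q-p})) = 1+\OO(\epsilon^{q-p})$ to transfer the multiplicative error to the side of the integral, produces \cref{eq:mod-HK-identity}.

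The only genuinely delicate step is the uniform bound in (i); it is an elementary but careful Hankel-asymptotics computation, and the ordering $p<q$ is exactly what guarantees that $\lambda\epsilon^{-q}$ dominates in $|\bs{y}_\epsilon-\bs{z}_\epsilon|$ irrespective of the angular coordinate on $\Sigma_\epsilon$. No scattering estimates from \cref{thm:e} or \cref{thm:et} are required at this stage, since the identity pertains to the approximate total field $\tilde{v}_\epsilon$ rather than to $w^s_\epsilon$; the gap between the two will be handled separately when the identity is invoked in practice.
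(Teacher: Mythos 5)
Your proposal is correct and follows essentially the same route as the paper: factor out the scalar $\mu_\epsilon(\bs{y}_\epsilon,\bs{z}_\epsilon)$ so that $\tilde{v}_\epsilon=\mu_\epsilon u(\cdot,\bs{y}_\epsilon)$, apply the standard Helmholtz--Kirchhoff identity \cref{eq:HK-identity} to the total field $u(\cdot,\bs{y}_\epsilon)$ on $\Sigma_\epsilon$, and absorb the weight via the large-argument Hankel asymptotics --- the paper packages your step (i) as \cref{lem:appendix-scal} (via \cref{lem:appendix-dist} and \cref{lem:appendix-hank}), writing $\vert\mu_\epsilon^{-1}\vert^2=\sigma_\epsilon(1+\OO(\epsilon^{q-p}))$ rather than $\sigma_\epsilon\vert\mu_\epsilon\vert^2=1+\OO(\epsilon^{q-p})$, which is equivalent. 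Your explicit remark that $\Sigma_\epsilon$ encloses $D$ and $B$ for small $\epsilon$, and that the estimate is uniform in $\theta_y$, is left implicit in the paper but is a welcome clarification.
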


\begin{proof}
Recall that:
\begin{align}
\tilde{v}^i_\epsilon(\bs{x},\bs{y}_\epsilon,\bs{z}_\epsilon) = \mu_\epsilon(\bs{y}_\epsilon,\bs{z}_\epsilon)\phi(\bs{x},\bs{y}_\epsilon) \quad \text{and} \quad
\tilde{v}^s_\epsilon(\bs{x},\bs{y}_\epsilon,\bs{z}_\epsilon) = \mu_\epsilon(\bs{y}_\epsilon,\bs{z}_\epsilon)u^s(\bs{x},\bs{y}_\epsilon).
\end{align}
Therefore,
\begin{align}
u^s(\bs{x},\bs{y}_\epsilon) = \mu_\epsilon(\bs{y_\epsilon},\bs{z}_\epsilon)^{-1}\tilde{v}^s_\epsilon(\bs{x},\bs{y}_\epsilon,\bs{z}_\epsilon)
\end{align}
satisfies the standard Helmholtz--Kirchhoff identity
\begin{align}
u^s(\bs{x},\bs{x}') - \overline{u^s(\bs{x},\bs{x}')} = 2ik\int_{\Sigma_\epsilon}\overline{u(\bs{x},\bs{y}_\epsilon)}u(\bs{x}',\bs{y}_\epsilon)ds(\bs{y}_\epsilon) - \left[\phi(\bs{x},\bs{x}') - \overline{\phi(\bs{x},\bs{x}')}\right],
\end{align}
with total field
\begin{align}
u(\bs{x},\bs{y}_\epsilon) = \phi(\bs{x},\bs{y}_\epsilon) + u^s(\bs{x},\bs{y}_\epsilon) = \mu_\epsilon(\bs{y}_\epsilon,\bs{z}_\epsilon)^{-1}\tilde{v}_\epsilon(\bs{x},\bs{y}_\epsilon,\bs{z}_\epsilon).
\end{align}
We can rewrite the integral as
\begin{align}
2ik\int_{\Sigma_\epsilon}\vert\mu_\epsilon(\bs{y}_\epsilon,\bs{z}_\epsilon)^{-1}\vert^2\,\overline{\tilde{v}_\epsilon(\bs{x},\bs{y}_\epsilon,\bs{z}_\epsilon)}\tilde{v}_\epsilon(\bs{x}',\bs{y}_\epsilon,\bs{z}_\epsilon)ds(\bs{y}_\epsilon).
\end{align}
We conclude by using \cref{lem:appendix-scal}.
\end{proof}

Our modified Helmholtz--Kirchhoff identity in \cref{thm:mod-HK-identity} justifies the introduction of the following \textit{modified} cross-correlation matrix,
\begin{align}\label{eq:mod-cross-cor-mat}
\widetilde{C}_{jm} = \frac{2ik\vert\Sigma_\epsilon\vert\sigma_\epsilon}{L}\sum_{\ell=1}^L\overline{\tilde{v}_\epsilon(\bs{x}_j,\bs{y}_\epsilon^\ell,\bs{z}_\epsilon)}\tilde{v}_\epsilon(\bs{x}_m,\bs{y}_\epsilon^\ell,\bs{z}_\epsilon) - \left[\phi(\bs{x}_j,\bs{x}_m) - \overline{\phi(\bs{x}_j,\bs{x}_m)}\right],
\end{align}
whose accuracy and robustness we demonstrate next in a series of numerical experiments.

\section{Numerical experiments}\label{sec:numerics}

The solution to the inverse acoustic scattering problem consists of two steps. First, the modified cross-correlation matrix \cref{eq:mod-cross-cor-mat} is filled out in the data acquisition step (direct problem). This entails solving \cref{eq:w^s} for $L$ different positions $\bs{y}_\epsilon^\ell$ of the small scatterer and evaluating the solution at $J$ points $\bs{x}_j$ (for a given $\bs{z}_\epsilon$). This yields a $J\times L$ near-field matrix $N$ with entries $N_{j\ell}=w^s_\epsilon(\bs{x}_j, \bs{y}_\epsilon^\ell, \bs{z}_\epsilon)$. We then remove its column-average, generating a $J\times L$ matrix $\widetilde{N}$ with entries $\widetilde{N}_{j\ell} \approx \tilde{v}_\epsilon(\bs{x}_j, \bs{y}_\epsilon^\ell, \bs{z}_\epsilon)$. This allows us to assemble the $J\times J$ matrix \cref{eq:mod-cross-cor-mat}. Second, we probe the medium by solving the system $\widetilde{C}g_{\bs{s}} = \phi_{\bs{s}}$, for several sampling points $\bs{s}\in\R^2$, in the data processing step (inverse problem). The right-hand side reads $(\phi_{\bs{s}})_j = \phi(\bs{x}_j, \bs{s})$, $1\leq j\leq J$. The boundary $\partial D$ of the unknown defect $D$ coincides with those points $\bs{s}$ for which $\Vert g_{\bs{s}}\Vert_2$ is large \cite[Thm.~4.3]{montanelli2023}.

\paragraph{Solving the direct problem} Our MATLAB implementation leverages the capabilities of \href{https://github.com/matthieuaussal/gypsilab}{gypsilab}, an open-source toolbox designed for efficient boundary element computations \cite{alouges2018}. The approach adopted employs the combined boundary integral formulation for the exterior Dirichlet problem \cref{eq:w^s}, allowing for the computation of weakly and strongly singular and near-singular integrals through the methods delineated in \cite{montanelli2022, montanelli2024a}. It is preferable to employ a combined integral approach, as it is coercive for large wavenumbers \cite{spence2015}. We utilize up to 100 points to discretize the boundary and aim for four-digit accuracy.

\paragraph{Solving the inverse problem} To simulate noisy measurements, we add some random noise with amplitude $\delta$ to the near-field matrix $N$, before constructing $\widetilde{C}$. This yields a noisy matrix $\widetilde{C}_\delta$. To solve $\widetilde{C}_\delta g_{\bs{s}} = \phi_{\bs{s}}$, we compute the SVD of the matrix $\widetilde{C}_\delta$, $\widetilde{C}_\delta = \widetilde{U}_\delta \widetilde{S}_\delta \widetilde{V}_\delta^*$, and apply Tikhonov regularization with parameter $\alpha>0$. To choose $\alpha$, we use Morozov's discrepancy principle. For details, we refer the reader to \cite[sect.~5]{montanelli2023}.

\paragraph{Full-aperture measurements} We consider an ellipse and a kite of size $\lambda/2$ centered at $-2\lambda-2\lambda i$ and $2\lambda + 2\lambda i$ for the wavenumber $k=2\pi$ (wavelength $\lambda =1$). The ellipse has axes $a=1.5$ and $b=1$, while the kite is that of \cite[sect.~3.6]{colton2019}. We take $\epsilon=10^{-2}$, $p=1$, $q=2$, and $\theta_z=\pi$ for the asymptotic model, which yields $\bs{z}_\epsilon=-10000$. (We will keep all parameters listed thus far unchanged throughout all experiments.) For the LSM, we take $J=120$ equispaced sensors on the circle of radius $5\lambda=5$, 
\begin{align}\label{eq:sensors}
\bs{x}_j = 5e^{i\theta_x^j},  \quad \theta_x^j = \frac{2\pi}{J}(j - 1), \quad  1\leq j\leq J,
\end{align}
and $L=150$ different positions of a single small scatterer on the circle of radius $\lambda\epsilon^{-p}=100$,
\begin{align}\label{eq:scatterer-beta}
\bs{y}_\epsilon^\ell = 100e^{i\theta_y^\ell}, \quad \theta_y^\ell = \frac{2\pi}{L}(\ell - 1 + \beta_\ell), \quad 1\leq \ell\leq L,
\end{align}
where the $\beta_\ell$'s are independent and identically distributed with the uniform distribution over $(0,0.1)$. This is a (slightly) perturbed equispaced grid, with the corresponding quadrature in \cref{eq:mod-cross-cor-mat} serving as an accurate approximation of the integral within \cref{eq:mod-HK-identity} (see also the comments preceding \cref{thm:mod-HK-identity}). Finally, we add some multiplicative noise with amplitude $5\times10^{-3}$ to the near-field measurements,\footnote{In theory, to ensure the preservation of the signal encapsulated in $\tilde{v}_\epsilon$, the noise level must align, up to a logarithmic factor, with the approximation error $\OO(\epsilon^p\epsilon^{q/2})=10^{-4}$ described in \cref{thm:e}. However, in practice, we managed to obtain satisfactory reconstructions even with noise levels as high as $5\times10^{-3}$. This still presents a clear limitation, which we will partially address in the last experiment.} and probe the medium on a $100\times100$ uniform grid on $[-6\lambda,6\lambda]\times[-6\lambda,6\lambda]$. The results are shown in \cref{fig:elli} and \cref{fig:kite} for the ellipse and the kite. In both cases, the defect is well identified by our novel LSM, based on cross-correlations and a small random scatterer. Our method can also handle several obstacles, as shown in \cref{fig:kite-two}.

\begin{figure}
\centering
\def\scl{0.25}
\includegraphics[scale=\scl]{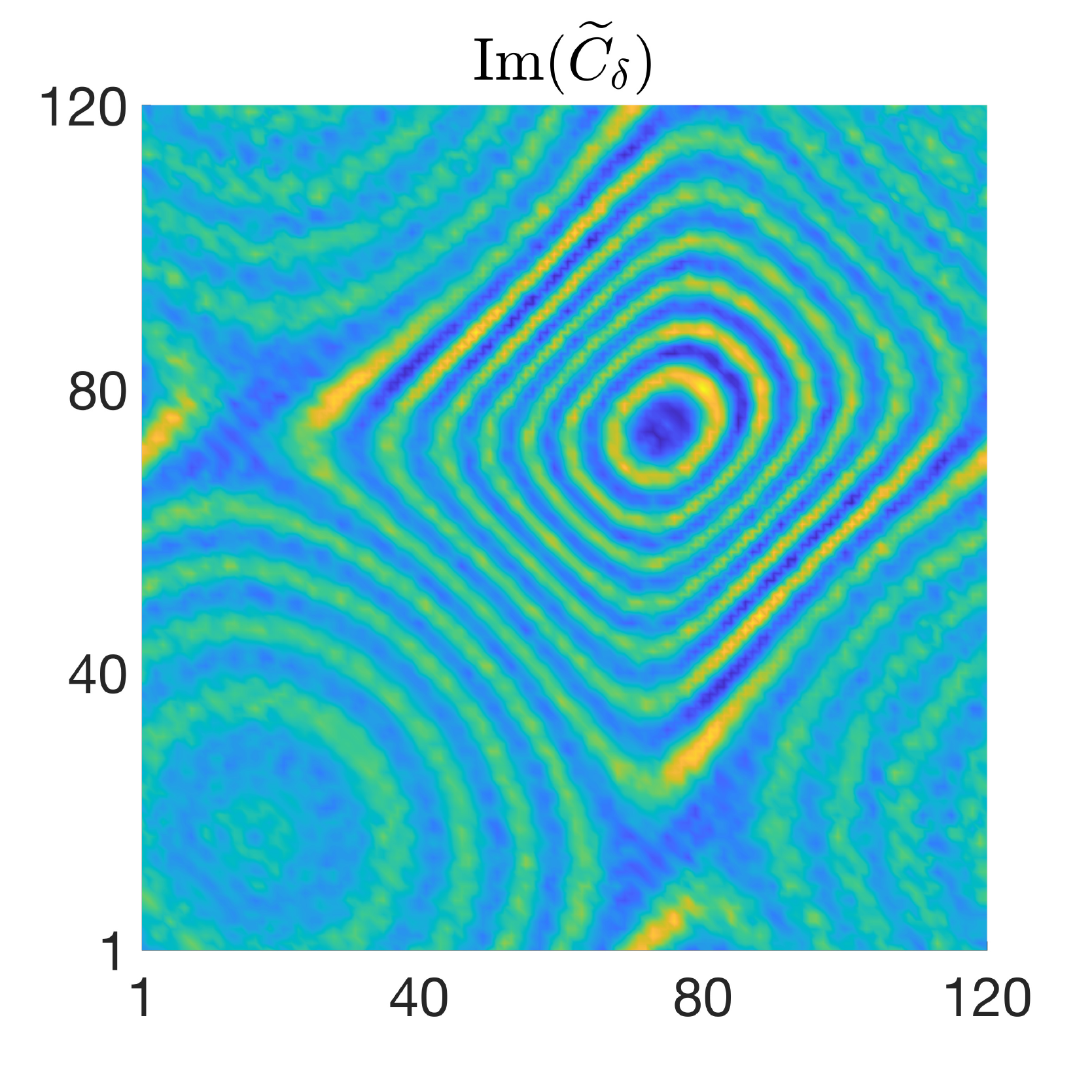}
\includegraphics[scale=\scl]{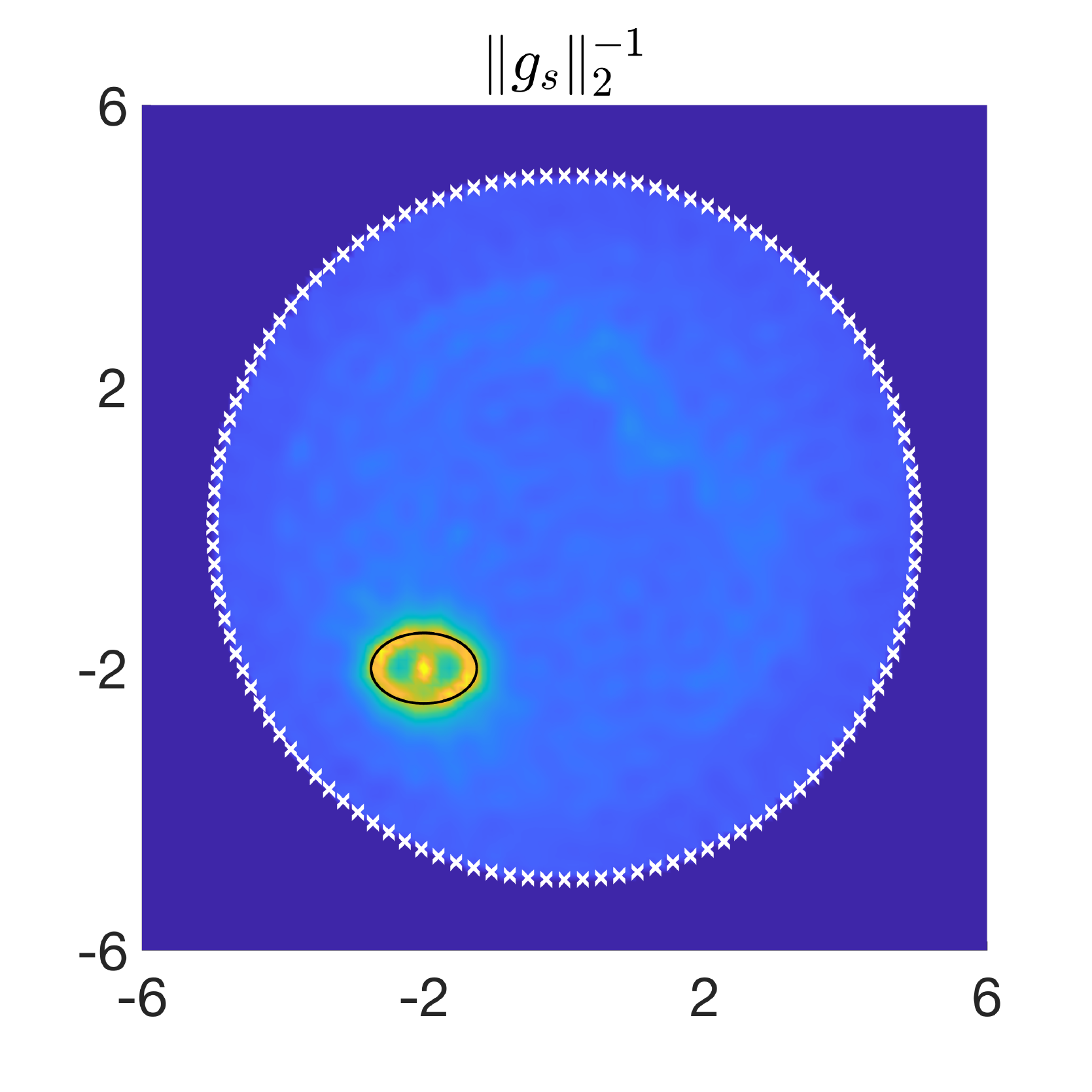}
\caption{The left picture displays the imaginary part of the modified cross-correlation matrix \cref{eq:mod-cross-cor-mat} and the right picture displays the indicator function (values outside of the disk of radius $5\lambda$ were zeroed out). The measurement points are represented by crosses, while the true boundary is depicted in a solid black line. Our sampling method, based on cross-correlations and a small random scatterer, successfully identified $D$.}
\label{fig:elli}
\end{figure}

\begin{figure}
\centering
\def\scl{0.25}
\includegraphics[scale=\scl]{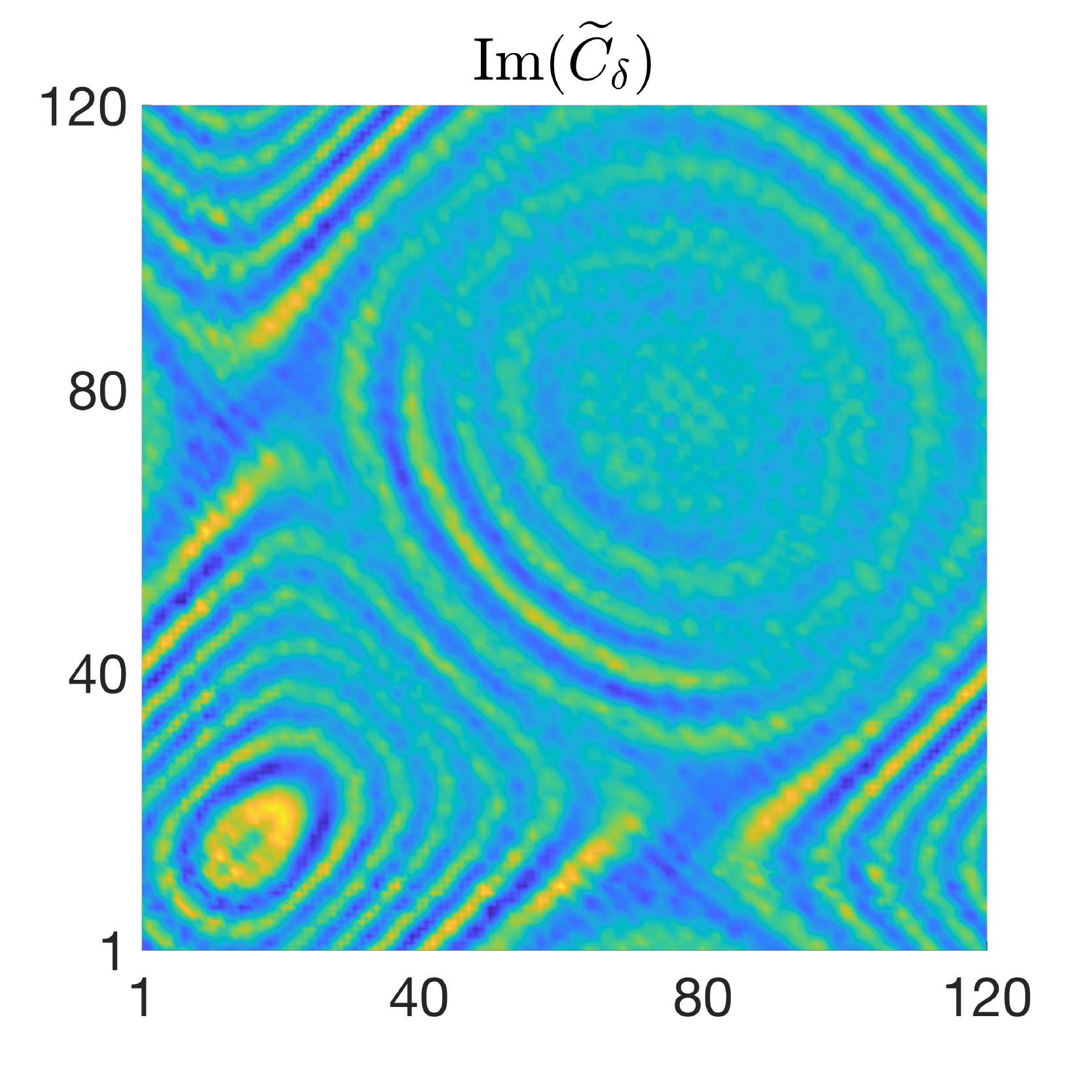}
\includegraphics[scale=\scl]{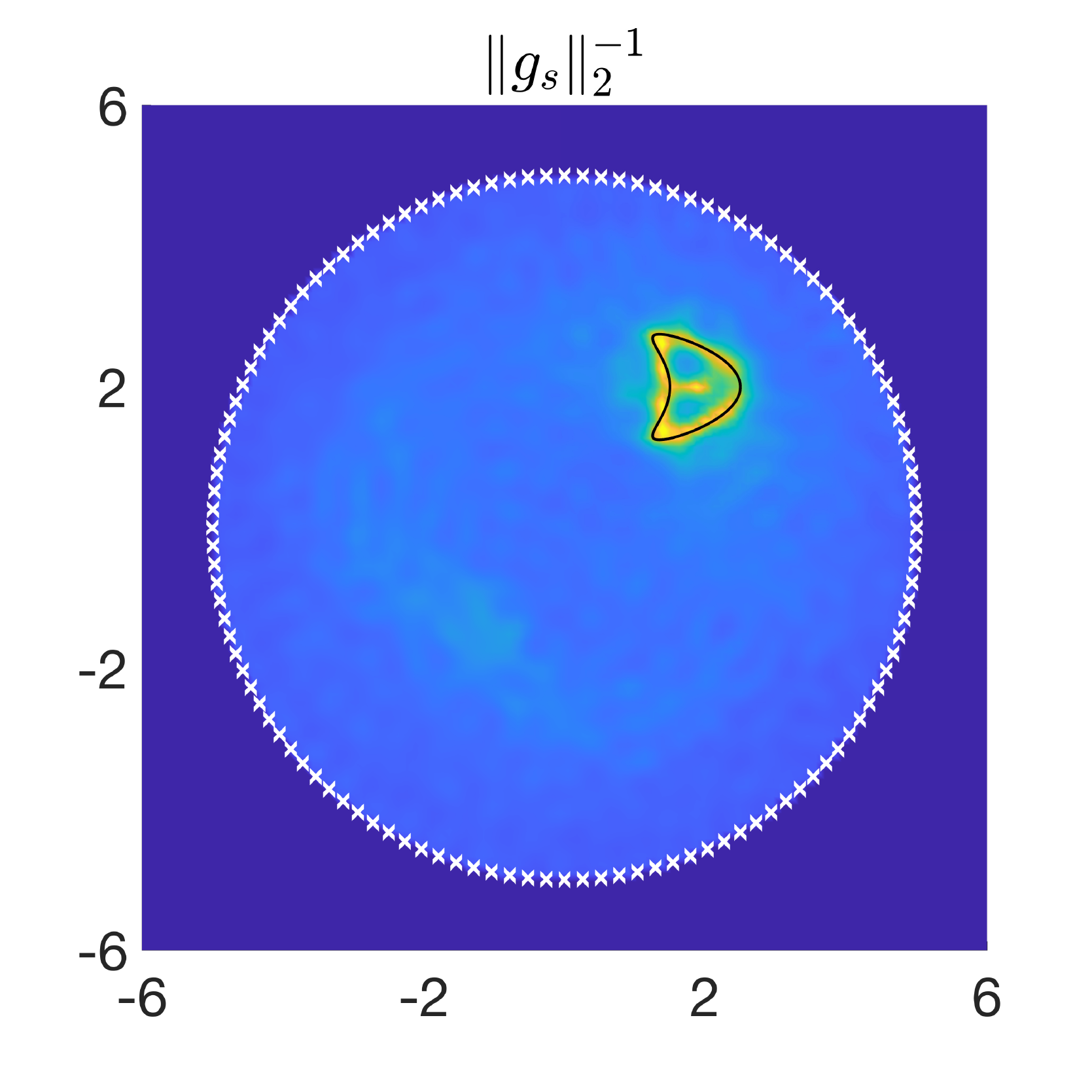}
\caption{In this experiment, too, the defect (a kite) is well identified by our method. This demonstrates that the LSM can be utilized in passive imaging with data generated by a small random scatterer. The setup in this figure is the same as in \cref{fig:elli}.}
\label{fig:kite}
\end{figure}

\begin{figure}
\centering
\def\scl{0.25}
\includegraphics[scale=\scl]{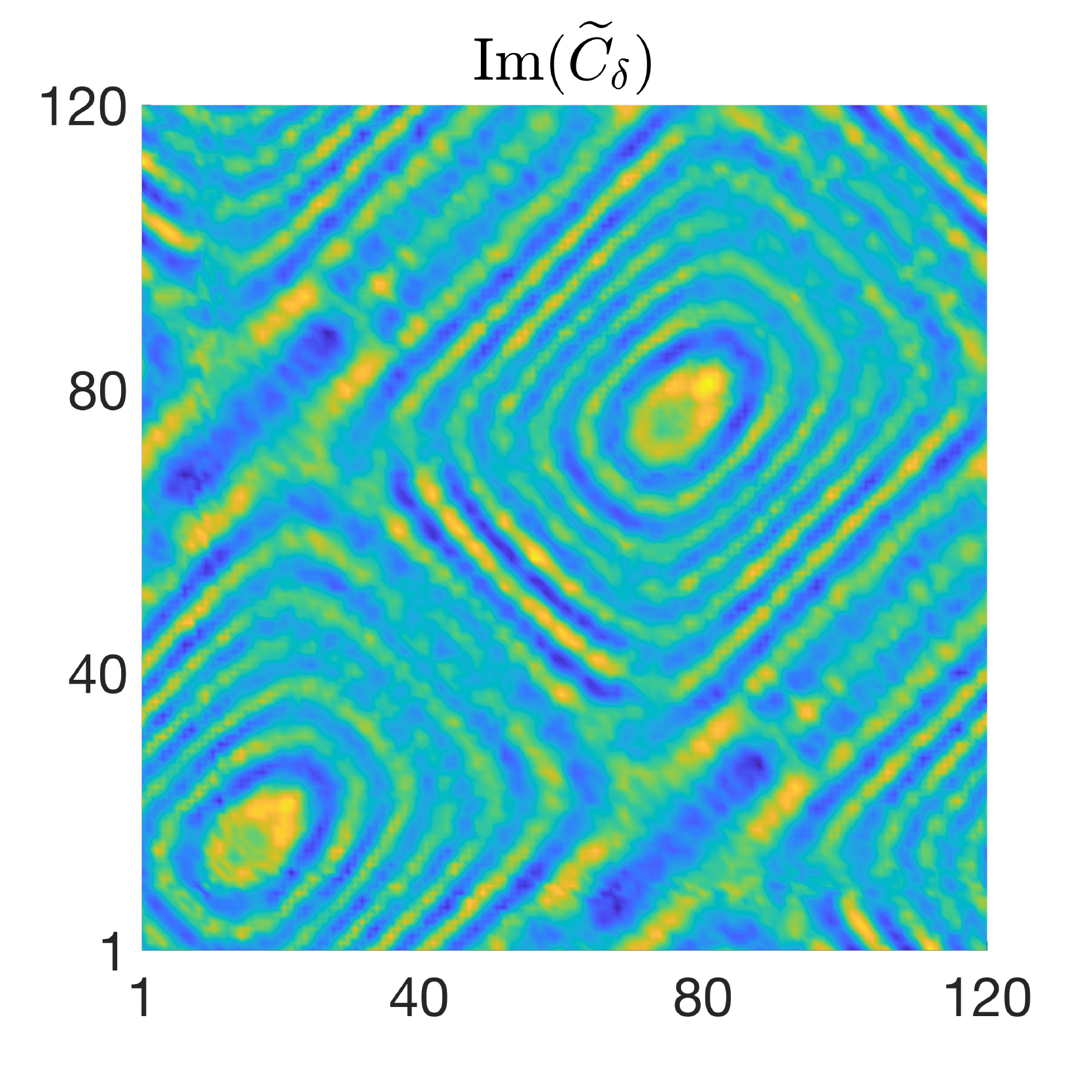}
\includegraphics[scale=\scl]{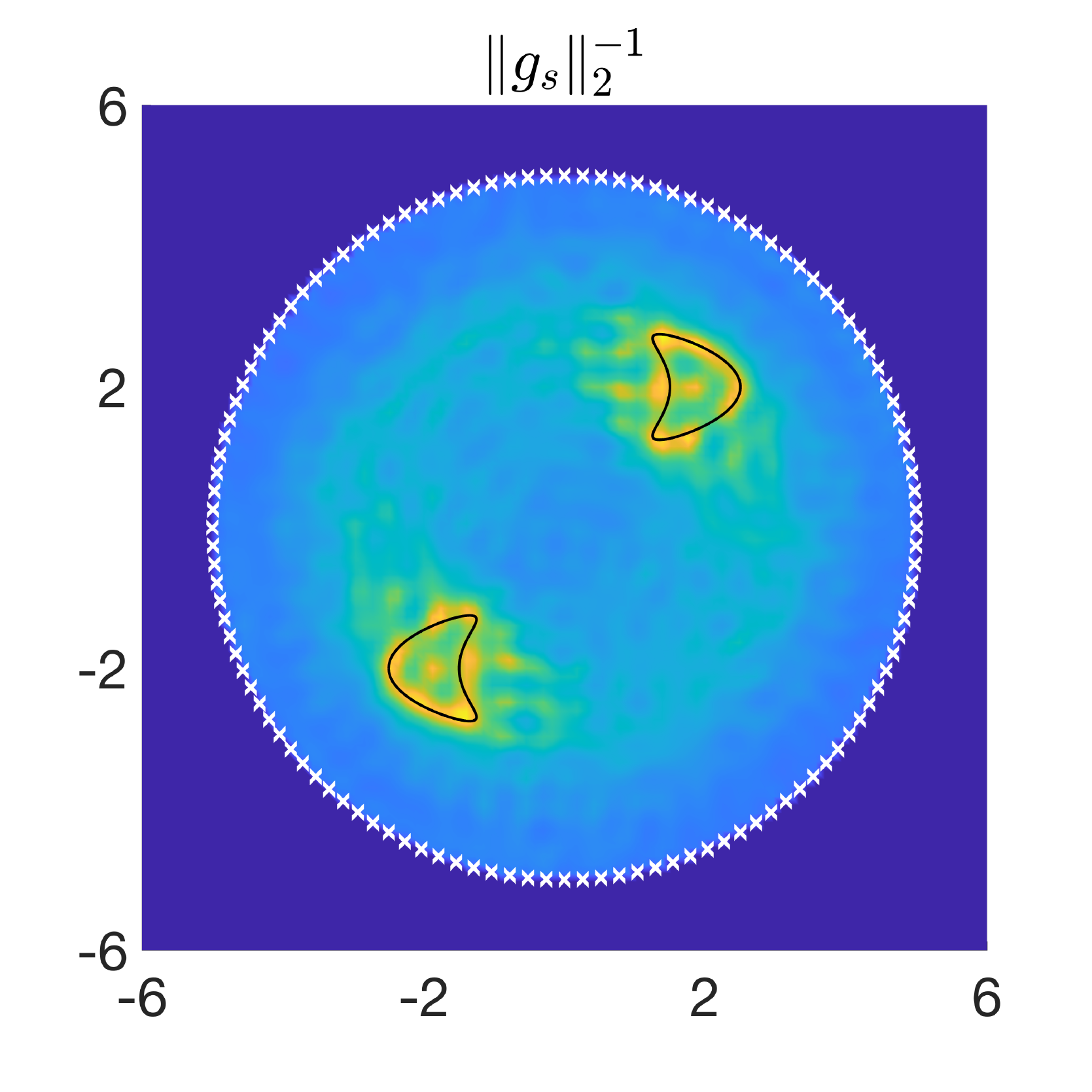}
\caption{Our method can handle several obstacles (here, two kites), as long as they are separated by a few wavelengths. The setup in this figure is the same as in \cref{fig:elli}.}
\label{fig:kite-two}
\end{figure}

We now consider a less favorable but more realistic scenario where the positions of the single small scatterer move according to
\begin{align}\label{eq:scatterer-rand}
\bs{y}_\epsilon^\ell = 100e^{i\theta_y^\ell}, 
\quad 1\leq \ell\leq L,
\end{align}
where the $\theta_y^\ell$'s are independent and identically distributed with the uniform distribution over $(0,2\pi)$. We present the results in \cref{fig:kite-rand} for $J=120$ sensors as described in \cref{eq:sensors}, $L=400$ realizations, and noise level $5\times10^{-3}$; these are not as good as previously. The reason for this is that the sum in \cref{eq:mod-cross-cor-mat} with the quadrature points as given by \cref{eq:scatterer-rand} is a rather poor approximation to the integral in \cref{eq:mod-HK-identity}, the quadrature error being $\OO(1/\sqrt{L})$.

\begin{figure}
\centering
\def\scl{0.25}
\includegraphics[scale=\scl]{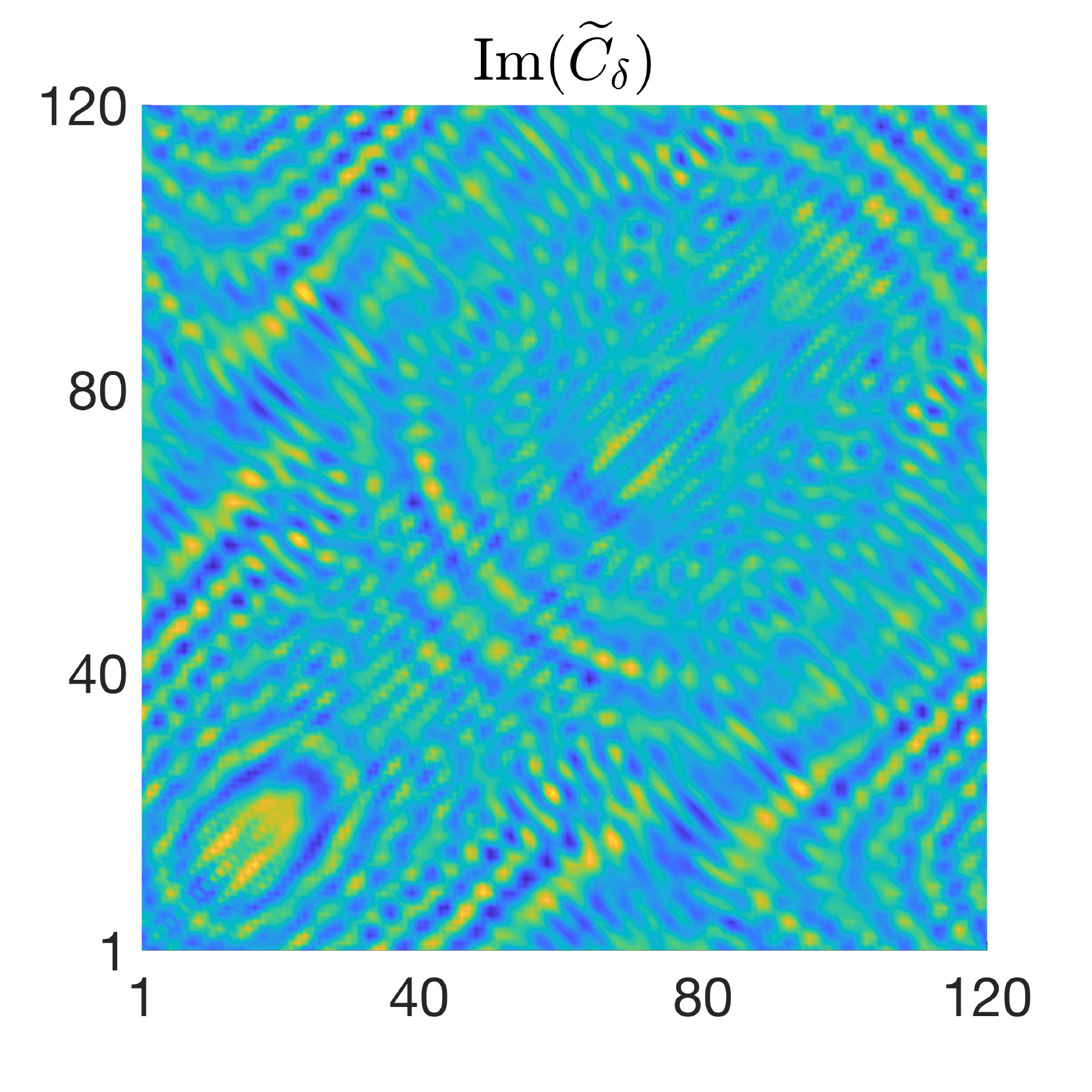}
\includegraphics[scale=\scl]{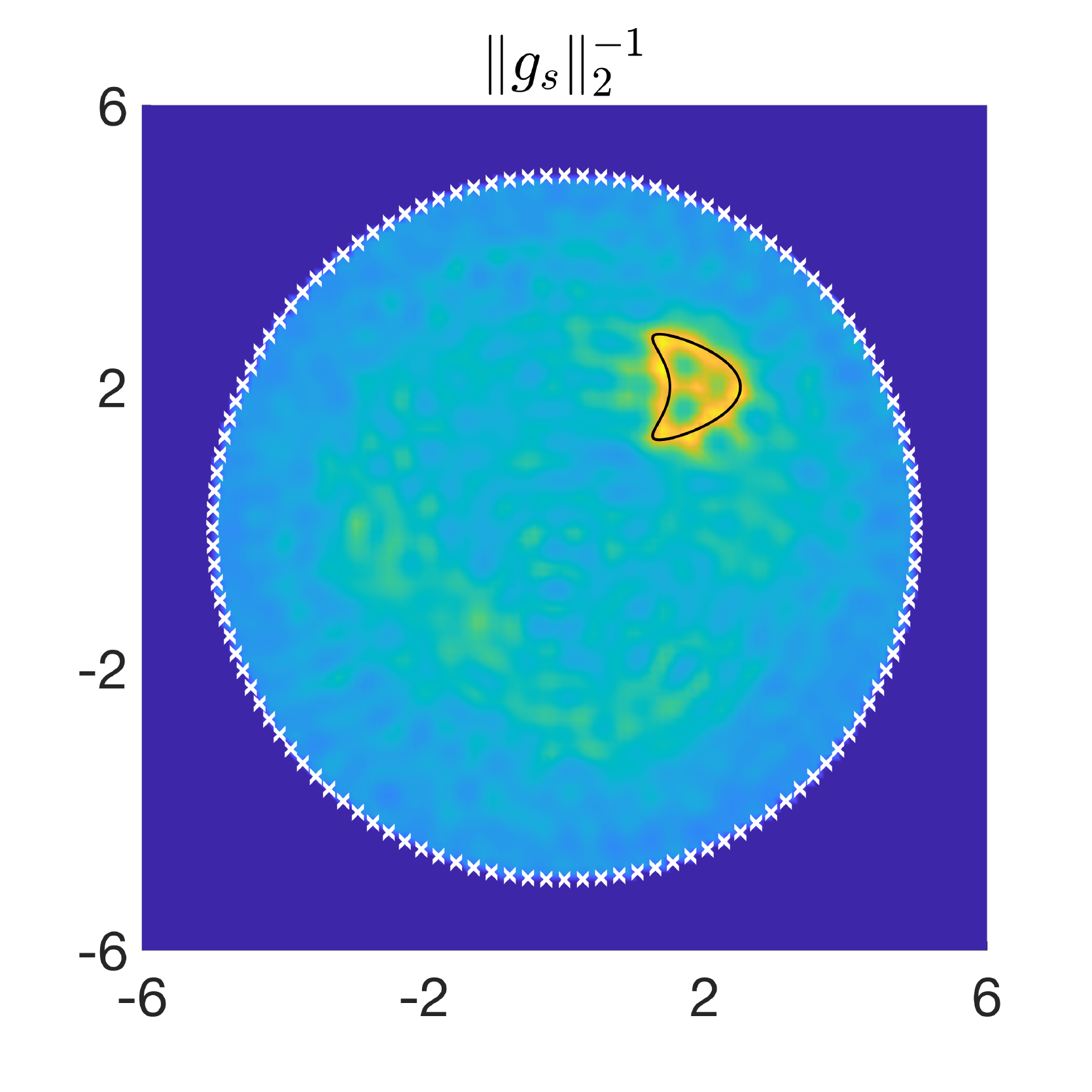}
\caption{When the small scatterer moves randomly around the obstacle according to \cref{eq:scatterer-rand}, the quadrature error associated with our modified cross-correlation matrix is $\OO(1/\sqrt{L})$. It is therefore necessary to increase the number of realizations $L$ from $150$ to $400$ to obtain good numerical results. The entries of the matrix (on the left) are noisy approximations to those of \cref{fig:kite}.}
\label{fig:kite-rand}
\end{figure}

\paragraph{Limited-aperture measurements} In this numerical experiment, we explore measurements with a restricted aperture, as analyzed in \cite{audibert2017}. The outcomes are displayed in \cref{fig:kite-limi} for $J=120$ sensors, $L=150$ different realizations of \cref{eq:scatterer-beta}, and noise amplitude $5\times10^{-3}$. Although this configuration leads to much more challenging reconstructions, our approach based on the modified cross-correlation matrix \cref{eq:mod-cross-cor-mat} and a small random scatterer produces results comparable to those obtained using the cross-correlation matrix \cref{eq:cross-cor-mat} and a random source (see \cite[sect.~5]{montanelli2023}).

\begin{figure}
\centering
\def\scl{0.21}
\includegraphics[scale=\scl]{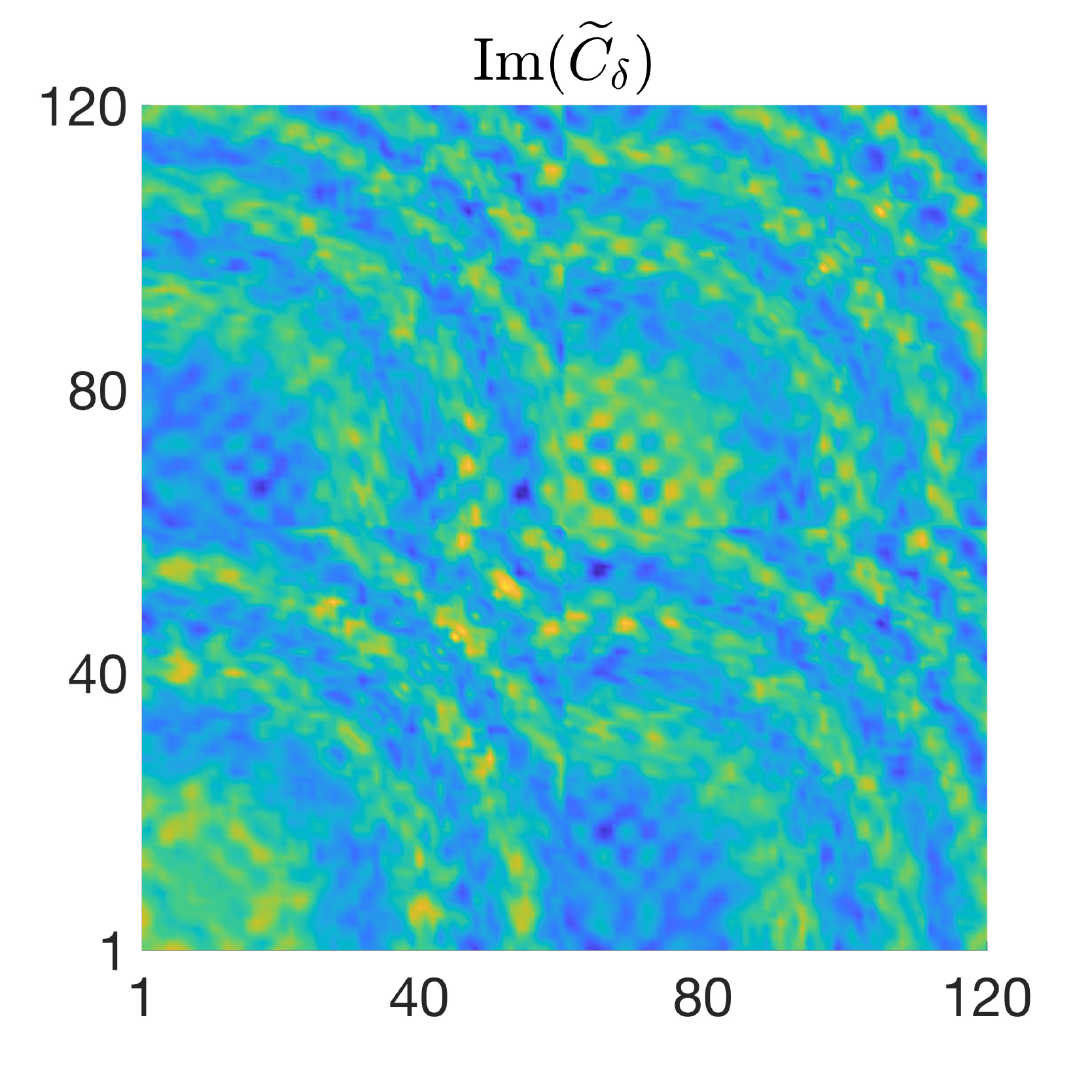}
\includegraphics[scale=\scl]{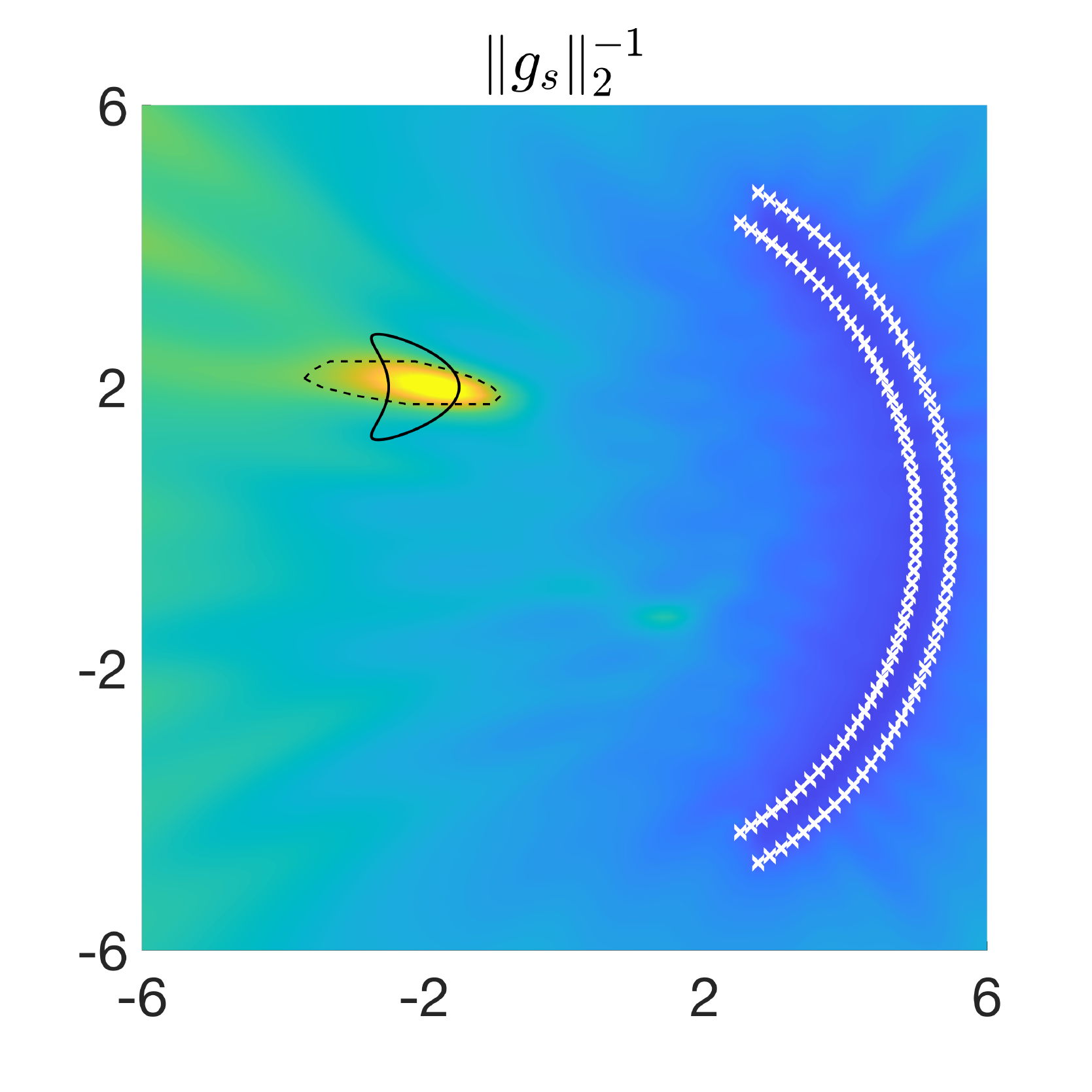}
\includegraphics[scale=\scl]{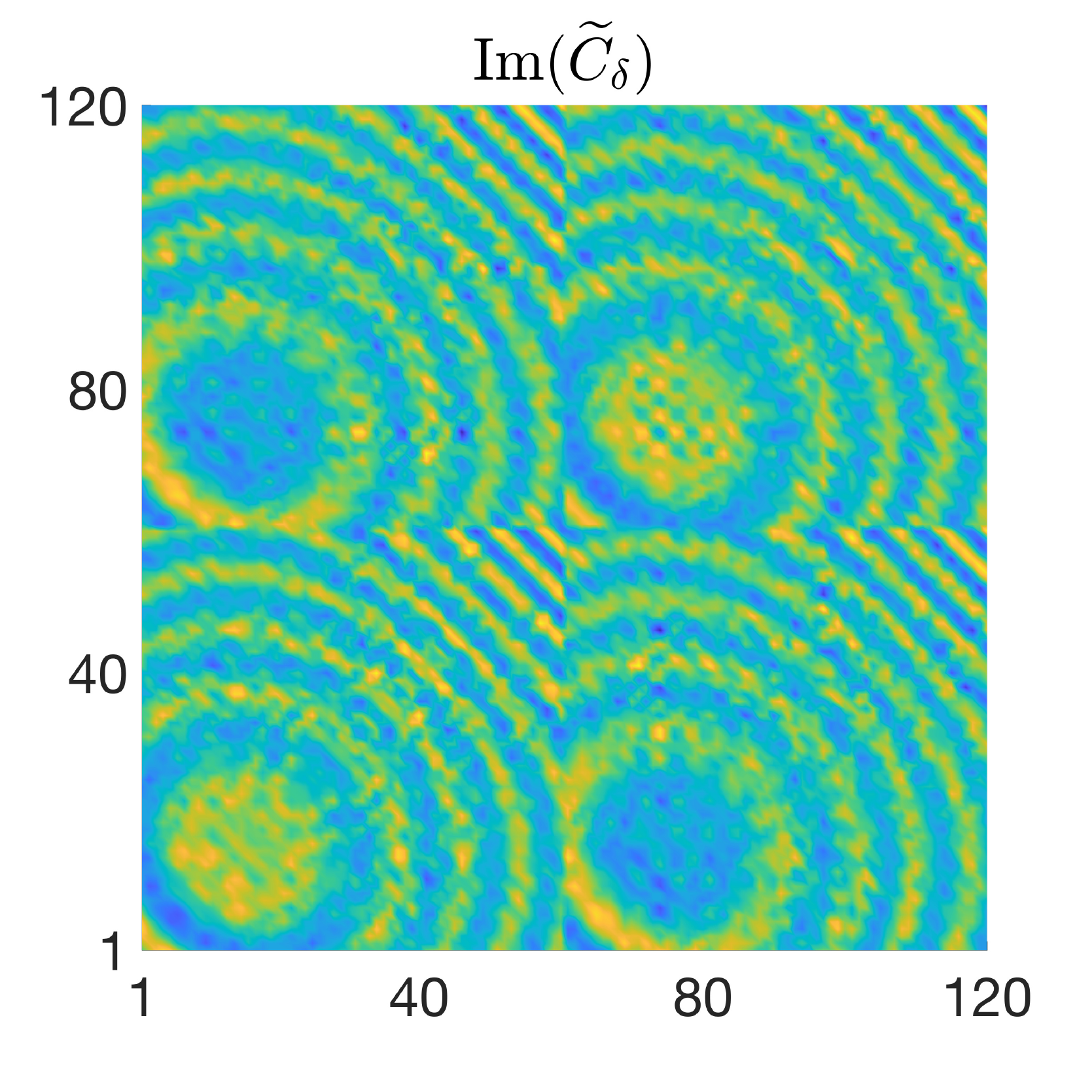}
\includegraphics[scale=\scl]{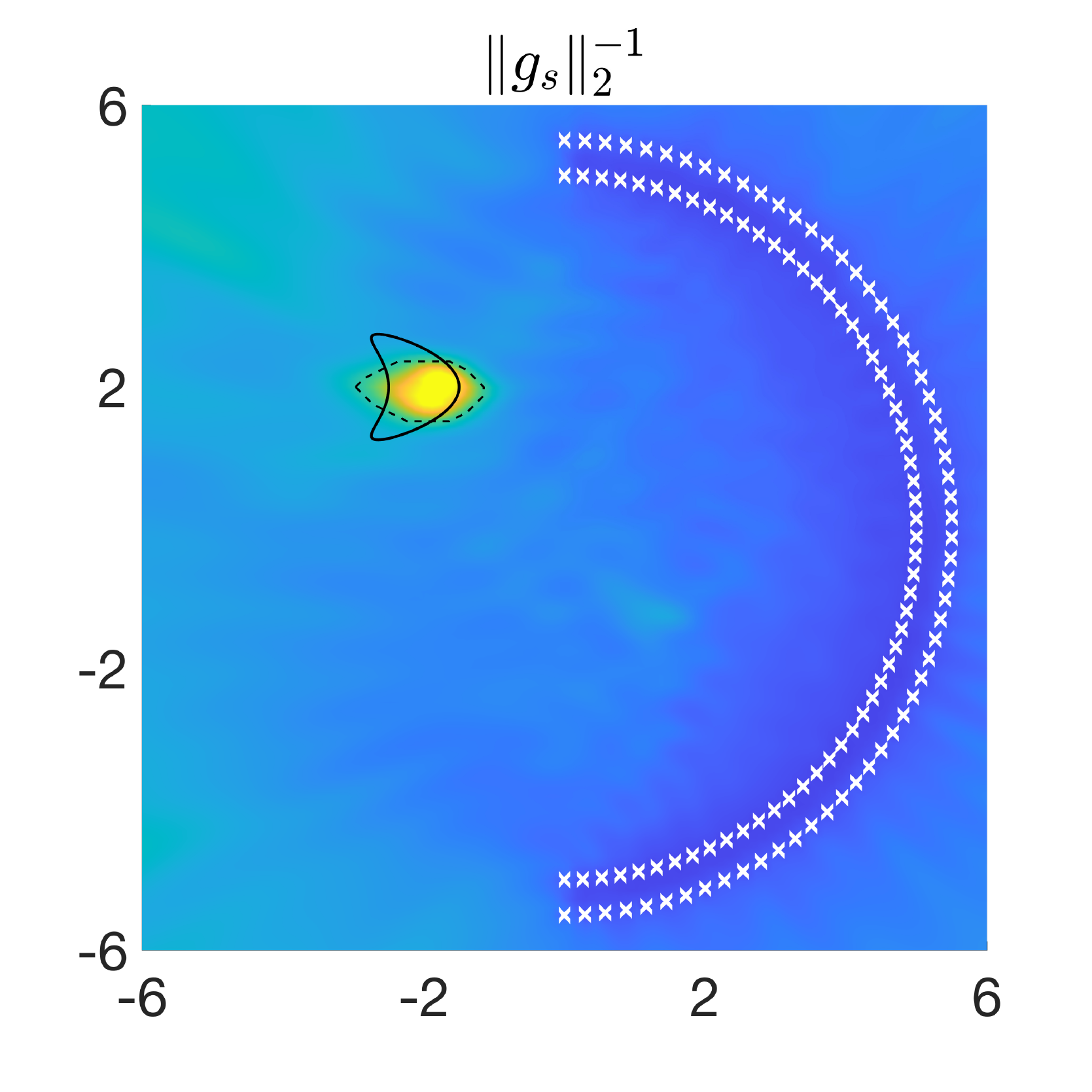}
\includegraphics[scale=\scl]{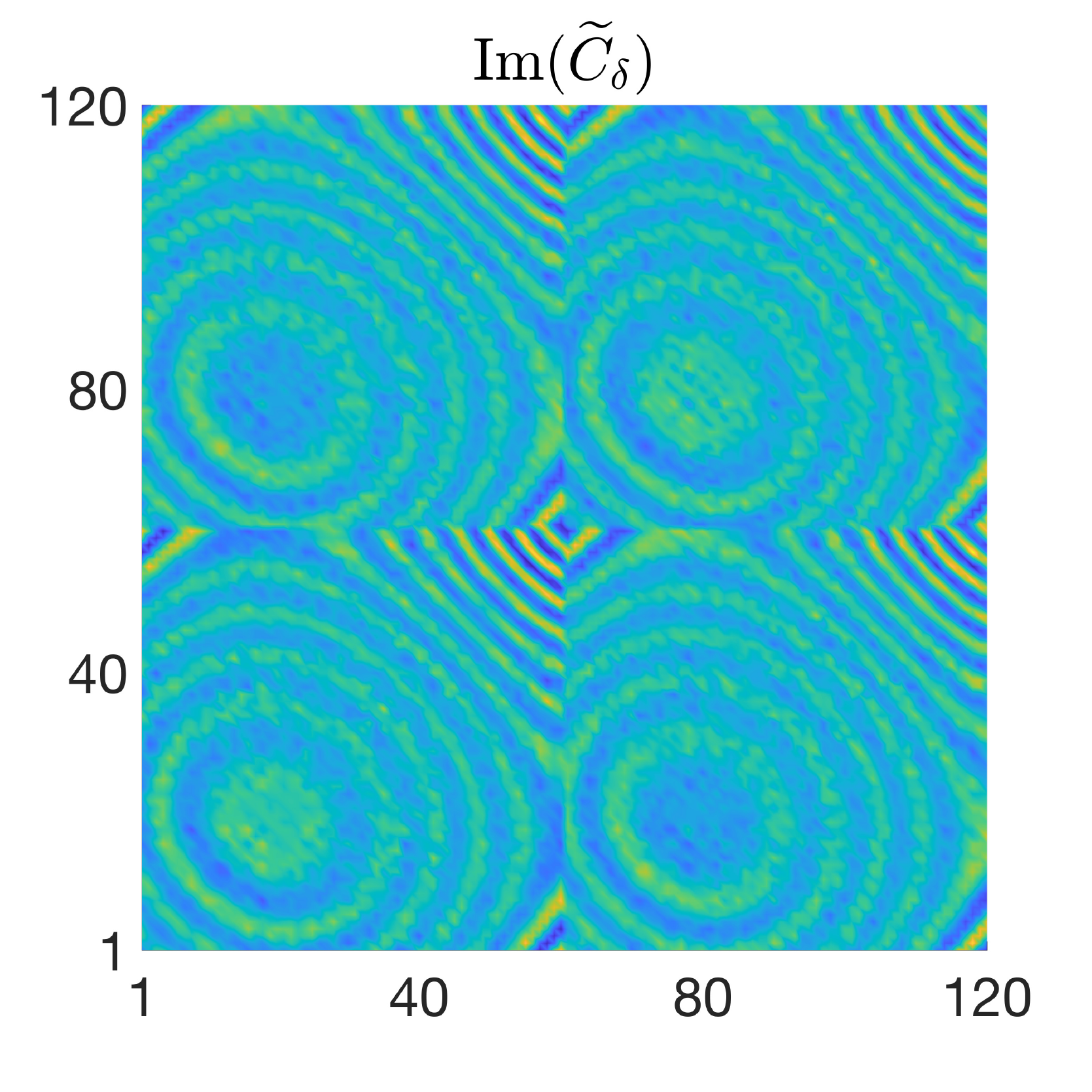}
\includegraphics[scale=\scl]{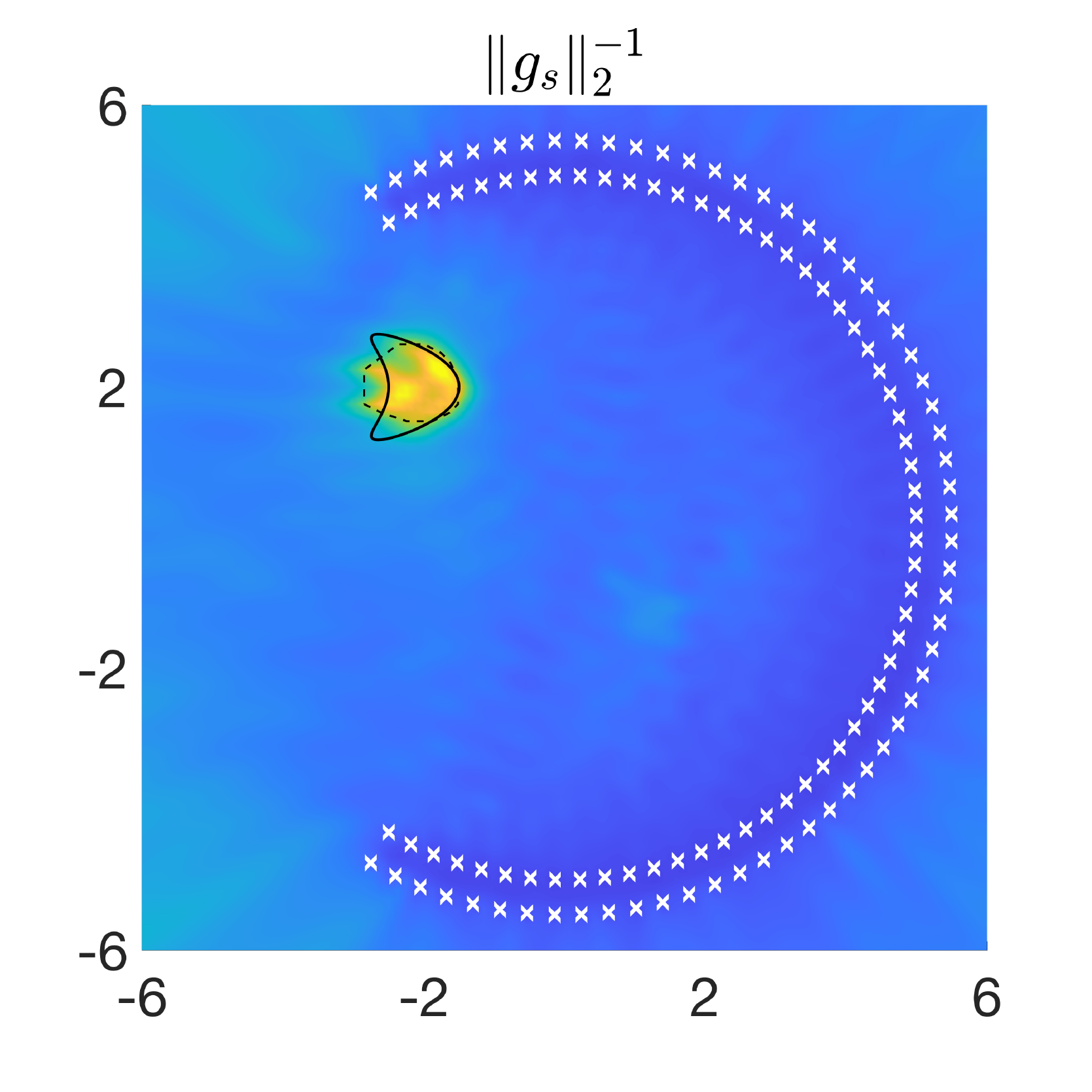}
\caption{In the case of limited-aperture measurements, achieving a perfect reconstruction of the shape is not anticipated. Here, the aperture is $2\pi/3$ in the top row, $\pi$ is the middle row, and $4\pi/3$ in the bottom row. The dotted line corresponds to the convex hull of the level set of the indicator function that minimizes the area error. The error is minimized at $0.54$ (top), $0.45$ (middle), and $0.62$ (bottom) times the maximum of the indicator function, with corresponding errors of $3.51\times10^{-2}$, $2.10\times10^{-2}$, and $3.94\times10^{-3}$, respectively.}
\label{fig:kite-limi}
\end{figure}

\paragraph{Several small scatterers} We now assume that there are $R>1$ several small random scatterers around $D$. At each acquisition $\ell$, the scatterers, indexed by $r$, are located at
\begin{align}
\bs{y}_\epsilon^{\ell,r} = 100e^{i\theta_y^{\ell,r}},
\quad 1\leq \ell\leq L, \quad 1\leq r\leq R,
\end{align}
where the $\theta_y^{\ell,r}$'s are independent and identically distributed with the uniform distribution over $(0,2\pi)$. Let $\bs{Y}_\epsilon^\ell=\{\bs{y}_\epsilon^{\ell,r}\}_{r=1}^R$ denote the positions of all small scatterers at a given acquisition $\ell$ and define
\begin{align}\label{eq:v-several}
\tilde{v}_\epsilon(\bs{x},\bs{Y}_\epsilon^\ell,\bs{z}_\epsilon) = \sum_{r=1}^R\tilde{v}_\epsilon(\bs{x},\bs{y}_\epsilon^{\ell,r},\bs{z}_\epsilon).
\end{align}
In this scenario, the scattered field $w^s_\epsilon$ reads
\begin{align}
w_\epsilon^s(\bs{x},\bs{Y}_\epsilon^\ell,\bs{z}_\epsilon) \approx u^s(\bs{x},\bs{z}_\epsilon) + \tilde{v}_\epsilon(\bs{x},\bs{Y}_\epsilon^\ell,\bs{z}_\epsilon).
\end{align}
We use the same technique to remove $u^s$ and assemble the matrix
\begin{align}\label{eq:mod-cross-cor-mat-R}
\widetilde{C}_{jm}^R = \frac{2ik\vert\Sigma_\epsilon\vert\sigma_\epsilon}{LR}\sum_{\ell=1}^L\overline{\tilde{v}_\epsilon(\bs{x}_j,\bs{Y}_\epsilon^\ell,\bs{z}_\epsilon)}\tilde{v}_\epsilon(\bs{x}_m,\bs{Y}_\epsilon^\ell,\bs{z}_\epsilon) - \left[\phi(\bs{x}_j,\bs{x}_m) - \overline{\phi(\bs{x}_j,\bs{x}_m)}\right].
\end{align}
The results are shown in \cref{fig:kite-several} for $R=5$ (top row) and $R=30$ (bottom row), with $J=120$ sensors described in \cref{eq:sensors}, $L=400$ realizations, and noise levels of $10^{-2}$ (top row) and $5\times10^{-2}$ (bottom row). This increased noise level is manageable because the signal magnitude within \cref{eq:v-several} is boosted by a factor of $R$ compared to previous scenarios. This indicates a significant improvement and opens avenues for future endeavors.

\begin{figure}
\centering
\def\scl{0.25}
\includegraphics[scale=\scl]{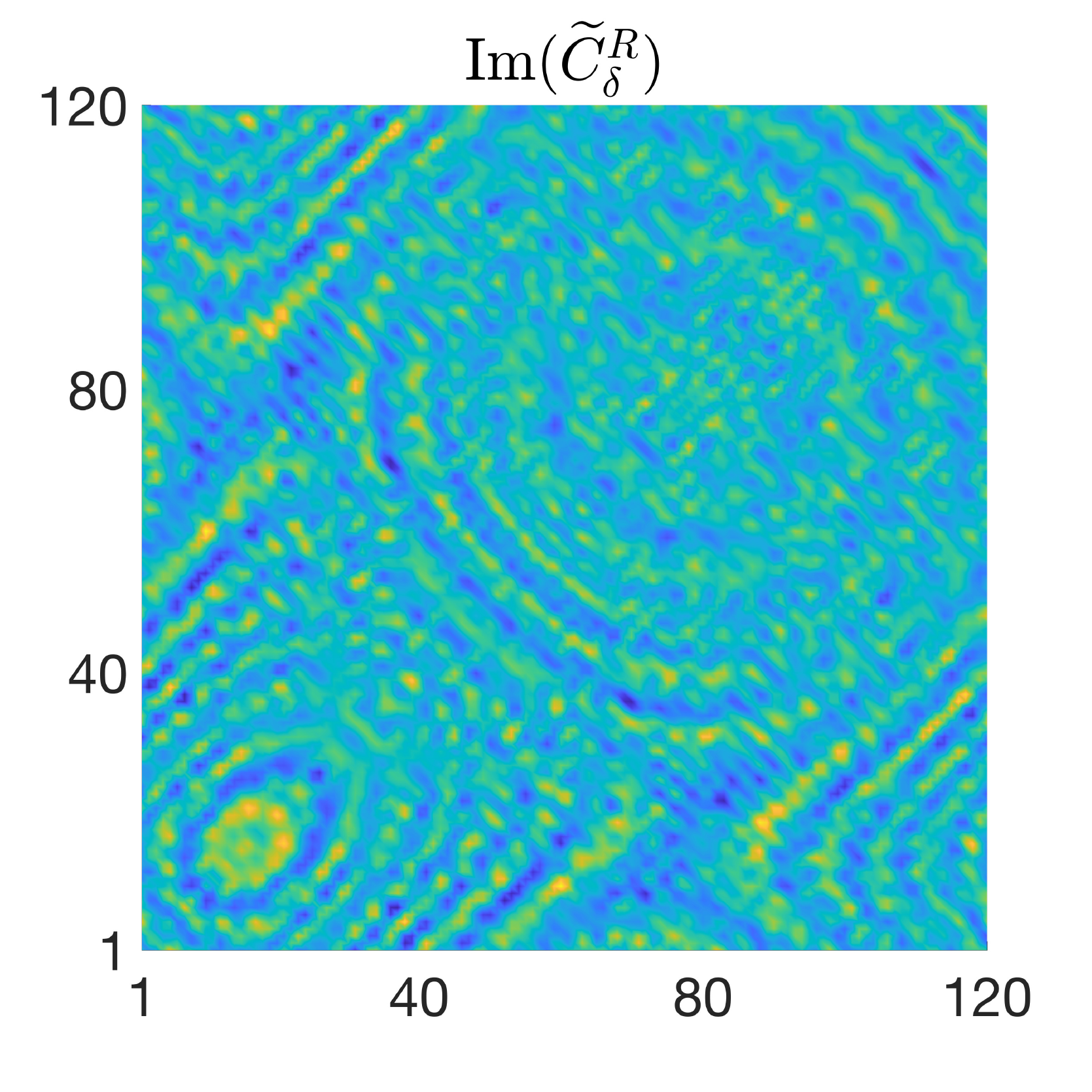}
\includegraphics[scale=\scl]{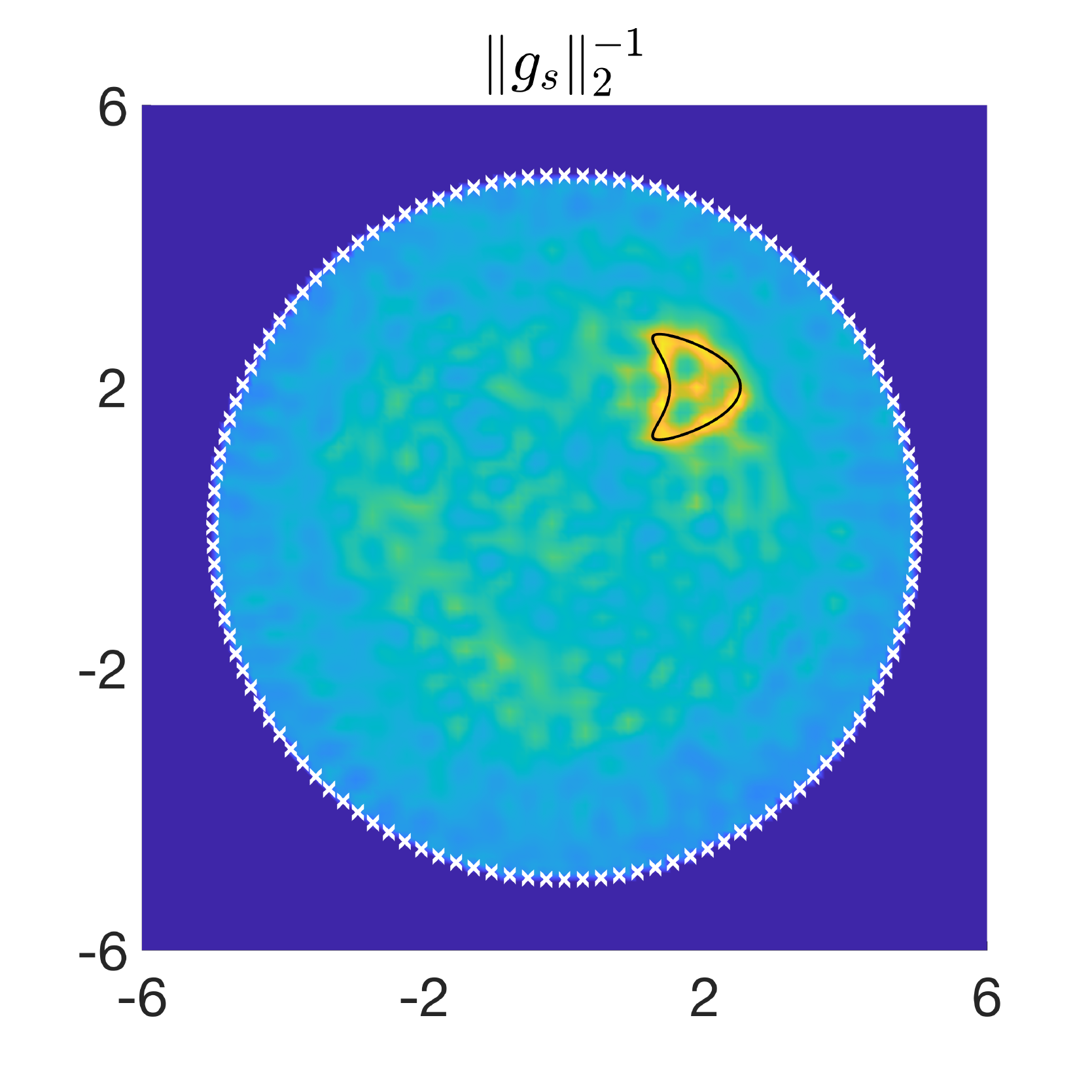}
\includegraphics[scale=\scl]{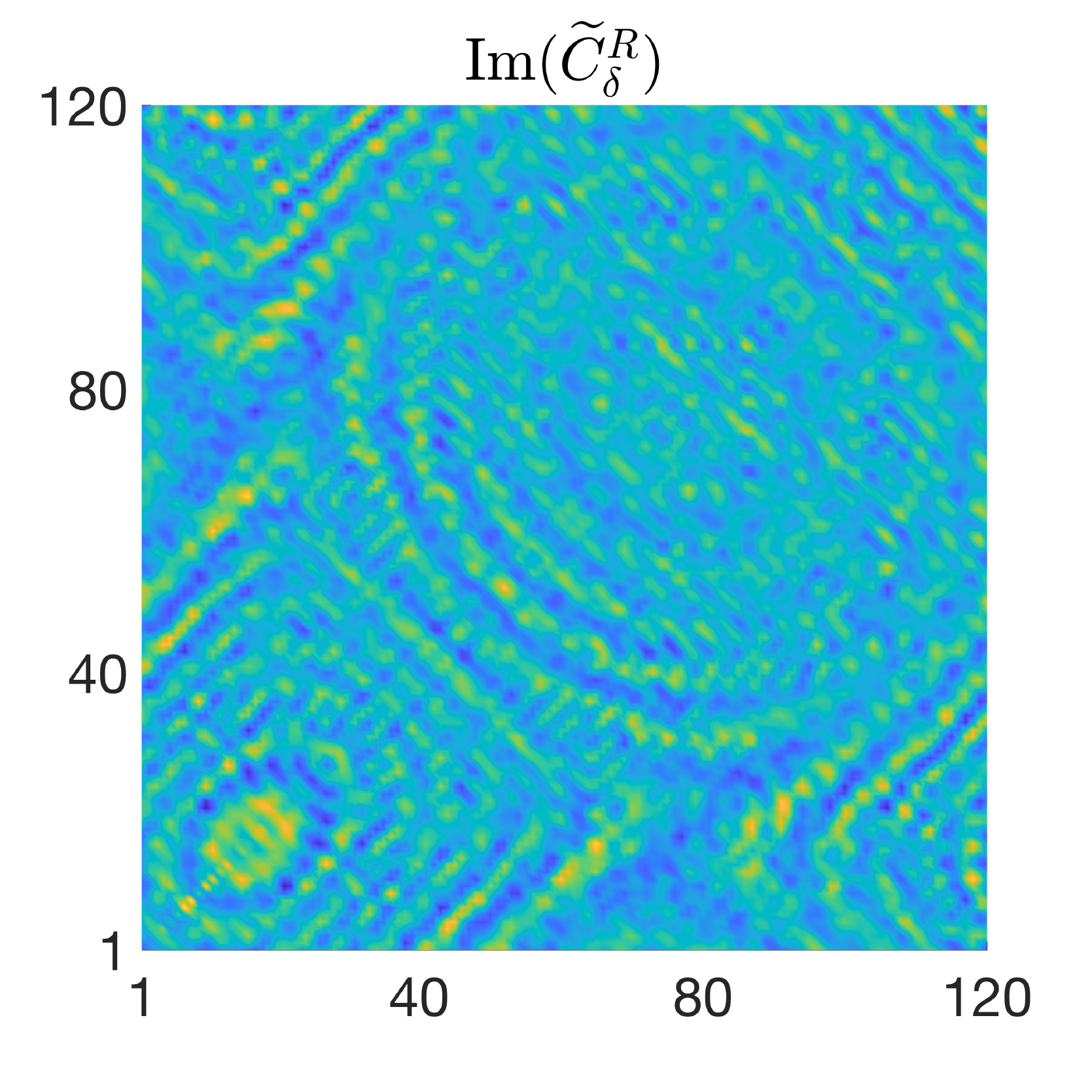}
\includegraphics[scale=\scl]{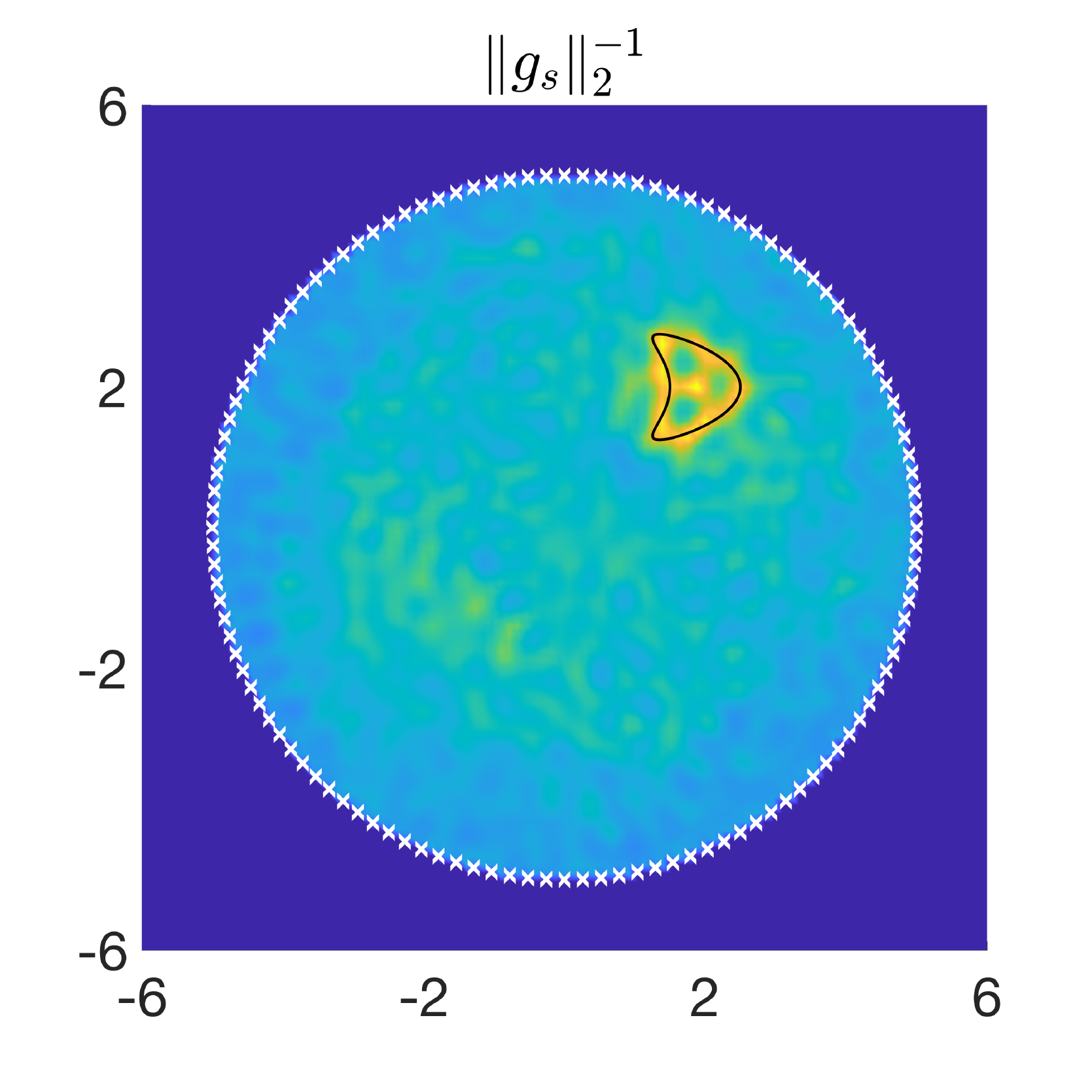}
\caption{When using several small scatterers, the signal is amplified by the number $R$ of scatterers. With $R=5$ random scatterers, we were able to increase the noise level from $5\times10^{-3}$ to $10^{-2}$ (top row). With $R=30$, we managed to raise the noise level to $5\times10^{-2}$ (bottom row).}
\label{fig:kite-several}
\end{figure}

\section{Conclusions}

We have introduced a novel version of the LSM as a powerful tool for addressing the sound-soft inverse scattering problem in two dimensions featuring randomly distributed small scatterers. Our approach is underpinned by a robust theoretical foundation, leveraging a rigorous asymptotic model and a modified Helmholtz--Kirchhoff identity, building upon our prior work on the LSM for random sources \cite{montanelli2023}. The implementation is comprehensive, incorporating essential components such as boundary elements, SVD, Tikhonov regularization, and Morozov's discrepancy principle. Finally, the numerical experiments presented in this paper demonstrate the effectiveness, accuracy, and robustness of our algorithms across various scenarios. The code used for generating the figures in the numerical experiments is available on GitHub at \href{https://github.com/Hadrien-Montanelli/lsmlab}{github.com/Hadrien-Montanelli/lsmlab}.

This study has laid the groundwork for potential extensions and broader applications. The first avenue involves extending the developed framework to three dimensions and exploring its adaptability to different boundary conditions, enhancing the model's versatility. Additionally, expanding the model to encompass a continuous distribution of random scatterers on a surface would move us toward a more realistic configuration. This adjustment might also address the noise issues raised in the numerical experiments, although a detailed analysis of this aspect is beyond the paper's scope. The subsequent progression towards a heightened level of realism involves contemplating random scatterers distributed within a volume. Finally, we would like to extend our sampling method to elastic waves, enabling its application to subsurface imaging.

\section*{Acknowledgments}

We sincerely thank the members of the Inria Idefix and Makutu research teams, in particular Lorenzo Audibert, H\'{e}l\`{e}ne Barucq, and Florian Faucher, for fruitful discussions about inverse scattering problems and subsurface imaging. Florian also provided us with an earlier version of \cref{fig:imaging}. Finally, we extend our gratitude to Maxence Cassier and Christophe Hazard for their insights into the asymptotics for small scatterers.

\appendix
\section{Auxiliary interior problem}\label{sec:appendix-int} Let $D_\epsilon=D_\epsilon(\bs{y}_\epsilon)$ denote the disk of radius $\lambda\epsilon$ centered at $\bs{y}_\epsilon$, as in \cref{sec:asymptotics}. Let $u_\epsilon\in H^1(D_\epsilon)$ be the solution to
\begin{align}
\left\{
\begin{array}{ll}
\Delta u_\epsilon + k^2 u_\epsilon = 0 \quad \text{in $D_\epsilon$}, \\[0.4em]
u_\epsilon = f \quad \text{on $\partial D_\epsilon$},
\end{array}
\right.
\end{align}
for some $f\in H^{1/2}(\partial D_\epsilon)$. The corresponding single layer potential reads
\begin{align}\label{eq:S_eps}
& S_\epsilon:H^{-1/2}(\partial D_\epsilon)\to H^{1/2}(\partial D_\epsilon), \\
& S_\epsilon g(\bs{x}) = \int_{\partial D_\epsilon}\phi(\bs{x},\bs{y})g(\bs{y})ds(\bs{y}), \quad \bs{x}\in \partial D_\epsilon, \nonumber
\end{align}
with $\phi(\bs{x},\bs{y})$ as in \cref{eq:pt-src}. We now write $D_\epsilon = \bs{y}_\epsilon + \lambda\epsilon\widehat{D}$, where $\widehat{D}$ is the unit disk centered at the origin, and define the following change of variables:
\begin{align}
\widehat{\bs{x}} = \frac{\bs{x}-\bs{y}_\epsilon}{\lambda\epsilon}, \quad \widehat{\bs{y}} = \frac{\bs{y}-\bs{y}_\epsilon}{\lambda\epsilon}, \quad \widehat{u}_\epsilon(\widehat{\bs{x}}) = u_\epsilon(\bs{y}_\epsilon+\lambda\epsilon\widehat{\bs{x}}).
\end{align}
Then, $\widehat{u}_\epsilon\in H^1(\widehat{D})$ satisfies
\begin{align}
\left\{
\begin{array}{ll}
\Delta \widehat{u}_\epsilon + 4\pi^2\epsilon^2 \widehat{u}_\epsilon = 0 \quad \text{in $\widehat{D}$}, \\[0.4em]
\widehat{u}_\epsilon = \widehat{f}_\epsilon \quad \text{on $\partial \widehat{D}$},
\end{array}
\right.
\end{align}
with $\widehat{f}_\epsilon(\widehat{\bs{x}}) = f(\bs{y}_\epsilon+\lambda\epsilon\widehat{\bs{x}})\in H^{1/2}(\partial\widehat{D})$. The associated single layer potential reads
\begin{align}\label{eq:Shat_eps}
& \widehat{S}_\epsilon:H^{-1/2}(\partial\widehat{D})\to H^{1/2}(\partial\widehat{D}), \\
& \widehat{S}_\epsilon\widehat{g}(\widehat{\bs{x}}) = \int_{\partial\widehat{D}}\widehat{\phi}_\epsilon(\widehat{\bs{x}},\widehat{\bs{y}})\widehat{g}(\widehat{\bs{y}})ds(\widehat{\bs{y}}), \quad \widehat{\bs{x}}\in \partial\widehat{D}, \nonumber
\end{align}
with 
\begin{align}
\widehat{\phi}_\epsilon(\widehat{\bs{x}},\widehat{\bs{y}}) = \phi(\bs{y}_\epsilon+\lambda\epsilon\widehat{\bs{x}},\bs{y}_\epsilon+\lambda\epsilon\widehat{\bs{y}}) = \frac{i}{4}H_0^{(1)}(2\pi\epsilon\vert\widehat{\bs{x}}-\widehat{\bs{y}}\vert).
\end{align}

On $\partial\widehat{D}$, we can expand any $\widehat{f}\in H^{1/2}(\partial\widehat{D})$ as a Fourier series
\begin{align}
\widehat{f}(\theta) = \frac{1}{\sqrt{2\pi}}\sum_{n=-\infty}^{+\infty}c_n e^{in\theta}, \quad \theta\in[0,2\pi],
\end{align}
with Fourier coefficients
\begin{align}
c_n = \frac{1}{2\pi}\int_0^{2\pi}\widehat{f}(\theta)e^{-in\theta}d\theta,
\end{align}
and define the $1/2$-norm as
\begin{align}
\Vert\widehat{f}\Vert_{H^{1/2}(\partial\widehat{D})}^2 = \sum_{n=-\infty}^{+\infty}\vert c_n\vert^2(1 + n^2)^{1/2}.
\end{align}
Similarly, we can expand any $\widehat{g}\in H^{-1/2}(\partial\widehat{D})$ as a Fourier series
\begin{align}
\widehat{g}(\theta) = \frac{1}{\sqrt{2\pi}}\sum_{n=-\infty}^{+\infty}d_n e^{in\theta}, \quad \theta\in[0,2\pi],
\end{align}
and define the negative $1/2$-norm as
\begin{align}
\Vert\widehat{g}\Vert_{H^{-1/2}(\partial\widehat{D})}^2 = \sum_{n=-\infty}^{+\infty}\vert d_n\vert^2(1 + n^2)^{-1/2}.
\end{align}
Finally, we define the duality pairing between $H^{1/2}(\partial\widehat{D})$ and $H^{-1/2}(\partial\widehat{D})$ via
\begin{align}
\langle\widehat{f},\widehat{g}\rangle = \int_{\partial\widehat{D}}\widehat{f}(\widehat{\bs{y}})\overline{\widehat{g}(\widehat{\bs{y}})}ds(\widehat{\bs{y}}) = \sum_{n=-\infty}^\infty c_nd_n,
\end{align}
for any $\widehat{f}\in H^{1/2}(\partial\widehat{D})$ and $\widehat{g}\in L^2(\partial\widehat{D})$. We extend it to $\widehat{g}\in H^{-1/2}(\partial\widehat{D})$ by density.

On $\partial D_\epsilon$, we define the $1/2$-norm of any $f\in H^{1/2}(\partial D_\epsilon)$ via
\begin{align}
\Vert f\Vert_{H^{1/2}(\partial D_\epsilon)} = \Vert\widehat{f}_\epsilon\Vert_{H^{1/2}(\partial\widehat{D})},
\end{align}
where $\widehat{f}_\epsilon(\widehat{\bs{x}}) = f(\bs{y}_\epsilon+\lambda\epsilon\widehat{\bs{x}})\in H^{1/2}(\partial\widehat{D})$. The duality pairing between the spaces $H^{1/2}(\partial D_\epsilon)$ and $H^{-1/2}(\partial D_\epsilon)$ is then defined via
\begin{align}
\langle f,g\rangle = \int_{\partial D_\epsilon}f(\bs{y})\overline{g(\bs{y})}ds(\bs{y}),
\end{align}
for $f\in H^{1/2}(\partial D_\epsilon)$ and $g\in L^2(\partial D_\epsilon)$. We extend it to $g\in H^{-1/2}(\partial D_\epsilon)$ by density. This allows us to finally define
\begin{align}
\Vert g\Vert_{H^{-1/2}(\partial D_\epsilon)} = \sup_{f\in H^{1/2}(\partial D_\epsilon)\neq0}\frac{\langle f,g\rangle}{\Vert f\Vert_{H^{1/2}(\partial D_\epsilon)}}.
\end{align}
From the definitions above, we note that
\begin{align}
\langle f,g\rangle = \int_{\partial D_\epsilon}f(\bs{y})\overline{g(\bs{y})}ds(\bs{y}) = \lambda\epsilon\int_{\partial\widehat{D}}\widehat{f}_\epsilon(\widehat{\bs{y}})\overline{\widehat{g}_\epsilon(\widehat{\bs{y}})}ds(\widehat{\bs{y}}) = \lambda\epsilon\langle\widehat{f}_\epsilon,\widehat{g}_\epsilon\rangle,
\end{align}
where $\widehat{f}_\epsilon(\widehat{\bs{x}}) = f(\bs{y}_\epsilon+\lambda\epsilon\widehat{\bs{x}})$ and $\widehat{g}_\epsilon(\widehat{\bs{x}}) = g(\bs{y}_\epsilon+\lambda\epsilon\widehat{\bs{x}})$, and hence
\begin{align}
\Vert g\Vert_{H^{-1/2}(\partial D_\epsilon)} = \lambda\epsilon\Vert\widehat{g}_\epsilon\Vert_{H^{-1/2}(\partial\widehat{D})}.
\end{align}

Going back to the potentials, a direct calculation yields, for $g\in H^{-1/2}(\partial D_\epsilon)$,
\begin{align}
\widehat{S_\epsilon g}(\widehat{\bs{x}}) = S_\epsilon g(\bs{y}_\epsilon+\lambda\epsilon\widehat{\bs{x}}) 
& = \int_{\partial D_\epsilon}\phi(\bs{y}_\epsilon+\lambda\epsilon\widehat{\bs{x}},\bs{y})g(\bs{y})ds(\bs{y}), \nonumber \\
& = \lambda\epsilon\int_{\partial\widehat{D}}\phi(\bs{y}_\epsilon+\lambda\epsilon\widehat{\bs{x}},\bs{y}_\epsilon+\lambda\epsilon\widehat{\bs{y}})\widehat{g}_\epsilon(\widehat{\bs{y}})ds(\widehat{\bs{y}}), \\
& = \lambda\epsilon\widehat{S}_\epsilon\widehat{g}_\epsilon(\widehat{\bs{x}}). \nonumber
\end{align}
Finally, suppose that $g_\epsilon=S_\epsilon^{-1}f\in H^{-1/2}(\partial D_\epsilon)$ for some $f\in H^{1/2}(\partial D_\epsilon)$, then
\begin{align}
S_\epsilon g_\epsilon = f \Rightarrow \widehat{S_\epsilon g_\epsilon} = \widehat{f}_\epsilon
\Rightarrow \lambda\epsilon\widehat{S}_\epsilon\widehat{g}_\epsilon = \widehat{f}_\epsilon
\Rightarrow \lambda\epsilon\widehat{g}_\epsilon = \widehat{S}_\epsilon^{-1}\widehat{f}_\epsilon
\Rightarrow \lambda\epsilon\widehat{S_\epsilon^{-1}f} = \widehat{S}_\epsilon^{-1}\widehat{f}_\epsilon.
\end{align}
We summarize all the previous definitions and observations in the following lemma.

\begin{lemma}\label{lem:appendix-int}
For any function $f\in H^{1/2}(\partial D_\epsilon)$ and $g\in H^{-1/2}(\partial D_\epsilon)$, define
\begin{align}
\widehat{f}_\epsilon(\widehat{\bs{x}}) = f(\bs{y}_\epsilon+\lambda\epsilon\widehat{\bs{x}})\in H^{1/2}(\partial\widehat{D}) \quad \text{and} \quad \widehat{g}_\epsilon(\widehat{\bs{x}}) = g(\bs{y}_\epsilon+\lambda\epsilon\widehat{\bs{x}})\in H^{-1/2}(\partial\widehat{D}).
\end{align}
Then
\begin{align}
\Vert f\Vert_{H^{1/2}(\partial D_\epsilon)} = \Vert\widehat{f}_\epsilon\Vert_{H^{1/2}(\partial\widehat{D})} \quad \text{and} \quad \Vert g\Vert_{H^{-1/2}(\partial D_\epsilon)} = \lambda\epsilon\Vert\widehat{g}_\epsilon\Vert_{H^{-1/2}(\partial\widehat{D})},
\end{align}
as well as
\begin{align}
\langle f,g\rangle = \lambda\epsilon\langle\widehat{f}_\epsilon,\widehat{g}_\epsilon\rangle, \quad \widehat{S_\epsilon g} = \lambda\epsilon\widehat{S}_\epsilon\widehat{g}_\epsilon, \quad \text{and} \quad \lambda\epsilon\widehat{S_\epsilon^{-1}f} = \widehat{S}_\epsilon^{-1}\widehat{f}_\epsilon.
\end{align}
\end{lemma}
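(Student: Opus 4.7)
The plan is to verify each of the five identities in turn; all of them are essentially definitional or follow from a single change of variable on $\partial D_\epsilon$, so the work is bookkeeping rather than analysis. The only subtlety is making sure the Jacobian factor $\lambda\epsilon$ appears consistently in the two half-order Sobolev norms and their pairing.

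First I would dispatch the $H^{1/2}$-norm identity: by construction we \emph{defined} $\Vert f\Vert_{H^{1/2}(\partial D_\epsilon)}$ as $\Vert\widehat f_\epsilon\Vert_{H^{1/2}(\partial\widehat D)}$, so nothing is to prove. Next, for the duality pairing, I would write
\begin{align}
\langle f,g\rangle
= \int_{\partial D_\epsilon} f(\bs y)\,\overline{g(\bs y)}\,ds(\bs y)
= \lambda\epsilon\int_{\partial\widehat D} \widehat f_\epsilon(\widehat{\bs y})\,\overline{\widehat g_\epsilon(\widehat{\bs y})}\,ds(\widehat{\bs y})
= \lambda\epsilon\,\langle\widehat f_\epsilon,\widehat g_\epsilon\rangle,
\end{align}
using the parametrization $\bs y = \bs y_\epsilon + \lambda\epsilon\,\widehat{\bs y}$ and the arclength scaling $ds(\bs y)=\lambda\epsilon\,ds(\widehat{\bs y})$; extension from $L^2$ to $H^{-1/2}$ is by density, exactly as in the construction of the pairing itself.

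For the $H^{-1/2}$-norm identity I would combine the two previous facts with the supremum definition of $\Vert g\Vert_{H^{-1/2}(\partial D_\epsilon)}$. Since the map $f\mapsto\widehat f_\epsilon$ is an isometric bijection from $H^{1/2}(\partial D_\epsilon)$ onto $H^{1/2}(\partial\widehat D)$, taking the supremum over $f\neq0$ is equivalent to taking it over $\widehat f_\epsilon\neq0$, and
\begin{align}
\Vert g\Vert_{H^{-1/2}(\partial D_\epsilon)}
=\sup_{\widehat f_\epsilon\neq0}\frac{\lambda\epsilon\,\langle\widehat f_\epsilon,\widehat g_\epsilon\rangle}{\Vert\widehat f_\epsilon\Vert_{H^{1/2}(\partial\widehat D)}}
=\lambda\epsilon\,\Vert\widehat g_\epsilon\Vert_{H^{-1/2}(\partial\widehat D)}.
\end{align}

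Finally, for the two single-layer statements, I would reuse the calculation that appears immediately above the lemma: starting from \cref{eq:S_eps}, substituting $\bs y = \bs y_\epsilon+\lambda\epsilon\,\widehat{\bs y}$ and picking up one factor of $\lambda\epsilon$ from $ds(\bs y)$, one obtains $\widehat{S_\epsilon g}(\widehat{\bs x})=\lambda\epsilon\,\widehat S_\epsilon\widehat g_\epsilon(\widehat{\bs x})$ directly from the definition \cref{eq:Shat_eps} of $\widehat S_\epsilon$. The inverse identity $\lambda\epsilon\,\widehat{S_\epsilon^{-1}f}=\widehat S_\epsilon^{-1}\widehat f_\epsilon$ then follows by setting $g=S_\epsilon^{-1}f$ in the identity just proved and applying $\widehat S_\epsilon^{-1}$ to both sides; this uses invertibility of $S_\epsilon$ (and hence of $\widehat S_\epsilon$), which is guaranteed by the standing assumption that $k^2$ is not a Dirichlet eigenvalue of $-\Delta$ in $D_\epsilon$. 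There is no genuine obstacle here — the whole proof is essentially a tabulation of how a single rescaling $\bs x\mapsto\widehat{\bs x}$ propagates through norms, pairings, and integral operators.
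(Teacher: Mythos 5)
Your proposal is correct and follows essentially the same route as the paper, which likewise treats the lemma as a tabulation of the preceding definitions: the $H^{1/2}$-identity is definitional, the pairing and operator identities come from the change of variables $\bs{y}=\bs{y}_\epsilon+\lambda\epsilon\widehat{\bs{y}}$ with $ds(\bs{y})=\lambda\epsilon\,ds(\widehat{\bs{y}})$, the $H^{-1/2}$-scaling follows from the supremum definition, and the inverse identity is obtained by setting $g_\epsilon=S_\epsilon^{-1}f$ in the direct identity. No gaps.
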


We now prove a result on the Fourier coefficients of functions defined on $\partial D_\epsilon$.

\begin{lemma}\label{lem:appendix-fourier}
Let $f\in H^{1/2}(\partial D_\epsilon)$. Suppose that $f$ can be smoothly extended to a twice continuously differentiable function in a neighborhood $U_\epsilon$ of $D_\epsilon$ and that there exists $r\geq0$ and a constant $C>0$, independent of $\epsilon$, such that for small enough $\epsilon$,
\begin{align}
\sup_{\bs{x}\in U_\epsilon}\left\vert\frac{\partial^{i+j} f}{\partial x^i\partial y^j}(\bs{x})\right\vert \leq C\epsilon^r,
\end{align}
for all integers $i,j\geq0$ such that $i+j\leq 2$. Then, the Fourier coefficients of $\widehat{f}_\epsilon$ verify, for small enough $\epsilon$,
\begin{align}
c_0(\epsilon) = f(\bs{y}_\epsilon) + \OO(\epsilon^{r+2}) \quad \text{and} \quad \vert c_n(\epsilon)\vert \leq C\frac{\epsilon^{r+1}}{\vert n\vert^2} \quad \forall n\neq 0,
\end{align}
where $C>0$ is a constant independent of $n$ and $\epsilon$.
\end{lemma}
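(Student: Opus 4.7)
The starting point is the explicit formula for the Fourier coefficients of $\widehat{f}_\epsilon$ on $\partial\widehat{D}$, namely
\begin{align}
c_n(\epsilon) = \frac{1}{2\pi}\int_0^{2\pi} f\bigl(\bs{y}_\epsilon + \lambda\epsilon e^{i\theta}\bigr)\,e^{-in\theta}\,d\theta.
\end{align}
Both statements of the lemma reduce to estimates on this integral, with the $\epsilon$-dependence coming from two places: the prefactor $\lambda\epsilon$ produced by differentiating $\widehat{f}_\epsilon$ in $\theta$, and the pointwise bound $\OO(\epsilon^r)$ on $f$ and its derivatives in the neighborhood $U_\epsilon$.

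For $n=0$, I would Taylor-expand $f$ around $\bs{y}_\epsilon$ to first order with integral remainder, using the $C^2$ extension to $U_\epsilon$:
\begin{align}
f(\bs{y}_\epsilon + \lambda\epsilon e^{i\theta}) = f(\bs{y}_\epsilon) + \lambda\epsilon\bigl(\cos\theta\,\partial_x f + \sin\theta\,\partial_y f\bigr)(\bs{y}_\epsilon) + R_\epsilon(\theta),
\end{align}
where the hypothesis on the second derivatives gives $|R_\epsilon(\theta)|\leq C(\lambda\epsilon)^2\epsilon^r = \OO(\epsilon^{r+2})$. The crucial observation is that the linear term is odd on the circle, i.e.\ $\int_0^{2\pi}\cos\theta\,d\theta=\int_0^{2\pi}\sin\theta\,d\theta=0$, so it vanishes upon integration, and one recovers $c_0(\epsilon)=f(\bs{y}_\epsilon)+\OO(\epsilon^{r+2})$.

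For $n\neq 0$, the idea is to gain the factor $1/n^2$ by integrating by parts twice in $\theta$:
\begin{align}
c_n(\epsilon) = -\frac{1}{2\pi n^2}\int_0^{2\pi} \frac{d^2}{d\theta^2}\!\bigl[f(\bs{y}_\epsilon + \lambda\epsilon e^{i\theta})\bigr]\,e^{-in\theta}\,d\theta.
\end{align}
Applying the chain rule twice, $\frac{d^2}{d\theta^2}[f(\bs{y}_\epsilon+\lambda\epsilon e^{i\theta})]$ produces a term of order $\lambda\epsilon$ involving first derivatives of $f$ and a term of order $(\lambda\epsilon)^2$ involving second derivatives of $f$. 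Using the hypothesis on the derivatives of $f$, both contributions are bounded by $C\epsilon^{r+1}$ for small enough $\epsilon$ (the linear term in $\epsilon$ dominates the quadratic one). Combining with the $1/n^2$ prefactor yields $|c_n(\epsilon)|\leq C\epsilon^{r+1}/n^2$.

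The proof is essentially bookkeeping; there is no conceptual obstacle. The only mild care needed is to ensure the Taylor remainder can be controlled pointwise on the circle $\partial D_\epsilon\subset U_\epsilon$ uniformly in $\epsilon$, which is guaranteed by the standing hypothesis that the bound on derivatives holds on the full neighborhood $U_\epsilon$ (not just at $\bs{y}_\epsilon$) for all small $\epsilon$.
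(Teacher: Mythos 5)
Your proposal is correct and follows essentially the same route as the paper: a first-order Taylor expansion with $\OO(\epsilon^{r+2})$ remainder for $c_0$ (using that the $\cos\theta$ and $\sin\theta$ terms integrate to zero), and a second-derivative-in-$\theta$ bound of order $\epsilon^{r+1}$ via the chain rule for $n\neq0$. The only cosmetic difference is that you obtain the $1/n^2$ factor by explicit double integration by parts, whereas the paper invokes a bounded-variation estimate $\vert c_n\vert\leq V_\epsilon/(2\pi n^2)$ with $V_\epsilon=\int_0^{2\pi}\vert\widehat{f}_\epsilon''\vert\,d\theta$ --- these are the same bound given the $C^2$ hypothesis.
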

\begin{proof}
Since $f$ is twice continuously differentiable in $U_\epsilon$ and its second derivatives are uniformly bounded by a quantity $\OO(\epsilon^r)$, we have that
\begin{align}
f(\bs{y}_\epsilon + \lambda\epsilon e^{i\theta}) = f(\bs{y}_\epsilon) + \lambda\epsilon\cos\theta\frac{\partial f}{\partial x}(\bs{y}_\epsilon) + \lambda\epsilon\sin\theta\frac{\partial f}{\partial x}(\bs{y}_\epsilon) + \OO(\epsilon^{r+2}),
\end{align}
and hence
\begin{align}
c_0(\epsilon) = \frac{1}{2\pi}\int_0^{2\pi}f(\bs{y}_\epsilon + \lambda\epsilon e^{i\theta})d\theta = f(\bs{y}_\epsilon) + \OO(\epsilon^{r+2}).
\end{align}
Moreover, because $\theta\mapsto\widehat{f}_\epsilon(\theta)=f(\bs{y}_\epsilon + \lambda\epsilon e^{i\theta})$ is twice continuously differentiable, its first derivative has bounded variation 
\begin{align}
V_\epsilon = \int_0^{2\pi}\left\vert\frac{d^2}{d\theta^2}\widehat{f}_\epsilon(\theta)\right\vert d\theta,
\end{align}
and its Fourier coefficients satisfy \cite[Thm.~4.1]{montanelli2015b}
\begin{align}
\vert c_n(\epsilon)\vert \leq \frac{V_\epsilon}{2\pi\vert n\vert^2} \quad \forall n\neq 0.
\end{align}
It suffices to estimate $V_\epsilon$ to conclude. Let
\begin{align}
x_\epsilon(\theta) = \lambda\epsilon^{-p}\cos\theta_y + \lambda\epsilon\cos\theta, \quad y_\epsilon(\theta) = \lambda\epsilon^{-p}\sin\theta_y + \lambda\epsilon\sin\theta, \quad \theta\in[0,2\pi].
\end{align}
Since $\widehat{f}_\epsilon(\theta) = f(x_\epsilon(\theta), y_\epsilon(\theta))$, we have that
\begin{align}
\frac{d}{d\theta}\widehat{f}_\epsilon(\theta) = \lambda\epsilon\left[-\sin\theta\frac{\partial f}{\partial x}(x_\epsilon(\theta), y_\epsilon(\theta)) + \cos\theta\frac{\partial f}{\partial y}(x_\epsilon(\theta), y_\epsilon(\theta))\right]
\end{align}
and
\begin{align}
\frac{d^2}{d\theta^2}\widehat{f}_\epsilon(\theta) = \lambda\epsilon\left[-\cos\theta\frac{\partial f}{\partial x}(x_\epsilon(\theta), y_\epsilon(\theta)) - \sin\theta\frac{\partial f}{\partial y}(x_\epsilon(\theta), y_\epsilon(\theta))\right] + \OO(\epsilon^{r+2}),
\end{align}
since the second derivatives are uniformly bounded by a quantity $\OO(\epsilon^r)$. Therefore, there exists a constant $C>0$, independent of $\epsilon$, such that for small enough $\epsilon$, $V_\epsilon \leq C \epsilon^{r+1}$.
\end{proof}

We note that if we assume that $f$ can be smoothly extended to an infinitely differentiable function and that all of its derivatives are uniformly bounded by a quantity $\OO(\epsilon^r)$, then we can show that $\vert c_n(\epsilon)\vert\leq C\epsilon^{r+1}/\vert n\vert^{1+\nu}$ for all $n\neq0$ and any integer $\nu>0$.

Let $S_\epsilon^{-1}:H^{1/2}(\partial D_\epsilon)\to H^{-1/2}(\partial D_\epsilon)$ and $\widehat{S}_\epsilon^{-1}:H^{1/2}(\partial\widehat{D})\to H^{-1/2}(\partial\widehat{D})$ be the inverse operators of \cref{eq:S_eps} and \cref{eq:Shat_eps}.

\begin{theorem}\label{thm:appendix-int}
If $\widehat{f}$ has Fourier coefficients $c_n$, then $\widehat{S}^{-1}_\epsilon\widehat{f}$ has Fourier coefficients
\begin{align}
d_n = -\frac{2i}{\pi}\frac{c_n}{J_n(2\pi\epsilon)H_n^{(1)}(2\pi\epsilon)}, \quad n\in\Z.
\end{align}
Moreover, there exists a constant $C>0$, independent of $\epsilon$, such that for small enough $\epsilon$,
\begin{align}
\Vert\widehat{S}_\epsilon^{-1}\Vert \leq C \quad \text{and} \quad \Vert S_\epsilon^{-1}\Vert \leq C.
\end{align}
If $\widehat{f}_\epsilon(\widehat{\bs{x}})=f(\bs{y}_\epsilon+\lambda\epsilon\widehat{\bs{x}})$ for some function $f$ that verifies the assumptions of \cref{lem:appendix-fourier}, then there exists a constant $C>0$, independent of $n$ and $\epsilon$, such that for small enough $\epsilon$, 
\begin{align}
\vert d_0(\epsilon)\vert \leq C\vert\log\epsilon\vert^{-1}\vert f(\bs{y}_\epsilon)\vert + \OO(\vert\log\epsilon\vert^{-1}\epsilon^{r+2}) \quad \text{and} \quad \vert d_n(\epsilon)\vert \leq C\frac{\epsilon^{r+1}}{\vert n\vert} \quad \forall n\neq0.
\end{align}
Moreover, there exists a constant $C>0$, independent of $\epsilon$, such that for small enough $\epsilon$,
\begin{align}
\Vert S^{-1}_\epsilon f\Vert_{H^{-1/2}(\partial D_\epsilon)} \leq C\vert\log(\epsilon)\vert^{-1}\vert f(\bs{y}_\epsilon)\vert + \OO(\epsilon^{r+1})
\end{align}
\end{theorem}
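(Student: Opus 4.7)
The plan is to diagonalize $\widehat{S}_\epsilon$ on the unit circle in the Fourier basis, read off the explicit formula for $d_n$ from the eigenvalues, and then exploit small-argument asymptotics of Bessel and Hankel functions together with \cref{lem:appendix-fourier} and \cref{lem:appendix-int} to convert everything into the claimed bounds.

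First, since the kernel $\widehat{\phi}_\epsilon$ on $\partial\widehat{D}\times\partial\widehat{D}$ depends only on the angular difference $\theta_x-\theta_y$, the operator $\widehat{S}_\epsilon$ is a convolution on the circle and therefore acts diagonally on the Fourier basis $\{e^{in\theta}\}_{n\in\Z}$. I would compute the eigenvalues $\lambda_n$ defined by $\widehat{S}_\epsilon e^{in\theta}=\lambda_n e^{in\theta}$ by applying Graf's addition theorem to $H_0^{(1)}(2\pi\epsilon\vert\widehat{\bs{x}}-\widehat{\bs{y}}\vert)$ in the same style as in the proof of \cref{lem:v^i}, then carrying out the angular integration. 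The resulting eigenvalues are the classical ones, $\lambda_n=\tfrac{i\pi}{2}J_n(2\pi\epsilon)H_n^{(1)}(2\pi\epsilon)$, from which the identity $d_n=c_n/\lambda_n$ yields the stated closed form for the Fourier coefficients of $\widehat{S}_\epsilon^{-1}\widehat{f}$.

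Next, for the uniform operator norm bounds, I would analyze $|\lambda_n|$ as $\epsilon\to 0$. The small-argument expansions give $H_0^{(1)}(x)\sim(2i/\pi)\log x$, hence $|\lambda_0|\sim|\log\epsilon|$, while $J_n(x)H_n^{(1)}(x)\sim-i/(\pi|n|)$ for $|n|\geq 1$ gives $|\lambda_n|\sim 1/(2|n|)$. Since $\|\widehat{S}_\epsilon^{-1}\|^2=\sup_n[|\lambda_n|^2(1+n^2)]^{-1}$, these asymptotics show the supremum stays bounded by a constant independent of $\epsilon$ (the $n=0$ term vanishes like $|\log\epsilon|^{-2}$, and the tail tends to the constant $4$). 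The bound on $\|S_\epsilon^{-1}\|$ then follows from \cref{lem:appendix-int}: the scaling $\widehat{S_\epsilon^{-1}f}=(\lambda\epsilon)^{-1}\widehat{S}_\epsilon^{-1}\widehat{f}_\epsilon$ combined with the norm identities $\Vert f\Vert_{H^{1/2}(\partial D_\epsilon)}=\Vert\widehat{f}_\epsilon\Vert_{H^{1/2}(\partial\widehat{D})}$ and $\Vert g\Vert_{H^{-1/2}(\partial D_\epsilon)}=\lambda\epsilon\Vert\widehat{g}_\epsilon\Vert_{H^{-1/2}(\partial\widehat{D})}$ cancel the factor $\lambda\epsilon$, giving $\Vert S_\epsilon^{-1}\Vert=\Vert\widehat{S}_\epsilon^{-1}\Vert$.

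For the refined coefficient-wise bounds under the assumptions of \cref{lem:appendix-fourier}, I would substitute $c_0=f(\bs{y}_\epsilon)+\OO(\epsilon^{r+2})$ and $|c_n|\leq C\epsilon^{r+1}/n^2$ into $d_n=c_n/\lambda_n$: dividing the zero mode by $\lambda_0\sim-\log(2\pi\epsilon)$ produces the factor $|\log\epsilon|^{-1}$, while for $n\neq 0$ the factor $|\lambda_n|^{-1}\sim 2|n|$ cancels one power of $|n|$ in the denominator of $|c_n|$, leaving $|d_n|\leq C\epsilon^{r+1}/|n|$. To obtain the $H^{-1/2}$ bound on $S_\epsilon^{-1}f$, I would sum $|d_n|^2(1+n^2)^{-1/2}$: the zero mode contributes $C|\log\epsilon|^{-2}|f(\bs{y}_\epsilon)|^2$ plus lower-order cross terms, and the tail contributes $\OO(\epsilon^{2(r+1)})$ since $\sum_{n\neq 0}1/(n^2(1+n^2)^{1/2})$ converges. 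Taking square roots (using $\sqrt{a+b}\leq\sqrt{a}+\sqrt{b}$) and invoking the norm identity from \cref{lem:appendix-int} once more yields the stated estimate.

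The main obstacle is making the eigenvalue derivation fully rigorous, since Graf's identity formally requires $\vert\widehat{\bs{x}}\vert\neq\vert\widehat{\bs{y}}\vert$; the value $\lambda_n=\tfrac{i\pi}{2}J_n(2\pi\epsilon)H_n^{(1)}(2\pi\epsilon)$ is a classical fact from the boundary integral theory of the Helmholtz equation on a disk, and it is cleanest to either cite it (e.g.\ from Colton--Kress) or to derive it by solving the interior and exterior Dirichlet problems in polar coordinates and matching via the single-layer jump relation. The remaining computations are then routine, provided one keeps the Bessel asymptotic error terms uniform in $n$.
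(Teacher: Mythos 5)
Your proposal is correct and in substance follows the same path as the paper: express $\widehat{S}_\epsilon^{-1}$ diagonally in the Fourier basis, bound $1/\vert J_n(2\pi\epsilon)H_n^{(1)}(2\pi\epsilon)\vert$ by $C\vert\log\epsilon\vert^{-1}$ for $n=0$ and by $C\vert n\vert$ otherwise (using the uniform large-order asymptotics of \cref{lem:appendix-large-n} for the tail, exactly the point you flag about keeping error terms uniform in $n$), transfer to $S_\epsilon^{-1}$ via the scaling identities of \cref{lem:appendix-int}, and substitute the coefficient bounds of \cref{lem:appendix-fourier} for the refined estimates. The only place you diverge is the derivation of the eigenvalues $\lambda_n=\frac{i\pi}{2}J_n(2\pi\epsilon)H_n^{(1)}(2\pi\epsilon)$: you propose applying Graf's theorem to the kernel, correctly note that this is delicate on the diagonal, and offer as the rigorous alternative the interior/exterior Dirichlet solve matched through the single-layer jump relation --- which is precisely what the paper does, computing $\widehat{S}^{-1}_\epsilon\widehat{f}$ as the jump $[\partial\widehat{u}_\epsilon/\partial n]$ and simplifying with the Wronskian $J_n(x)H_n^{(1)'}(x)-J_n'(x)H_n^{(1)}(x)=2i/(\pi x)$. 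Your packaging of the operator norm as $\sup_n\left[\vert\lambda_n\vert^2(1+n^2)\right]^{-1}$ is a marginally cleaner version of the paper's split of the $H^{-1/2}$ sum into $\vert n\vert<N$ and $\vert n\vert\geq N$, and your final square-root step for the $H^{-1/2}(\partial D_\epsilon)$ bound matches the paper's conclusion; there is no gap.
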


\begin{proof}
Let $\widehat{u}_\epsilon\in H^1(\widehat{D})\cup H^1_{\mrm{loc}}(\R^2\setminus\overline{\widehat{D}})$ be the solution to
\begin{align}
\left\{
\begin{array}{ll}
\Delta \widehat{u}_\epsilon + 4\pi^2\epsilon^2\widehat{u}_\epsilon = 0 \quad \text{in $\R^2\setminus\partial\widehat{D}$}, \\[0.4em]
\widehat{u}_\epsilon = \widehat{f} \quad \text{on $\partial \widehat{D}$}, \\[0.4em]
\text{$\widehat{u}_\epsilon$ is radiating},
\end{array}
\right.
\end{align}
for some $\widehat{f}\in H^{1/2}(\partial \widehat{D})$ with
\begin{align}
\widehat{f}(\theta) = \frac{1}{\sqrt{2\pi}}\sum_{n=-\infty}^{+\infty}c_n e^{in\theta}, \quad \theta\in[0,2\pi].
\end{align}
We look for a solution of the form
\begin{align}
& \widehat{u}_\epsilon(r,\theta) = \sum_{n=-\infty}^{+\infty}\alpha_n(\epsilon)J_n(2\pi\epsilon r) e^{in\theta} \quad \text{in $\widehat{D}$},
\end{align}
and
\begin{align}
\widehat{u}_\epsilon(r,\theta) = \sum_{n=-\infty}^{+\infty}\beta_n(\epsilon)H_n^{(1)}(2\pi\epsilon r) e^{in\theta} \quad \text{in $\R^2\setminus\overline{\widehat{D}}$},
\end{align}
with $\widehat{u}_\epsilon=\widehat{S}_\epsilon\widehat{g}_\epsilon$. The boundary condition $\widehat{u}_\epsilon=\widehat{S}_\epsilon\widehat{g}_\epsilon=\widehat{f}$ on $\partial\widehat{D}$ yields
\begin{align}
& \widehat{u}_\epsilon(r,\theta) = \frac{1}{\sqrt{2\pi}}\sum_{n=-\infty}^{+\infty} \frac{c_n}{J_n(2\pi\epsilon)} J_n(2\pi\epsilon r) e^{in\theta} \quad \text{in $\widehat{D}$},
\end{align}
and
\begin{align}
\widehat{u}_\epsilon(r,\theta) = \frac{1}{\sqrt{2\pi}}\sum_{n=-\infty}^{+\infty} \frac{c_n}{H_n^{(1)}(2\pi\epsilon)} H_n^{(1)}(2\pi\epsilon r) e^{in\theta} \quad \text{in $\R^2\setminus\overline{\widehat{D}}$}.
\end{align}
Note that $\widehat{S}^{-1}_\epsilon:\widehat{f}\to\widehat{S}^{-1}_\epsilon\widehat{f}$ with
\begin{align}
\widehat{S}^{-1}_\epsilon\widehat{f}(\theta) = \left[\frac{\partial\widehat{u}_\epsilon}{\partial n}\right](\theta) = \sum_{n=-\infty}^{+\infty}\sqrt{2\pi}\epsilon c_n\left(\frac{J_n'(2\pi\epsilon)}{J_n(2\pi\epsilon)} - \frac{H_n^{(1)'}(2\pi\epsilon)}{H_n^{(1)}(2\pi\epsilon)}\right)e^{in\theta}, \quad \theta\in[0,2\pi],
\end{align}
where $[\partial\widehat{u}_\epsilon/\partial n]$ denotes the jump of the conormal derivative through $\partial\widehat{D}$. We note that
\begin{align}
\frac{J_n'(2\pi\epsilon)}{J_n(2\pi\epsilon)} - \frac{H_n^{(1)'}(2\pi\epsilon)}{H_n^{(1)}(2\pi\epsilon)} = -\frac{i}{\pi^2\epsilon}\frac{1}{J_n(2\pi\epsilon)H_n^{(1)}(2\pi\epsilon)} \quad \forall n,
\end{align}
using the Wronskian (see, e.g., \cite[eq.~(9.1.16)]{abramowitz1964}). This yields
\begin{align}
\widehat{S}^{-1}_\epsilon\widehat{f}(\theta) = -\frac{1}{\sqrt{2\pi}}\frac{2i}{\pi}\sum_{n=-\infty}^{+\infty}\frac{c_n}{J_n(2\pi\epsilon)H_n^{(1)}(2\pi\epsilon)}e^{in\theta}, \quad \theta\in[0,2\pi],
\end{align}
which shows that the Fourier coefficients of $\widehat{S}^{-1}_\epsilon\widehat{f}$ are given by
\begin{align}
d_n = -\frac{2i}{\pi}\frac{c_n}{J_n(2\pi\epsilon)H_n^{(1)}(2\pi\epsilon)}, \quad n\in\Z.
\end{align}
From the coefficients $d_n$, it is clear that the squared negative $1/2$-norm reads:
\begin{align}
\Vert\widehat{S}^{-1}_\epsilon\widehat{f}\Vert_{H^{-1/2}(\partial\widehat{D})}^2 = \frac{4}{\pi^2}\sum_{n=-\infty}^{+\infty}\frac{\vert c_n\vert^2}{\vert J_n(2\pi\epsilon)H_n^{(1)}(2\pi\epsilon)\vert^2}(1 + n^2)^{-1/2}.
\end{align}
We choose an integer $N$ such that the uniform estimates of \cref{lem:appendix-large-n} hold and split the norm as follows, 
\begin{align}
\Vert\widehat{S}^{-1}_\epsilon\widehat{f}\Vert_{H^{-1/2}(\partial\widehat{D})}^2 = & \; \frac{4}{\pi^2}\sum_{\vert n\vert < N}\frac{\vert c_n\vert^2}{\vert J_n(2\pi\epsilon)H_n^{(1)}(2\pi\epsilon)\vert^2}(1 + n^2)^{-1/2} \\
& \; + \frac{4}{\pi^2}\sum_{\vert n\vert\geq N}\frac{\vert c_n\vert^2}{\vert J_n(2\pi\epsilon)H_n^{(1)}(2\pi\epsilon)\vert^2}(1 + n^2)^{-1/2}. \nonumber
\end{align}
We use the asymptotics for small arguments of \cref{lem:appendix-small-x} for $n=0$,
\begin{align}
\frac{1}{\vert J_0(2\pi\epsilon)H_0^{(1)}(2\pi\epsilon)\vert} \leq C \vert\log \epsilon\vert^{-1} \leq 1,
\end{align}
with a constant $C>0$ independent of $\epsilon$, as well as for $1\leq\vert n\vert<N$,
\begin{align}
\frac{1}{\vert J_n(2\pi\epsilon)H_n^{(1)}(2\pi\epsilon)\vert} \leq C n,
\end{align}
with a constant $C>0$ independent of $n$ and $\epsilon$ (it is the maximum over $1\leq\vert n\vert<N$ of the constants appearing for each $n$). For $\vert n\vert\geq N$, we use the uniform estimates of \cref{lem:appendix-large-n},
\begin{align}
\frac{1}{\vert J_n(2\pi\epsilon)H_n^{(1)}(2\pi\epsilon)\vert} \leq Cn,
\end{align}
with a constant $C>0$ independent of $n$ and $\epsilon$. Putting the pieces together yields
\begin{align}
\Vert\widehat{S}^{-1}_\epsilon\widehat{f}\Vert_{H^{-1/2}(\partial\widehat{D})}^2 \leq C\sum_{n=-\infty}^{+\infty}\vert c_n\vert^2(1 + n^2)^{1/2} = C\Vert f\Vert_{H^{1/2}(\partial\widehat{D})}^2,
\end{align}
that is, $\Vert\widehat{S}_\epsilon^{-1}\Vert\leq C$. For $S_\epsilon^{-1}$, from \cref{lem:appendix-int}, we observe that, for any $f\in H^{1/2}(\partial D_\epsilon)$,
\begin{align}
\Vert S_\epsilon^{-1} f\Vert_{H^{-1/2}(\partial D_\epsilon)} = \lambda\epsilon\Vert \widehat{S_\epsilon^{-1} f}\Vert_{H^{-1/2}(\partial\widehat{D})} = \Vert\widehat{S}_\epsilon^{-1}\widehat{f}_\epsilon\Vert_{H^{-1/2}(\partial\widehat{D})},
\end{align}
with $\widehat{f}_\epsilon(\widehat{\bs{x}})=f(\bs{y}_\epsilon+\lambda\epsilon\widehat{\bs{x}})\in H^{1/2}(\partial\widehat{D})$. This yields 
\begin{align}
\Vert S_\epsilon^{-1} f\Vert_{H^{-1/2}(\partial D_\epsilon)} \leq C\Vert\widehat{f}_\epsilon\Vert_{H^{1/2}(\partial\widehat{D})} = C\Vert f\Vert_{H^{1/2}(\partial D_\epsilon)}.
\end{align}

If $\widehat{f}_\epsilon(\widehat{\bs{x}})=f(\bs{y}_\epsilon+\lambda\epsilon\widehat{\bs{x}})$ for some function $f$ that verifies the assumptions of \cref{lem:appendix-fourier}, then there exists a constant $C>0$, independent of $n$ and $\epsilon$, such that for small enough $\epsilon$, the Fourier coefficients of $\widehat{f}_\epsilon$ satisfy
\begin{align}
c_0(\epsilon) = f(\bs{y}_\epsilon) + \OO(\epsilon^{r+2}) \quad \text{and} \quad \vert c_n(\epsilon)\vert \leq C\frac{\epsilon^{r+1}}{\vert n\vert^2} \quad \forall n\neq 0.
\end{align}
We choose, again, an integer $N$ such that the uniform estimates of \cref{lem:appendix-large-n} hold, and then combine the asymptotics for small arguments of \cref{lem:appendix-small-x} for $n=0$ and $0\neq\vert n\vert\leq N$ with the uniform estimates for $\vert n\vert\geq N$. This yields, for small enough $\epsilon$,
\begin{align}
\vert d_0(\epsilon)\vert \leq C\vert\log\epsilon\vert^{-1}\vert f(\bs{y}_\epsilon)\vert + \OO(\vert\log\epsilon\vert^{-1}\epsilon^{r+2}) \quad \text{and} \quad \vert d_n(\epsilon)\vert \leq C\frac{\epsilon^{r+1}}{\vert n\vert} \quad \forall n\neq0,
\end{align}
with a constant $C>0$ independent of $n$ and $\epsilon$. This immediately implies that
\begin{align}
\Vert S^{-1}_\epsilon f\Vert_{H^{-1/2}(\partial D_\epsilon)} = \Vert\widehat{S}^{-1}_\epsilon\widehat{f}_\epsilon\Vert_{H^{-1/2}(\partial\widehat{D})} \leq C\vert\log(\epsilon)\vert^{-1}\vert f(\bs{y}_\epsilon)\vert + \OO(\epsilon^{r+1}).
\end{align}
\end{proof}

\section{Auxiliary exterior problem}\label{sec:appendix-ext} Let $D_\epsilon=D_\epsilon(\bs{y}_\epsilon)$ and $D=D(\bs{0})$ satisfy the assumptions of \cref{sec:asymptotics}. Let $w_\epsilon\in H^1_\mrm{loc}(\R^s\setminus\{D\cup D_\epsilon\})$ be the solution to
\begin{align}\label{eq:w}
\left\{
\begin{array}{ll}
\Delta w_\epsilon + k^2 w_\epsilon = 0 \quad \text{in $\R^2\setminus\{\overline{D\cup D_\epsilon}\}$}, \\[0.4em]
w_\epsilon = f \quad \text{on $\partial D$}, \\[0.4em]
w_\epsilon = 0 \quad \text{on $\partial D_\epsilon$}, \\[0.4em]
\text{$w_\epsilon$ is radiating},
\end{array}
\right.
\end{align}
for some $f\in H^{1/2}(\partial D)$. We seek to evaluate the solution $w_\epsilon$ in the domain $B$, characterized in \cref{sec:asymptotics}, via $w_\epsilon = \mathscr{S}g_\epsilon + \mathscr{S}_\epsilon h_\epsilon$ with operators
\begin{align}\label{eq:SS}
& \mathscr{S}:H^{-1/2}(\partial D)\to H^1(B), \\
& \mathscr{S}g(\bs{x}) = \int_{\partial D}\phi(\bs{x},\bs{y})g(\bs{y})ds(\bs{y}), \quad \bs{x}\in B, \nonumber
\end{align}
and
\begin{align}\label{eq:SSe}
& \mathscr{S}_\epsilon:H^{-1/2}(\partial D_\epsilon)\to H^1(B), \\
& \mathscr{S}_\epsilon g(\bs{x}) = \int_{\partial D_\epsilon}\phi(\bs{x},\bs{y})g(\bs{y})ds(\bs{y}), \quad \bs{x}\in B. \nonumber
\end{align}
To solve \cref{eq:w}, we must have
\begin{align}
Sg_\epsilon + T_\epsilon h_\epsilon = f \quad \text{on $\partial D$} \quad \text{and} \quad S_\epsilon h_\epsilon +T_\epsilon^Tg_\epsilon = 0 \quad \text{on $\partial D_\epsilon$},
\end{align}
with operators
\begin{align}\label{eq:S}
& S:H^{-1/2}(\partial D)\to H^{1/2}(\partial D), \\
& Sg(\bs{x}) = \int_{\partial D}\phi(\bs{x},\bs{y})g(\bs{y})ds(\bs{y}), \quad \bs{x}\in \partial D, \nonumber
\end{align}
\begin{align}\label{eq:Se}
& S_\epsilon:H^{-1/2}(\partial D_\epsilon)\to H^{1/2}(\partial D_\epsilon), \\
& S_\epsilon g(\bs{x}) = \int_{\partial D_\epsilon}\phi(\bs{x},\bs{y})g(\bs{y})ds(\bs{y}), \quad \bs{x}\in \partial D_\epsilon, \nonumber
\end{align}
\begin{align}\label{eq:Te}
& T_\epsilon:H^{-1/2}(\partial D_\epsilon)\to H^{1/2}(\partial D), \\
& T_\epsilon g(\bs{x}) = \int_{\partial D_\epsilon}g(\bs{x},\bs{y})g(\bs{y})ds(\bs{y}), \quad \bs{x}\in \partial D, \nonumber
\end{align}
and
\begin{align}\label{eq:Tet}
& T_\epsilon^T:H^{-1/2}(\partial D)\to H^{1/2}(\partial D_\epsilon), \\
& T_\epsilon^Tg(\bs{x}) = \int_{\partial D}\phi(\bs{x},\bs{y})g(\bs{y})ds(\bs{y}), \quad \bs{x}\in \partial D_\epsilon. \nonumber
\end{align}
This yields
\begin{align}\label{eq:psi}
g_\epsilon = (I - S^{-1}T_\epsilon S_\epsilon^{-1}T_\epsilon^T)^{-1}S^{-1}f \quad \text{on $\partial D$} \quad \text{and} \quad h_\epsilon=-S_\epsilon^{-1}T_\epsilon^T g_\epsilon \quad \text{on $\partial D_\epsilon$}.
\end{align}
Both $S^{-1}$ and $S_\epsilon^{-1}$ are continuous operators (see \cref{thm:appendix-int} for the latter). Let
\begin{align}\label{eq:Ae}
A_\epsilon=S^{-1}T_\epsilon S_\epsilon^{-1}T_\epsilon^T:H^{-1/2}(\partial D)\to H^{-1/2}(\partial D).
\end{align} 
We characterize, below, the norms of $T_\epsilon$, $T_\epsilon^T$, $A_\epsilon$, $\mathscr{S}$, and $\mathscr{S}_\epsilon$.

\begin{lemma}\label{lem:appendix-ext}
There exists a constant $C>0$, independent of $\epsilon$, such that for small enough $\epsilon$,
\begin{align}
\Vert T_\epsilon\Vert \leq C \epsilon^{p/2}, \;\; \Vert T_\epsilon^T\Vert \leq C \epsilon^{p/2}, \;\; \Vert A_\epsilon\Vert \leq C \epsilon^{p}, \;\; \Vert\mathscr{S}\Vert \leq C, \;\; \text{and} \;\; \Vert \mathscr{S}_\epsilon\Vert \leq C \epsilon^{p/2}.
\end{align}
If $g_\epsilon=S_\epsilon^{-1}f$ for some function $f$ that verifies the assumptions of \cref{lem:appendix-fourier}, then there exists a constant $C>0$, independent of $\epsilon$, such that for small enough $\epsilon$,
\begin{align}
\Vert T_\epsilon g_\epsilon\Vert_{H^{1/2}(\partial D)} \leq C\vert\log\epsilon\vert^{-1}\epsilon^{p/2}\vert f(\bs{y}_\epsilon)\vert + \OO(\epsilon^{p/2}\epsilon^r\epsilon^2)
\end{align}
and
\begin{align}
\Vert\mathscr{S}_\epsilon g_\epsilon\Vert_{H^1(B)} \leq C\vert\log\epsilon\vert^{-1}\epsilon^{p/2}\vert f(\bs{y}_\epsilon)\vert + \OO(\epsilon^{p/2}\epsilon^r\epsilon^2).
\end{align}
\end{lemma}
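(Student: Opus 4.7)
The plan is to treat the five operator-norm estimates uniformly via kernel decay, and then to refine the two bounds on $T_\epsilon g_\epsilon$ and $\mathscr{S}_\epsilon g_\epsilon$ by Taylor-expanding $\phi$ in its inner argument and using the sharp Fourier-coefficient estimates of \cref{thm:appendix-int}.

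For $T_\epsilon$, $T_\epsilon^T$ and $\mathscr{S}_\epsilon$, the key observation is that the relevant source and target sets ($\partial D$, $B$, and $\partial D_\epsilon$) are pairwise separated by distance $\OO(\epsilon^{-p})$. The large-argument asymptotics for Hankel functions (\cref{lem:appendix-large-x}) then give $\phi(\bs{x},\bs{y})=\OO(\epsilon^{p/2})$ uniformly, and differentiation in either argument preserves this order. A standard duality bound such as $|T_\epsilon g(\bs{x})|\leq\Vert\phi(\bs{x},\cdot)\Vert_{H^{1/2}(\partial D_\epsilon)}\Vert g\Vert_{H^{-1/2}(\partial D_\epsilon)}$, combined with the scaling identities of \cref{lem:appendix-int}, yields $\Vert T_\epsilon\Vert$, $\Vert T_\epsilon^T\Vert$, $\Vert\mathscr{S}_\epsilon\Vert\leq C\epsilon^{p/2}$ (in the last case, $\bs{x}$-derivatives are treated in the same way in order to get an $H^1(B)$ estimate). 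The bound $\Vert\mathscr{S}\Vert\leq C$ is the classical continuity of the single-layer potential, since $B$ sits at positive, $\epsilon$-independent distance from $\partial D$. Finally, composing the factors in $A_\epsilon=S^{-1}T_\epsilon S_\epsilon^{-1}T_\epsilon^T$, and using $\Vert S^{-1}\Vert\leq C$ (standard under the non-resonance assumption on $D$) together with $\Vert S_\epsilon^{-1}\Vert\leq C$ from \cref{thm:appendix-int}, gives $\Vert A_\epsilon\Vert\leq C\epsilon^p$.

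For the refined bounds, I substitute $g_\epsilon=S_\epsilon^{-1}f$ and apply the change of variables of \cref{lem:appendix-int} to obtain $T_\epsilon g_\epsilon(\bs{x})=\int_{\partial\widehat{D}}\phi(\bs{x},\bs{y}_\epsilon+\lambda\epsilon\widehat{\bs{y}})(\widehat{S}_\epsilon^{-1}\widehat{f}_\epsilon)(\widehat{\bs{y}})\,ds(\widehat{\bs{y}})$, and similarly for $\mathscr{S}_\epsilon g_\epsilon(\bs{x})$ with $\bs{x}\in B$. I then Taylor-expand the kernel in its inner argument as $\phi(\bs{x},\bs{y}_\epsilon+\lambda\epsilon\widehat{\bs{y}})=\phi(\bs{x},\bs{y}_\epsilon)+\lambda\epsilon\widehat{\bs{y}}\cdot\nabla_{\bs{y}}\phi(\bs{x},\bs{y}_\epsilon)+R(\bs{x},\widehat{\bs{y}})$, where $R=\OO(\epsilon^{2+p/2})$ uniformly in $\widehat{\bs{y}}$, with the same decay after two $\widehat{\bs{y}}$-derivatives. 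The zeroth-order term couples only to the $n=0$ Fourier mode and produces $\sqrt{2\pi}\,\phi(\bs{x},\bs{y}_\epsilon)\,d_0$; inserting $d_0=C|\log\epsilon|^{-1}(f(\bs{y}_\epsilon)+\OO(\epsilon^{r+2}))$ from \cref{thm:appendix-int} together with $|\phi(\bs{x},\bs{y}_\epsilon)|=\OO(\epsilon^{p/2})$ yields the main contribution $C|\log\epsilon|^{-1}\epsilon^{p/2}|f(\bs{y}_\epsilon)|$ plus an $\OO(|\log\epsilon|^{-1}\epsilon^{p/2+r+2})$ correction. The first-order Taylor term pairs with $d_{\pm 1}=\OO(\epsilon^{r+1})$ and carries an $\OO(\epsilon^{p/2+1})$ prefactor, contributing $\OO(\epsilon^{p/2+r+2})$ as desired.

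The main obstacle is the Taylor remainder: by duality on $\partial\widehat{D}$ it is bounded by $\Vert R(\bs{x},\cdot)\Vert_{H^{1/2}(\partial\widehat{D})}\Vert\widehat{S}_\epsilon^{-1}\widehat{f}_\epsilon\Vert_{H^{-1/2}(\partial\widehat{D})}=\OO(\epsilon^{p/2+2})\cdot\OO(|\log\epsilon|^{-1}|f(\bs{y}_\epsilon)|+\epsilon^{r+1})$. The summand involving $|f(\bs{y}_\epsilon)|$ does not fit into the claimed $\OO(\epsilon^{p/2+r+2})$ unless one invokes the $i=j=0$ case of the hypothesis of \cref{lem:appendix-fourier}, which gives the crucial pointwise bound $|f(\bs{y}_\epsilon)|\leq C\epsilon^r$; after this step the remainder becomes $\OO(|\log\epsilon|^{-1}\epsilon^{p/2+r+2})$ and is absorbed. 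To upgrade pointwise-in-$\bs{x}$ estimates to $H^{1/2}(\partial D)$ and $H^1(B)$ estimates, I repeat the argument after differentiating the kernel in $\bs{x}$, whose $\OO(\epsilon^{p/2})$ decay is preserved by differentiation thanks to the large-argument asymptotics of the Hankel functions and their derivatives, which completes the proof.
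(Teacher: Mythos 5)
Your proposal is correct and follows essentially the same route as the paper: kernel decay from the large-argument Hankel asymptotics for the five operator norms, and a duality pairing on the rescaled disk combined with the coefficient estimates of \cref{thm:appendix-int} for the two refined bounds. The only cosmetic difference is that you Taylor-expand the kernel $\phi(\bs{x},\bs{y}_\epsilon+\lambda\epsilon\widehat{\bs{y}})$ directly inside the pairing, whereas the paper applies \cref{lem:appendix-fourier} to $\phi(\bs{x},\cdot)$ (with $r=p/2$) to obtain its Fourier coefficients $c_n$ and then bounds $\sum_n c_n\overline{d_n}$ --- the same computation packaged differently.
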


\begin{proof}
To show that $\Vert T_\epsilon\Vert$ is small, we observe that, via \cref{lem:appendix-large-x}, for $\bs{x}\in\partial D$,
\begin{align}
\vert T_\epsilon g(\bs{x})\vert \leq \sup_{\bs{y}\in\partial D_\epsilon}\vert\phi(\bs{x},\bs{y})\vert\int_{\partial D_\epsilon}\vert g(\bs{y})\vert ds(\bs{y}) \leq C\epsilon^{p/2}\Vert g\Vert_{H^{-1/2}(\partial D_\epsilon)},
\end{align}
which leads to $\Vert T_\epsilon g\Vert_{L^2(\partial D)} \leq C \epsilon^{p/2}$. Similarly, we can write, for $\bs{x}\in\partial D$,
\begin{align}
\vert\nabla(T_\epsilon g)(\bs{x})\vert \leq \sup_{\bs{y}\in\partial D_\epsilon}\vert \nabla_{\bs{x}} \phi(\bs{x},\bs{y})\vert\int_{\partial D_\epsilon}\vert g(\bs{y})\vert ds(\bs{y}) \leq C\epsilon^{p/2}\Vert g\Vert_{H^{-1/2}(\partial D_\epsilon)},
\end{align}
which leads to $\Vert\nabla(T_\epsilon g)\Vert_{L^2(\partial D)} \leq C \epsilon^{p/2}$. This yields, since $T_\epsilon g\in H^s(\partial D)$ for any $s\geq0$,
\begin{align}
\Vert T_\epsilon g\Vert_{H^{1/2}(\partial D)}^2 \leq C \Vert T_\epsilon g\Vert_{H^1(\partial D)}^2 = C \{\Vert T_\epsilon g\Vert_{L^2(\partial D)}^2 + \Vert\nabla(T_\epsilon g)\Vert_{L^2(\partial D)}^2\} \leq C\epsilon^p.
\end{align}
The same reasoning can be applied to $T_\epsilon^T$. This immediately gives a bound on $\Vert A_\epsilon\Vert$.

For $\mathscr{S}$ and $\mathscr{S}_\epsilon$, we obtain the following estimates for $x\in B$,
\begin{align}
\vert \mathscr{S}g(\bs{x})\vert \leq \sup_{\bs{y}\in\partial D}\vert \phi(\bs{x},\bs{y})\vert\int_{\partial D}\vert g(\bs{y})\vert ds(\bs{y}) \leq C\Vert g\Vert_{H^{-1/2}(\partial D)},
\end{align}
and
\begin{align}
\vert \mathscr{S}_\epsilon g(\bs{x})\vert \leq \sup_{\bs{y}\in\partial D_\epsilon}\vert \phi(\bs{x},\bs{y})\vert\int_{\partial D_\epsilon}\vert g(\bs{y})\vert ds(\bs{y}) \leq C\epsilon^{p/2}\Vert g\Vert_{H^{-1/2}(\partial D_\epsilon)},
\end{align}
and similar ones for their gradients.

Let $g_\epsilon=S_\epsilon^{-1}f$ for some function $f$ that verifies the assumptions of \cref{lem:appendix-fourier} and, for $\bs{x}\in \partial D$, let us write $T_\epsilon g_\epsilon(\bs{x})$ as a duality pairing
\begin{align}
T_\epsilon g_\epsilon(\bs{x})=\langle \phi(\bs{x},\cdot),S_\epsilon^{-1}f\rangle
\end{align}
between $\phi(\bs{x},\cdot)\in H^{1/2}(\partial D_\epsilon)$ and $S_\epsilon^{-1}f\in H^{-1/2}(\partial D_\epsilon)$. Then, via \cref{lem:appendix-int},
\begin{align}
T_\epsilon g_\epsilon(\bs{x}) = \lambda\epsilon\langle\widehat{\phi}_\epsilon(\bs{x},\cdot),\widehat{S_\epsilon^{-1}f}\rangle = \langle\widehat{\phi}_\epsilon(\bs{x},\cdot),\widehat{S}_\epsilon^{-1}\widehat{f}_\epsilon\rangle = \sum_{n=-\infty}^\infty c_n\overline{d_n},
\end{align}
where $c_n$ and $d_n$ are the Fourier coefficients of $\widehat{\phi}_\epsilon(\bs{x},\widehat{\bs{y}})=\phi(\bs{x},\bs{y}_\epsilon+\lambda\epsilon\widehat{\bs{y}})$ and $\widehat{S}_\epsilon^{-1}\widehat{f}_\epsilon$. Using \cref{lem:appendix-fourier} for $c_n$ and \cref{thm:appendix-int} and $d_n$, there exists a constant $C>0$, independent of $n$ and $\epsilon$, such that for small enough $\epsilon$,
\begin{align}
& c_0(\epsilon) = \phi(\bs{x},\bs{y}_\epsilon) + \OO(\epsilon^{p/2+2}) = \OO(\epsilon^{p/2}) \quad \text{and} \quad \vert c_n(\epsilon)\vert \leq C\frac{\epsilon^{p/2+1}}{\vert n\vert^2} \quad \forall n\neq 0, \\
& \vert d_0(\epsilon)\vert \leq C\vert\log\epsilon\vert^{-1}\vert f(\bs{y}_\epsilon)\vert + \OO(\vert\log\epsilon\vert^{-1}\epsilon^{r+2}) \quad \text{and} \quad \vert d_n(\epsilon)\vert \leq C\frac{\epsilon^{r+1}}{\vert n\vert} \quad \forall n\neq0. \nonumber
\end{align}
Therefore, we have
\begin{align}
\vert T_\epsilon g_\epsilon(\bs{x})\vert \leq C\vert\log\epsilon\vert^{-1}\epsilon^{p/2}\vert f(\bs{y}_\epsilon)\vert + \OO(\epsilon^{p/2}\epsilon^r\epsilon^2).
\end{align}
We have the same estimate for the gradient and hence
\begin{align}
\Vert T_\epsilon g_\epsilon\Vert_{H^{1/2}(\partial D)} \leq C\vert\log\epsilon\vert^{-1}\epsilon^{p/2}\vert f(\bs{y}_\epsilon)\vert + \OO(\epsilon^{p/2}\epsilon^r\epsilon^2).
\end{align}
The proof for $\Vert\mathscr{S}_\epsilon g_\epsilon\Vert_{H^1(B)}$ is (almost) identical.
\end{proof}

\begin{theorem}\label{thm:appendix-ext}
There exists a constant $C>0$, independent of $\epsilon$, such that for small enough $\epsilon$,
\begin{align}
\Vert w_\epsilon\Vert_{H^1(B)} \leq C \Vert f\Vert_{H^{1/2}(\partial D)}.
\end{align}
\end{theorem}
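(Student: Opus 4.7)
The plan is to exploit the integral representation already set up in Appendix~\ref{sec:appendix-ext}, namely $w_\epsilon = \mathscr{S}g_\epsilon + \mathscr{S}_\epsilon h_\epsilon$ in $B$, and to run the operator-norm estimates of \cref{lem:appendix-ext} through the explicit formula \cref{eq:psi} for $g_\epsilon$ and $h_\epsilon$. Since every constituent norm has already been shown to be uniformly bounded (or uniformly small) in $\epsilon$, the whole proof reduces to assembling these bounds in the right order.

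The first step is to invert $I - A_\epsilon$ on $H^{-1/2}(\partial D)$, where $A_\epsilon = S^{-1} T_\epsilon S_\epsilon^{-1} T_\epsilon^T$. By \cref{lem:appendix-ext} one has $\Vert A_\epsilon\Vert \leq C \epsilon^p$, so for $\epsilon$ small enough we have $\Vert A_\epsilon\Vert \leq 1/2$ and the Neumann series gives $\Vert (I - A_\epsilon)^{-1}\Vert \leq 2$ uniformly in $\epsilon$. Combining this with the (fixed, $\epsilon$-independent) continuity of $S^{-1}:H^{1/2}(\partial D)\to H^{-1/2}(\partial D)$, which is available under the hypothesis that $k^2$ is not a Dirichlet eigenvalue of $-\Delta$ in $D$, the formula $g_\epsilon = (I - A_\epsilon)^{-1} S^{-1} f$ yields
\begin{align}
\Vert g_\epsilon\Vert_{H^{-1/2}(\partial D)} \leq \Vert (I-A_\epsilon)^{-1}\Vert \, \Vert S^{-1}\Vert \, \Vert f\Vert_{H^{1/2}(\partial D)} \leq C\Vert f\Vert_{H^{1/2}(\partial D)}.
\end{align}

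Next, I would estimate $h_\epsilon = -S_\epsilon^{-1} T_\epsilon^T g_\epsilon$. Using $\Vert T_\epsilon^T\Vert \leq C\epsilon^{p/2}$ from \cref{lem:appendix-ext} together with the uniform bound $\Vert S_\epsilon^{-1}\Vert \leq C$ from \cref{thm:appendix-int}, one gets $\Vert h_\epsilon\Vert_{H^{-1/2}(\partial D_\epsilon)} \leq C \epsilon^{p/2} \Vert g_\epsilon\Vert_{H^{-1/2}(\partial D)} \leq C\epsilon^{p/2}\Vert f\Vert_{H^{1/2}(\partial D)}$. Applying $\Vert\mathscr{S}\Vert \leq C$ and $\Vert\mathscr{S}_\epsilon\Vert \leq C\epsilon^{p/2}$ from the same lemma and the triangle inequality yields
\begin{align}
\Vert w_\epsilon\Vert_{H^1(B)} \leq \Vert\mathscr{S}\Vert\,\Vert g_\epsilon\Vert_{H^{-1/2}(\partial D)} + \Vert\mathscr{S}_\epsilon\Vert\,\Vert h_\epsilon\Vert_{H^{-1/2}(\partial D_\epsilon)} \leq C(1 + \epsilon^p)\Vert f\Vert_{H^{1/2}(\partial D)},
\end{align}
which gives the desired bound for $\epsilon$ small enough.

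The one genuinely delicate point is the invertibility of $I - A_\epsilon$; everything else is bookkeeping on already-proved inequalities. The fact that $\Vert A_\epsilon\Vert = \OO(\epsilon^p)$ (and, in particular, that $S_\epsilon^{-1}$ remains uniformly bounded as $\epsilon\to 0$, despite $D_\epsilon$ shrinking) is what forces us to go through the careful Fourier-based analysis of \cref{sec:appendix-int}. Once that ingredient is in hand, the solvability and the stability estimate for \cref{eq:w} follow by a one-paragraph Neumann-series argument plus the continuity of the four potentials $\mathscr{S}$, $\mathscr{S}_\epsilon$, $T_\epsilon$, $T_\epsilon^T$. I would keep in mind that the constant $C$ absorbs $\Vert S^{-1}\Vert$, the Neumann-series bound $2$, and the fixed-geometry constants associated with $\mathscr{S}$ and $D$, all of which are independent of $\epsilon$ by hypothesis.
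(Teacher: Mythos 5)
Your proof is correct and follows essentially the same route as the paper's: the Neumann-series inversion of $I-A_\epsilon$ using $\Vert A_\epsilon\Vert=\OO(\epsilon^p)$, the bounds $\Vert g_\epsilon\Vert_{H^{-1/2}(\partial D)}\leq C\Vert f\Vert_{H^{1/2}(\partial D)}$ and $\Vert h_\epsilon\Vert_{H^{-1/2}(\partial D_\epsilon)}\leq C\epsilon^{p/2}\Vert f\Vert_{H^{1/2}(\partial D)}$, and the triangle inequality with the continuity of $\mathscr{S}$ and $\mathscr{S}_\epsilon$ are exactly the steps in the paper. Your identification of the uniform boundedness of $S_\epsilon^{-1}$ (hence the smallness of $A_\epsilon$) as the one genuinely delicate ingredient is also on point.
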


\begin{proof}
We first write
\begin{align}
\Vert w_\epsilon\Vert_{H^1(B)} \leq \Vert\mathscr{S}g_\epsilon\Vert_{H^1(B)} + \Vert\mathscr{S}_\epsilon h_\epsilon\Vert_{H^1(B)},
\end{align}
and use \cref{lem:appendix-ext} to get
\begin{align}
\Vert w_\epsilon\Vert_{H^1(B)} \leq C\Vert g_\epsilon\Vert_{H^{-1/2}(\partial D)} + C\epsilon^{p/2}\Vert h_\epsilon\Vert_{H^{-1/2}(\partial D_\epsilon)},
\end{align}
with a constant $C>0$ independent of $\epsilon$. From \cref{eq:psi} and \cref{lem:appendix-ext}, we observe that
\begin{align}
\Vert g_\epsilon\Vert_{H^{-1/2}(\partial D)} \leq C\Vert(I - A_\epsilon)^{-1}\Vert\,\Vert S^{-1}\Vert\,\Vert f\Vert_{H^{1/2}(\partial D)} \leq C \Vert f\Vert_{H^{1/2}(\partial D)},
\end{align}
since $\Vert A_\epsilon\Vert\leq C\epsilon^{p/2}\leq1/2$ and $\Vert (I-A_\epsilon)^{-1}\Vert\leq(1-\Vert A_\epsilon\Vert)^{-1}\leq2$ for small enough $\epsilon$, and
\begin{align}
\Vert h_\epsilon\Vert_{H^{-1/2}(\partial D_\epsilon)} \leq C \Vert S_\epsilon^{-1}\Vert \,\Vert T_\epsilon^T\Vert\,\Vert g_\epsilon\Vert_{H^{-1/2}(\partial D)} \leq C\epsilon^{p/2}\Vert f\Vert_{H^{1/2}(\partial D)}.
\end{align}
This yields
\begin{align}
\Vert w_\epsilon\Vert_{H^1(B)} & \leq C\Vert f\Vert_{H^{1/2}(\partial D)} + C\epsilon^p\Vert f\Vert_{H^{1/2}(\partial D)} \leq C\Vert f\Vert_{H^{1/2}(\partial D)}.
\end{align}
\end{proof}

\section{Asymptotic formulas}\label{sec:appendix-asymptotics}

Here, we revisit several helpful asymptotic formulas. Due to the symmetry relations that follow, our focus can be solely on positive integers $n\in\N$.

\begin{lemma}[Symmetry relations]\label{lem:appendix-sym}
For all $n\in\N$,
\begin{align}
J_{-n}(x) = (-1)^nJ_n(x) \quad \forall x\in\R \quad \text{and} \quad H_{-n}^{(1)}(x) = (-1)^nH_n^{(1)}(x) \quad \forall x\in(0,+\infty).
\end{align}
\end{lemma}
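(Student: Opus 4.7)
The plan is to establish the $J_n$ identity directly from the power series definition, and then bootstrap to $H_n^{(1)}$ via the standard decomposition $H_n^{(1)}=J_n+iY_n$. Starting from
\[
J_\nu(x)=\sum_{m=0}^{\infty}\frac{(-1)^m}{m!\,\Gamma(m+\nu+1)}\left(\frac{x}{2}\right)^{2m+\nu},
\]
which is absolutely convergent for all $x\in\R$ and all $\nu\in\C$, I would set $\nu=-n$ for a fixed positive integer $n$. Since $1/\Gamma$ vanishes at non-positive integers, the first $n$ summands disappear, and a reindexing $m\mapsto m+n$ in the remaining series reproduces $(-1)^n$ times the series for $J_n(x)$. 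This yields the first identity for all $x\in\R$.

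For the Hankel function, I would use the decomposition $H_n^{(1)}(x)=J_n(x)+iY_n(x)$, valid on $(0,+\infty)$, together with the analogous symmetry $Y_{-n}(x)=(-1)^nY_n(x)$ for integer $n$. The latter is a classical consequence of the defining formula $Y_\nu=(J_\nu\cos(\nu\pi)-J_{-\nu})/\sin(\nu\pi)$ for non-integer $\nu$, extended to integer order through the standard limit $Y_n(x)=\lim_{\nu\to n}Y_\nu(x)$; see, e.g., \cite{abramowitz1964,lebedev1972}. Combining the $J_n$ and $Y_n$ symmetries gives
\[
H_{-n}^{(1)}(x)=J_{-n}(x)+iY_{-n}(x)=(-1)^n(J_n(x)+iY_n(x))=(-1)^n H_n^{(1)}(x),
\]
which is the second identity.

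I do not anticipate any genuine obstacle, as the argument is entirely classical. The only points requiring a little care are the correct handling of the vanishing reciprocal Gamma factors when reindexing the $J_{-n}$ series, and the justification of the integer-order symmetry of $Y_n$, which follows either by passing to the limit $\nu\to n$ in the non-integer identity $Y_{-\nu}(x)=\cos(\nu\pi)Y_\nu(x)-\sin(\nu\pi)J_\nu(x)$ or by direct appeal to a standard table of Bessel function identities.
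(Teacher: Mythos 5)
Your proof is correct. The paper itself offers no proof of this lemma---it is stated as a classical fact, in the same spirit as the neighboring asymptotic lemmas that simply cite \cite{abramowitz1964}---so there is nothing to compare against; your argument (killing the first $n$ terms of the $J_{-n}$ series via the zeros of $1/\Gamma$ at non-positive integers, reindexing, and then transferring the symmetry to $H_n^{(1)}=J_n+iY_n$ through $Y_{-n}=(-1)^nY_n$) is the standard one and is complete. A marginally shorter route for the second identity is to invoke the connection formula $H^{(1)}_{-\nu}(x)=e^{i\nu\pi}H^{(1)}_\nu(x)$ (see \cite[eq.~(9.1.6)]{abramowitz1964}) and specialize to $\nu=n$, which avoids discussing the limit definition of $Y_n$ at integer order, but your version is equally valid.
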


\begin{lemma}[Asymptotics for small argument]\label{lem:appendix-small-x} 
When $n\in\N$ is fixed and $x\to0^+$:
\begin{align}
& J_n(x) = \frac{1}{n!}\left(\frac{x}{2}\right)^n + \OO(x^{n+1}) \quad \forall n\neq 0, \quad J_0(x) = 1 + \OO(x^2), \label{eq:small-x-Jn} \\
& H_n(x) = \frac{(n-1)!}{i\pi}\left(\frac{2}{x}\right)^{n} + \OO\left(\frac{1}{x^{n-1}}\right) \quad \forall n\neq 0, \quad H_0(x) = \frac{2i}{\pi}\log\left(\frac{x}{2}\right) + \OO(1). \label{eq:small-x-Hn}
\end{align}
\end{lemma}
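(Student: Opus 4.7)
The plan is to derive both estimates directly from the well-known power-series representations of $J_n$ and $Y_n$ and then use the definition $H_n^{(1)}=J_n+iY_n$ to pass to the Hankel function. No delicate analysis is needed; the content is purely bookkeeping on classical series, so the main task is simply to isolate the leading term and to control the tail by a geometric majorant in order to identify the correct power of $x$ in the remainder.

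For $J_n$, I would start from the absolutely convergent expansion
\begin{align*}
J_n(x) = \sum_{m=0}^{\infty}\frac{(-1)^m}{m!(n+m)!}\left(\frac{x}{2}\right)^{n+2m},
\end{align*}
see e.g.\ \cite[eq.~(9.1.10)]{abramowitz1964}. Splitting off the $m=0$ term gives $J_n(x)=\frac{1}{n!}(x/2)^n+R_n(x)$. Bounding $|R_n(x)|$ term by term and summing a geometric series in $(x/2)^2$ for $x$ in a fixed small neighborhood of $0$ yields $R_n(x)=\mathcal{O}(x^{n+2})\subset \mathcal{O}(x^{n+1})$ when $n\neq 0$; the same argument with $n=0$ gives $J_0(x)=1+\mathcal{O}(x^2)$.

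For $Y_n$, I would recall the series
\begin{align*}
Y_n(x) = -\frac{1}{\pi}\sum_{m=0}^{n-1}\frac{(n-m-1)!}{m!}\left(\frac{2}{x}\right)^{n-2m} + \frac{2}{\pi}\log\!\left(\frac{x}{2}\right)J_n(x) + \widetilde R_n(x), \quad n\geq 1,
\end{align*}
where $\widetilde R_n(x)$ collects the nonnegative powers of $x$ (with logarithmic prefactors), see \cite[eq.~(9.1.11)]{abramowitz1964}. The dominant singular term as $x\to 0^+$ is the $m=0$ contribution $-\frac{(n-1)!}{\pi}(2/x)^n$, while the next singular term is of order $1/x^{n-2}$, and $\log(x/2)J_n(x)=\mathcal{O}(x^n\log x)=\mathcal{O}(1/x^{n-1})$. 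Summing these contributions gives $Y_n(x)=-\frac{(n-1)!}{\pi}(2/x)^n+\mathcal{O}(1/x^{n-1})$ for $n\neq 0$. For the $n=0$ case, the analogous series \cite[eq.~(9.1.13)]{abramowitz1964} gives directly $Y_0(x)=\frac{2}{\pi}\log(x/2)+\mathcal{O}(1)$.

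The conclusion then follows by writing $H_n^{(1)}=J_n+iY_n$ and noticing that for $n\neq 0$ the dominant contribution comes from $iY_n$, since $J_n(x)=\mathcal{O}(x^n)=\mathcal{O}(1/x^{n-1})$ is absorbed into the remainder; using $-i=1/i$ to rewrite $-i(n-1)!/\pi=(n-1)!/(i\pi)$ yields \cref{eq:small-x-Hn}. For $n=0$, $J_0(x)=\mathcal{O}(1)$ is absorbed into the $\mathcal{O}(1)$ remainder, giving $H_0^{(1)}(x)=\frac{2i}{\pi}\log(x/2)+\mathcal{O}(1)$. The only mild obstacle is making sure the remainders are tracked at exactly one power above leading order for $J_n$ and one power below leading order for $H_n^{(1)}$, which is achieved by the explicit geometric bounds described above.
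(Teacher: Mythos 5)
Your proposal is correct. Note, however, that the paper does not actually prove \cref{lem:appendix-small-x}: its entire proof is a citation to \cite[eqs.~(9.1.7)--(9.1.9)]{abramowitz1964}, which are precisely the small-argument limiting forms being asserted. What you do instead is derive those forms from the underlying power series (9.1.10), (9.1.11), (9.1.13), which is a legitimate, self-contained alternative, and your bookkeeping is sound: splitting off the $m=0$ term of the $J_n$ series and majorizing the tail geometrically gives a remainder $\OO(x^{n+2})$, which is even sharper than the stated $\OO(x^{n+1})$; for $Y_n$ the $m=1$ term of the finite sum is $\OO(1/x^{n-2})$, the term $\log(x/2)J_n(x)$ is $\OO(x^n\log x)$, and the regular part is $\OO(x^n)$, all of which are indeed $\OO(1/x^{n-1})$ as $x\to0^+$ for $n\geq1$; and passing to $H_n^{(1)}=J_n+iY_n$ with $-i=1/i$ gives exactly the stated constant $(n-1)!/(i\pi)$. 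The only thing your write-up buys over the paper's one-line citation is transparency about where the remainder exponents come from; the paper's choice is the standard one for a classical textbook fact, and either is acceptable here.
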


\begin{proof}
See, e.g., \cite[eqs.~(9.1.7)--(9.1.9)]{abramowitz1964}.
\end{proof}

Let us emphasize that the estimates in \cref{lem:appendix-small-x} are not uniform in $n$, that is, the constants in the $\OO$ are independent of $x\to0^+$ but do depend on $n$.

\begin{lemma}[Asymptotics for large argument]\label{lem:appendix-large-x}  
When $n\in\N$ is fixed and $x\to+\infty$:
\begin{align}
& J_n(x) = \sqrt{\frac{2}{\pi x}}\cos(x - n\pi/2 - \pi/4)\left(1 + \OO\left(\frac{1}{x}\right)\right), \label{eq:large-x-Jn} \\
& H_n^{(1)}(x) = \sqrt{\frac{2}{\pi x}}e^{i(x - n\pi/2 - \pi/4)}\left(1 + \OO\left(\frac{1}{x}\right)\right). \label{eq:large-x-Hn}
\end{align}
\end{lemma}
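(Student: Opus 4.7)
These are classical asymptotic formulas that can be quoted from standard references (e.g., Abramowitz--Stegun, eqs.~(9.2.1)--(9.2.3), or Watson's treatise on Bessel functions). For a self-contained derivation, my plan would be as follows.

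First, I would reduce everything to the Hankel function. For $x\in(0,+\infty)$ one has $H_n^{(2)}(x)=\overline{H_n^{(1)}(x)}$ and the identity $J_n(x)=\tfrac{1}{2}\bigl(H_n^{(1)}(x)+H_n^{(2)}(x)\bigr)$, so \cref{eq:large-x-Jn} follows from \cref{eq:large-x-Hn} by taking the real part and using $\Re\,e^{i(x-n\pi/2-\pi/4)}=\cos(x-n\pi/2-\pi/4)$. Thus the entire content of the lemma is \cref{eq:large-x-Hn}.

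Second, to establish \cref{eq:large-x-Hn} I would start from a Sommerfeld-type integral representation,
\begin{align}
H_n^{(1)}(x)=\frac{1}{\pi i}\int_{\mathcal{C}} e^{x\sinh t - nt}\,dt,
\end{align}
where $\mathcal{C}$ is a suitable contour running from $-\infty$ to $-\infty+i\pi$, and apply the method of steepest descent. The phase $\sinh t$ has a saddle point at $t_\star=i\pi/2$, with $\sinh t_\star=i$ and $\sinh''(t_\star)=-i$. Deforming $\mathcal{C}$ onto the path of steepest descent through $t_\star$ and applying the Laplace/stationary-phase expansion yields the leading term
\begin{align}
H_n^{(1)}(x)=\sqrt{\frac{2}{\pi x}}\,e^{i(x - n\pi/2 - \pi/4)}\bigl(1+\OO(1/x)\bigr),
\end{align}
where the factor $e^{-in\pi/2}$ comes from evaluating $e^{-nt_\star}$ at the saddle and the factor $e^{-i\pi/4}$ is the rotation introduced by the square root of $-\sinh''(t_\star)=i$.

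The main obstacle is obtaining the sharp $\OO(1/x)$ remainder. The standard way is to split the steepest-descent integral into a neighborhood of $t_\star$ of size $x^{-1/2+\delta}$ for some small $\delta>0$, on which the quadratic approximation of the phase is justified and yields the prefactor $\sqrt{2\pi/x}$ together with an $\OO(1/x)$ correction from the cubic term, and a complement on which the decay of $\exp(x\,\Re\sinh t)$ along the descent directions gives exponentially small tails. These estimates are uniform in any fixed compact range of $n\in\N$, which is all that is needed since $n$ is held fixed in the statement.
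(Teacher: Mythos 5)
Your proposal is correct and takes essentially the same approach as the paper: the paper's entire proof is the citation ``See, e.g., \cite[eqs.~(9.2.1)--(9.2.3)]{abramowitz1964},'' which is precisely your opening move. The additional steepest-descent derivation via the Sommerfeld contour integral is a correct (and standard) way to make the lemma self-contained, but it goes beyond what the paper does.
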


\begin{proof}
See, e.g., \cite[eqs.~(9.2.1)--(9.2.3)]{abramowitz1964}.
\end{proof}

The estimates in \cref{lem:appendix-large-x} are not uniform in $n$ either.

\begin{lemma}[Asymptotics for large order]\label{lem:appendix-large-n}  
When $n\in\N\to\infty$:
\begin{align}\label{eq:large-n-Jn}
J_n(x) = \frac{1}{n!}\left(\frac{x}{2}\right)^{n}\left(1 + \OO\left(\frac{1}{n}\right)\right), 
\end{align}
uniformly on compact subsets of $[0,+\infty)$,
\begin{align}\label{eq:large-n-Hn}
H_n^{(1)}(x) = \frac{(n-1)!}{i\pi}\left(\frac{2}{x}\right)^{n}\left(1 + \OO\left(\frac{1}{n}\right)\right), 
\end{align}
uniformly on compact subsets of $(0,+\infty)$, and
\begin{align}\label{eq:large-n-JnHn}
J_n(x)H_n^{(1)}(x) = \frac{n}{i\pi}\left(1 + \OO\left(\frac{1}{n}\right)\right), 
\end{align}
uniformly on compact subsets of $[0,+\infty)$. 
\end{lemma}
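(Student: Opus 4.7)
The plan is to derive each asymptotic directly from the standard ascending power series for $J_n$ and $Y_n$, then assemble $H_n^{(1)}=J_n+iY_n$ and take the product.

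\textbf{Step 1 (estimate on $J_n$).} Starting from $J_n(x)=(x/2)^n\sum_{k=0}^\infty\frac{(-1)^k(x/2)^{2k}}{k!(n+k)!}$, I would factor out the $k=0$ term,
\begin{align*}
J_n(x)=\frac{1}{n!}\left(\frac{x}{2}\right)^{\!n}\bigl(1+R_n(x)\bigr),\qquad R_n(x)=\sum_{k=1}^\infty\frac{(-1)^k(x/2)^{2k}\,n!}{k!(n+k)!},
\end{align*}
and bound the remainder using $n!/(n+k)!\leq 1/(n+1)^k$. On any $[0,M]$ this gives $|R_n(x)|\leq e^{(M/2)^2/(n+1)}-1=\OO(1/n)$, uniformly in $x$, which is \cref{eq:large-n-Jn}.

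\textbf{Step 2 (estimate on $H_n^{(1)}$).} Using $H_n^{(1)}=J_n+iY_n$ together with the classical series (e.g.\ \cite[eq.~(9.1.11)]{abramowitz1964})
\begin{align*}
Y_n(x)=-\frac{1}{\pi}\sum_{k=0}^{n-1}\frac{(n-k-1)!}{k!}\left(\frac{x}{2}\right)^{\!2k-n}+\frac{2}{\pi}\log(x/2)\,J_n(x)+\Psi_n(x),
\end{align*}
where $\Psi_n(x)=-\frac{1}{\pi}\sum_{k\geq 0}(-1)^k\frac{\psi(k+1)+\psi(n+k+1)}{k!(n+k)!}(x/2)^{n+2k}$, I would factor out the leading $k=0$ term $-\frac{(n-1)!}{\pi}(2/x)^n$ of the finite sum. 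The remaining terms carry the factor $\frac{(n-k-1)!}{(n-1)!\,k!}(x/2)^{2k}$; bounding $\frac{(n-k-1)!}{(n-1)!}\leq (n-k)^{-k}\leq (n/2)^{-k}$ for $1\leq k\leq n/2$ (and by a crude pointwise estimate for $n/2<k<n$) gives an $\OO(1/n)$ relative contribution uniformly on any compact $K\subset(0,+\infty)$. Both $\log(x/2)\,J_n$ and $\Psi_n$ scale like $(x/2)^n/n!$ up to logarithms/digammas, super-exponentially smaller than $(n-1)!(2/x)^n$ on such $K$, and hence absorbed. Combined with $-i/\pi=1/(i\pi)$, and noting that $J_n$ itself is super-exponentially smaller than $iY_n$, this yields \cref{eq:large-n-Hn}.

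\textbf{Step 3 (product).} Expand $J_nH_n^{(1)}=iJ_nY_n+J_n^2$. The square term is of order $((x/2)^n/n!)^2$ and is absorbed into the error; for $iJ_nY_n$ I would multiply the leading orders from Steps 1 and 2, which gives
\begin{align*}
J_n(x)H_n^{(1)}(x)=\frac{(n-1)!}{n!\,i\pi}\bigl(1+\OO(1/n)\bigr)=\frac{1}{ni\pi}\bigl(1+\OO(1/n)\bigr),
\end{align*}
uniform on compact subsets of $[0,+\infty)$. This is consistent with the bound $1/|J_n(2\pi\epsilon)H_n^{(1)}(2\pi\epsilon)|\leq Cn$ invoked in the proof of \cref{thm:appendix-int}; I suspect \cref{eq:large-n-JnHn} as written contains a typographical $n/(i\pi)$ where $1/(ni\pi)$ is intended.

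\textbf{Main obstacle.} The only delicate bookkeeping is in Step 2, where the $n-1$ non-leading terms of the negative-power sum in $Y_n$ must all be shown to contribute at relative order $1/n$: the factorials $(n-k-1)!$ in the numerators stay large for small $k$, and the suppressing factor $(x/2)^{2k}$ is mild because $x$ ranges in a set bounded away from $0$ but not small. Splitting the range at $k=n/2$ and using the elementary ratio $(n-k-1)!/(n-1)!\leq(n-k)^{-k}$ on the first half, together with a coarse geometric bound on the second, handles this cleanly; everything else is routine power-series manipulation.
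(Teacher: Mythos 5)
Your proof is correct and follows the same route as the paper, whose entire proof of this lemma is the one-line citation ``it can be obtained from the series representation of the functions $J_n$ and $H_n^{(1)}$''---you have simply carried out that computation in detail. Your observation about \cref{eq:large-n-JnHn} is also right: multiplying \cref{eq:large-n-Jn} and \cref{eq:large-n-Hn} gives $\frac{(n-1)!}{n!\,i\pi}=\frac{1}{i\pi n}$, the classical limit $J_n(x)Y_n(x)\to-\frac{1}{\pi n}$ confirms this, and the bound $1/\vert J_n(2\pi\epsilon)H_n^{(1)}(2\pi\epsilon)\vert\leq Cn$ invoked in the proof of \cref{thm:appendix-int} is only consistent with $\frac{1}{i\pi n}$, so the displayed $\frac{n}{i\pi}$ is indeed a typo. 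One small refinement to Step~3: since your expansion of $H_n^{(1)}$ is uniform only on compact subsets of $(0,+\infty)$, you cannot literally ``multiply the leading orders from Steps~1 and~2'' and claim uniformity down to $x=0$; instead multiply the series for $J_n$ against the finite negative-power sum in $Y_n$ directly---the problematic prefactor $(2/x)^n$ cancels against $(x/2)^n$, the relative error terms carry factors $(x/2)^{2k}$ that stay bounded on all of $[0,M]$, and the logarithmic piece contributes $\OO(\vert\log(x/2)\vert(x/2)^{2n}/(n!)^2)$, which is uniformly negligible---so the stated uniformity on compact subsets of $[0,+\infty)$ does hold.
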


\begin{proof}
It can be obtained from the series representation of the functions $J_n$ and $H_n^{(1)}$; see, e.g., \cite[eqs.~(3.97)--(3.98)]{colton2019}.
\end{proof}

We note that the estimates in \cref{lem:appendix-large-n} are uniform on compact subsets of $[0,+\infty)$ for \cref{eq:large-n-Jn} and \cref{eq:large-n-JnHn}, and $(0,+\infty)$ for \cref{eq:large-n-Hn}, i.e., the constants in the $\OO$ are independent of both $n\to+\infty$ and $x$ in that subset.

For the remaining lemmas, we will assume that all variables are defined as in \cref{sec:asymptotics} and \cref{sec:mod-HK-identity}. In particular,
\begin{align}
\bs{y}_\epsilon = \lambda\epsilon^{-p}e^{i\theta_y}, \quad \bs{z}_\epsilon = \lambda\epsilon^{-q}e^{i\theta_z}, \quad 0<p<q.
\end{align}

\begin{lemma}\label{lem:appendix-dist}  
When $\epsilon\to0^+$:
\begin{align}
\vert\bs{y}_\epsilon-\bs{z}_\epsilon\vert = \lambda\epsilon^{-q}\left(1 - \epsilon^{q-p}\cos(\theta_y - \theta_z) + \OO\left(\epsilon^{2(q-p)}\right)\right).
\end{align}
For any $\bs{x}=\lambda c_xe^{i\theta_x}$ with $c_x=\OO(1)$, when $\epsilon\to0^+$:
\begin{align}
\vert\bs{x}-\bs{y}_\epsilon\vert = \lambda\epsilon^{-p}\left(1 - c_x\epsilon^{p}\cos(\theta_x - \theta_y) + \OO\left(\epsilon^{2p}\right)\right).
\end{align}
\end{lemma}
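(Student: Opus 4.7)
Both formulas are straightforward Taylor expansions of the Euclidean distance, and the key structural observation is that in each case the two points have magnitudes of very different orders in $\epsilon$: for the first identity, $\vert\bs{z}_\epsilon\vert = \lambda\epsilon^{-q}$ dominates $\vert\bs{y}_\epsilon\vert = \lambda\epsilon^{-p}$ since $q > p$; for the second, $\vert\bs{y}_\epsilon\vert = \lambda\epsilon^{-p}$ dominates $\vert\bs{x}\vert = \lambda c_x$ because $c_x = \OO(1)$. In both situations, the plan is to expand the squared distance via the law of cosines, factor out the larger of the two magnitudes, and then apply the first-order Taylor expansion of $\sqrt{1+u}$ about $u=0$.

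Concretely, for the first identity I would start from $\vert\bs{y}_\epsilon - \bs{z}_\epsilon\vert^2 = \vert\bs{y}_\epsilon\vert^2 + \vert\bs{z}_\epsilon\vert^2 - 2\,\bs{y}_\epsilon\cdot\bs{z}_\epsilon$ and rewrite it as
\begin{align*}
\vert\bs{y}_\epsilon - \bs{z}_\epsilon\vert^2 = \lambda^2\epsilon^{-2q}\bigl(1 - 2\epsilon^{q-p}\cos(\theta_y - \theta_z) + \epsilon^{2(q-p)}\bigr),
\end{align*}
using $\bs{y}_\epsilon\cdot\bs{z}_\epsilon = \lambda^2\epsilon^{-p-q}\cos(\theta_y - \theta_z)$. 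Since $q > p$, the quantity $u = -2\epsilon^{q-p}\cos(\theta_y - \theta_z) + \epsilon^{2(q-p)}$ is $\OO(\epsilon^{q-p}) \to 0$, so $\sqrt{1+u} = 1 + u/2 + \OO(u^2)$ yields the desired identity, with the linear term $-\epsilon^{q-p}\cos(\theta_y - \theta_z)$ explicit and all remaining contributions absorbed into an $\OO(\epsilon^{2(q-p)})$ remainder.

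The second identity follows the same template: I would factor out $\lambda^2\epsilon^{-2p}$ from $\vert\bs{x}\vert^2 + \vert\bs{y}_\epsilon\vert^2 - 2\,\bs{x}\cdot\bs{y}_\epsilon$ to obtain $\lambda^2\epsilon^{-2p}\bigl(1 - 2c_x\epsilon^p\cos(\theta_x - \theta_y) + c_x^2\epsilon^{2p}\bigr)$, and then apply the same square-root Taylor expansion with $u = \OO(\epsilon^p)$. There is no genuine obstacle; the calculation is entirely elementary. The only care required is to verify that the constants implicit in the $\OO$ terms depend only on the $\OO(1)$ bound assumed for $c_x$ and are therefore independent of $\epsilon$, which is precisely the form of uniformity needed when these estimates are invoked elsewhere in the paper.
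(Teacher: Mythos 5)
Your proposal is correct and follows essentially the same route as the paper: factor the squared distance via the law of cosines, pull out the dominant magnitude ($\lambda\epsilon^{-q}$ or $\lambda\epsilon^{-p}$), and expand $\sqrt{1+u}$ to first order. The only (immaterial) difference is that you keep the $c_x^2$ in the $\epsilon^{2p}$ term, which is in any case absorbed into the $\OO(\epsilon^{2p})$ remainder since $c_x=\OO(1)$.
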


\begin{proof} 
We write
\begin{align}
\vert\bs{y}_\epsilon-\bs{z}_\epsilon\vert = \lambda\epsilon^{-q}\left(1 - 2\epsilon^{q-p}\cos(\theta_y - \theta_z) + \epsilon^{2(q-p)}\right)^{1/2}.
\end{align}
Then, for small enough $\epsilon$,
\begin{align}
\left(1 - 2\epsilon^{q-p}\cos(\theta_y - \theta_z) + \epsilon^{2(q-p)}\right)^{1/2} = 1 - \epsilon^{q-p}\cos(\theta_y - \theta_z) + \OO\left(\epsilon^{2(q-p)}\right).
\end{align}
We conclude by multiplying by $\lambda\epsilon^{-q}$. We obtain the result for $\vert\bs{x}-\bs{y}_\epsilon\vert$ by writing
\begin{align}
\vert\bs{x}_\epsilon-\bs{y}_\epsilon\vert = \lambda\epsilon^{-p}\left(1 - 2c_x\epsilon^{p}\cos(\theta_x - \theta_y) + \epsilon^{2p}\right)^{1/2}.
\end{align}
\end{proof}

\begin{lemma}\label{lem:appendix-hank}
When $\epsilon\to0^+$:
\begin{align}
H_0^{(1)}(k\vert\bs{y}_\epsilon-\bs{z}_\epsilon\vert) = \frac{\epsilon^{q/2}}{\pi}e^{i\left(2\pi\epsilon^{-q}\left[1 - \epsilon^{q-p}\cos(\theta_y - \theta_z) + \OO\left(\epsilon^{2(q-p)}\right)\right] - \pi/4\right)}\left(1 + \OO\left(\epsilon^{q-p}\right)\right).
\end{align}
For any $\bs{x}=\lambda c_xe^{i\theta_x}$ with $c_x=\OO(1)$, when $\epsilon\to0^+$:
\end{lemma}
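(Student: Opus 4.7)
The plan is to directly substitute the distance asymptotics from \cref{lem:appendix-dist} into the large-argument expansion of $H_0^{(1)}$ given by \cref{lem:appendix-large-x}, tracking the error terms carefully. Since $k = 2\pi/\lambda$ and $0<p<q$, \cref{lem:appendix-dist} yields
\begin{align*}
k\vert\bs{y}_\epsilon - \bs{z}_\epsilon\vert = 2\pi\epsilon^{-q}\bigl(1 - \epsilon^{q-p}\cos(\theta_y-\theta_z) + \OO(\epsilon^{2(q-p)})\bigr),
\end{align*}
which tends to $+\infty$ as $\epsilon\to0^+$, so \cref{lem:appendix-large-x} applies with $n=0$.

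First I would handle the amplitude factor $\sqrt{2/(\pi x)}$ with $x = k\vert\bs{y}_\epsilon-\bs{z}_\epsilon\vert$. Factoring out $2\pi\epsilon^{-q}$ and using the Taylor expansion $(1+t)^{-1/2} = 1 + \OO(t)$ for the perturbation $t = \OO(\epsilon^{q-p})$ gives
\begin{align*}
\sqrt{\tfrac{2}{\pi k\vert\bs{y}_\epsilon-\bs{z}_\epsilon\vert}} = \tfrac{\epsilon^{q/2}}{\pi}\bigl(1 + \OO(\epsilon^{q-p})\bigr).
\end{align*}
Next I would leave the phase factor $e^{i(x-\pi/4)}$ in its raw form, simply substituting the expansion of $k\vert\bs{y}_\epsilon-\bs{z}_\epsilon\vert$ term-by-term inside the exponent to produce the bracketed phase $[1 - \epsilon^{q-p}\cos(\theta_y-\theta_z) + \OO(\epsilon^{2(q-p)})]$. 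Finally, the multiplicative error $1 + \OO(1/x)$ in \cref{lem:appendix-large-x} is of order $\OO(\epsilon^q)$, and since $q > q-p$ (because $p>0$), this is absorbed into the larger error $\OO(\epsilon^{q-p})$. Combining the three factors yields the desired formula for $H_0^{(1)}(k\vert\bs{y}_\epsilon-\bs{z}_\epsilon\vert)$.

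For the second estimate with argument $k\vert\bs{x}-\bs{y}_\epsilon\vert$ and $\bs{x}=\lambda c_x e^{i\theta_x}$, the reasoning is identical but based on the second formula of \cref{lem:appendix-dist}: we obtain $k\vert\bs{x}-\bs{y}_\epsilon\vert = 2\pi\epsilon^{-p}(1 - c_x\epsilon^p\cos(\theta_x-\theta_y) + \OO(\epsilon^{2p}))$, and the same three-step substitution produces an analogous expansion with $\epsilon^{p/2}/\pi$ as prefactor, the bracketed phase $2\pi\epsilon^{-p}[1 - c_x\epsilon^p\cos(\theta_x-\theta_y) + \OO(\epsilon^{2p})]$, and error $\OO(\epsilon^p)$.

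I do not anticipate a real obstacle here: the result is a direct bookkeeping exercise. The only subtlety worth emphasizing is verifying that the multiplicative Hankel error $\OO(1/x) = \OO(\epsilon^q)$ is dominated by $\OO(\epsilon^{q-p})$ (respectively $\OO(\epsilon^p)$ by $\OO(\epsilon^{2p})$ is the wrong way around---one must check that $\OO(\epsilon^p)$ absorbs $\OO(1/\epsilon^{-p}) = \OO(\epsilon^p)$, which it does trivially), so that the stated single error term is indeed the dominant one in each case.
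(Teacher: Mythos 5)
Your proposal is correct and follows exactly the paper's own (one-line) proof: combine the distance expansions of \cref{lem:appendix-dist} with the large-argument asymptotics of \cref{lem:appendix-large-x}, checking that the amplitude gives $\epsilon^{q/2}/\pi$ (resp.\ $\epsilon^{p/2}/\pi$) and that the $\OO(1/x)$ Hankel error is absorbed into the stated multiplicative error. The bookkeeping in your write-up is accurate, including the observation that the residual phase error $2\pi\epsilon^{-p}\cdot\OO(\epsilon^{2p})=\OO(\epsilon^p)$ can be folded into the $(1+\OO(\epsilon^p))$ factor in the second formula.
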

\begin{align}
H_0^{(1)}(k\vert\bs{x}-\bs{y}_\epsilon\vert) = \frac{\epsilon^{p/2}}{\pi}e^{i\left(2\pi\epsilon^{-p}\left[1 - c_x\epsilon^{p}\cos(\theta_x - \theta_y)\right] - \pi/4\right)}\left(1 + \OO\left(\epsilon^{p}\right)\right).
\end{align}
\begin{proof}
This is obtained by combining \cref{lem:appendix-dist} with \cref{lem:appendix-large-x}.
\end{proof}

\begin{lemma}\label{lem:appendix-integral}
For any $c_x=\OO(1)$, when $\epsilon\to0^+$:
\begin{align}
\left\langle e^{-2i\pi\left(\epsilon^{-p}\cos(\theta_y - \theta_z) + c_x\cos(\theta_x - \theta_y)\right)}\right\rangle = & \; \frac{\epsilon^{p/2}}{\pi}\cos\left(2\pi\epsilon^{-p}\left[1 + c_x\epsilon^{p}\cos(\theta_x - \theta_z)\right] - \pi/4\right) \\
& \; \times \left(1 + \OO\left(\epsilon^p\right)\right). \nonumber
\end{align}
\end{lemma}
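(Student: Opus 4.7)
The plan is to recognize the average as a classical Bessel function integral and then apply its large-argument asymptotics. Since $\Sigma_\epsilon$ is the circle of radius $\lambda\epsilon^{-p}$ centered at the origin, the average reduces to $\langle\cdot\rangle = \frac{1}{2\pi}\int_0^{2\pi}(\cdot)\,d\theta_y$. I would combine the two cosines appearing in the exponent into a single cosine by writing
\[
\epsilon^{-p}\cos(\theta_y - \theta_z) + c_x\cos(\theta_y - \theta_x) = \mathrm{Re}\bigl[e^{-i\theta_y}(\epsilon^{-p}e^{i\theta_z} + c_x e^{i\theta_x})\bigr] = R\cos(\theta_y - \phi),
\]
where $(R,\phi)$ are the polar coordinates of $\epsilon^{-p}e^{i\theta_z} + c_x e^{i\theta_x}$. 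Since $R$ and $\phi$ do not depend on $\theta_y$, translation invariance of the $[0,2\pi]$ integral combined with the classical representation $J_0(z) = \frac{1}{2\pi}\int_0^{2\pi} e^{-iz\cos\theta}\,d\theta$ yields $\langle\cdot\rangle = J_0(2\pi R)$.

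Next, I would expand $R$ via the law of cosines,
\[
R^2 = \epsilon^{-2p} + c_x^2 + 2 c_x \epsilon^{-p}\cos(\theta_z - \theta_x) = \epsilon^{-2p}\bigl(1 + 2 c_x \epsilon^{p}\cos(\theta_x - \theta_z) + c_x^2 \epsilon^{2p}\bigr),
\]
so that $R = \epsilon^{-p}\bigl(1 + c_x \epsilon^{p}\cos(\theta_x - \theta_z) + \OO(\epsilon^{2p})\bigr)$. Plugging this into the $n=0$ large-argument asymptotic of \cref{lem:appendix-large-x} and using $1/\sqrt{R} = \epsilon^{p/2}(1+\OO(\epsilon^p))$ together with $1/R = \OO(\epsilon^p)$ would give
\[
J_0(2\pi R) = \frac{1}{\pi\sqrt{R}}\cos(2\pi R - \pi/4)\bigl(1+\OO(\epsilon^{p})\bigr) = \frac{\epsilon^{p/2}}{\pi}\cos(2\pi R - \pi/4)\bigl(1+\OO(\epsilon^{p})\bigr).
\]

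Finally, I would transfer the expansion of $R$ into the argument of the cosine: writing $2\pi R = 2\pi\epsilon^{-p}[1 + c_x\epsilon^p\cos(\theta_x - \theta_z)] + \OO(\epsilon^p)$ and invoking the Lipschitz bound $\cos(\alpha+\delta) = \cos\alpha + \OO(\delta)$ turns the right-hand side into the cosine appearing in the statement. The main subtlety is precisely this error bookkeeping in the last step: the $\OO(\epsilon^{2p})$ relative correction to $R$, once amplified by the prefactor $2\pi\epsilon^{-p}$ inside the cosine, produces an $\OO(\epsilon^{p})$ additive perturbation of the cosine argument; after multiplication by the outer factor $\epsilon^{p/2}/\pi$ this yields an overall $\OO(\epsilon^{3p/2})$ remainder, consistent with the stated $(1+\OO(\epsilon^p))$ multiplicative factor.
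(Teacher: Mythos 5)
Your proposal is correct and follows essentially the same route as the paper: the paper likewise identifies the average with $J_0(2\pi h)$, where $h=\vert\epsilon^{-p}e^{i\theta_z}+c_xe^{i\theta_x}\vert$ is exactly your $R$, expands $h$, and applies the large-argument asymptotics of \cref{lem:appendix-large-x}. You simply fill in the details the paper leaves implicit (the reduction to the Bessel integral via translation invariance and the error bookkeeping in the cosine argument), which is done correctly.
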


\begin{proof}
The integral on the left-hand side equals $J_0(2\pi h)$ with
\begin{align}
h = \sqrt{(\epsilon^{-p}\cos\theta_z + c_x\cos\theta_x)^2 + (\epsilon^{-p}\sin\theta_z + c_x\sin\theta_x)^2}.
\end{align}
We then obtain an expansion for $h$ and utilize \cref{lem:appendix-large-x} for $J_0$.
\end{proof}

\begin{lemma}\label{lem:appendix-average}
For any $\bs{x}=c_x\lambda e^{i\theta_x}$ with $c_x=\OO(1)$, when $\epsilon\to0^+$:
\begin{align}
\tilde{v}^i_\epsilon(\bs{x},\bs{y}_\epsilon,\bs{z}_\epsilon) = & -\frac{\epsilon^{p/2}\epsilon^{q/2}}{4\pi^2H_0^{(1)}(2\pi\epsilon)}e^{2i\pi\left(\epsilon^{-q} + \epsilon^{-p} - \epsilon^{-p}\cos(\theta_y-\theta_z) - c_x\cos(\theta_x-\theta_y) + \OO\left(\epsilon^{q-2p}\right)\right)} \\
& \times \left(1 + \OO\left(\epsilon^{\min(p,\,q-p)}\right)\right) \nonumber
\end{align}
and
\begin{align}
\langle\tilde{v}^i_\epsilon\rangle(\bs{x},\bs{z}_\epsilon) = & -\frac{\epsilon^{p}\epsilon^{q/2}}{4\pi^3H_0^{(1)}(2\pi\epsilon)}\cos\left(2\pi\epsilon^{-p}\left[1 + c_x\epsilon^{p}\cos(\theta_x - \theta_z) \right] - \pi/4\right) \\
& \times e^{2i\pi\left(\epsilon^{-q} + \epsilon^{-p} + \OO\left(\epsilon^{q-2p}\right)\right)}\left(1 + \OO\left(\epsilon^{\min(p,\,q-p)}\right)\right). \nonumber
\end{align}
\end{lemma}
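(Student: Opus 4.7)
The plan is to obtain the first expansion by directly plugging the asymptotic formulas of \cref{lem:appendix-hank} into the explicit product representation of $\tilde{v}^i_\epsilon$, and then to obtain the second expansion by integrating this first expansion against $\theta_y$ and invoking \cref{lem:appendix-integral}.

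First, by \cref{eq:v^it} and \cref{eq:mu},
\begin{align*}
\tilde{v}^i_\epsilon(\bs{x},\bs{y}_\epsilon,\bs{z}_\epsilon)
= -\frac{i}{4}\,\frac{H_0^{(1)}(k\vert\bs{y}_\epsilon-\bs{z}_\epsilon\vert)}{H_0^{(1)}(2\pi\epsilon)}\,H_0^{(1)}(k\vert\bs{x}-\bs{y}_\epsilon\vert).
\end{align*}
I would apply the two estimates of \cref{lem:appendix-hank} to the numerator Hankel functions, then form the product. The amplitude prefactors combine as $(\epsilon^{q/2}/\pi)(\epsilon^{p/2}/\pi)=\epsilon^{p/2}\epsilon^{q/2}/\pi^2$, the two $e^{-i\pi/4}$ factors combine into $e^{-i\pi/2}=-i$, which cancels with the explicit $-i/4$ to yield the overall prefactor $-\epsilon^{p/2}\epsilon^{q/2}/(4\pi^2 H_0^{(1)}(2\pi\epsilon))$. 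Adding the exponents and simplifying the bracket inside the first exponential of \cref{lem:appendix-hank} (i.e.\ writing $2\pi\epsilon^{-q}[1-\epsilon^{q-p}\cos(\theta_y-\theta_z)+\OO(\epsilon^{2(q-p)})] = 2\pi\epsilon^{-q} - 2\pi\epsilon^{-p}\cos(\theta_y-\theta_z) + \OO(\epsilon^{q-2p})$) produces the exponential displayed in the first claim, and multiplying the two $(1+\OO(\cdot))$ factors yields an error $\OO(\epsilon^{\min(p,q-p)})$.

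Second, to average with respect to $\bs{y}_\epsilon\in\Sigma_\epsilon$, I would separate the $\theta_y$-dependent and $\theta_y$-independent factors in the first expansion. The only $\theta_y$-dependence sits in the exponential $e^{-2i\pi(\epsilon^{-p}\cos(\theta_y-\theta_z)+c_x\cos(\theta_x-\theta_y))}$, while the remaining exponential $e^{2i\pi(\epsilon^{-q}+\epsilon^{-p}+\OO(\epsilon^{q-2p}))}$ and the amplitude prefactor can be pulled out of the average. Applying \cref{lem:appendix-integral} to the remaining oscillatory integral yields the cosine factor, with a further amplitude gain of $\epsilon^{p/2}/\pi$, giving the prefactor $-\epsilon^p\epsilon^{q/2}/(4\pi^3 H_0^{(1)}(2\pi\epsilon))$ and multiplying the two $(1+\OO(\cdot))$ factors preserves the error $\OO(\epsilon^{\min(p,q-p)})$ (since $p<\min(p,q-p)$ is false in general, but absorbing $\OO(\epsilon^p)$ into $\OO(\epsilon^{\min(p,q-p)})$ is automatic, as $p\geq\min(p,q-p)$).

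The calculation is mostly bookkeeping; the main obstacle is tracking the various error terms carefully. The critical point is that the error of order $\OO(\epsilon^{2(q-p)})$ inside the bracket of the Hankel expansion, once multiplied by the prefactor $2\pi\epsilon^{-q}$ outside, contributes the $\OO(\epsilon^{q-2p})$ correction appearing inside the exponent of the final formulas, whereas the multiplicative error $\OO(\epsilon^{q-p})$ from \cref{lem:appendix-hank} combines with the $\OO(\epsilon^p)$ error from the second estimate of the same lemma and with the $\OO(\epsilon^p)$ error produced by \cref{lem:appendix-integral} into a single $\OO(\epsilon^{\min(p,q-p)})$ prefactor correction.
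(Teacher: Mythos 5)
Your proposal is correct and follows essentially the same route as the paper's (very terse) proof: write $\tilde{v}^i_\epsilon = -\tfrac{i}{4}H_0^{(1)}(k\vert\bs{y}_\epsilon-\bs{z}_\epsilon\vert)H_0^{(1)}(k\vert\bs{x}-\bs{y}_\epsilon\vert)/H_0^{(1)}(2\pi\epsilon)$, apply \cref{lem:appendix-hank} to both factors, and then average the only $\theta_y$-dependent exponential via \cref{lem:appendix-integral}; your bookkeeping of the prefactors, the $e^{-i\pi/2}=-i$ cancellation, and the conversion of the $\OO(\epsilon^{2(q-p)})$ bracket error into the $\OO(\epsilon^{q-2p})$ phase error all check out. (Incidentally, your product representation with $H_0^{(1)}(k\vert\bs{x}-\bs{y}_\epsilon\vert)$ is the correct one; the paper's proof displays $H_0^{(1)}(k\vert\bs{x}-\bs{z}_\epsilon\vert)$, which is a typo.)
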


\begin{proof}
We recall that
\begin{align}
\tilde{v}^i_\epsilon(\bs{x},\bs{y}_\epsilon,\bs{z}_\epsilon) = -\frac{i}{4}\frac{H_0^{(1)}(k\vert\bs{y}_\epsilon-\bs{z}_\epsilon\vert)}{H_0^{(1)}(2\pi\epsilon)}H_0^{(1)}(k\vert\bs{x}-\bs{z}_\epsilon\vert).
\end{align}
The estimates are direct consequences of \cref{lem:appendix-hank} and \cref{lem:appendix-integral}.
\end{proof}

\begin{lemma}\label{lem:appendix-scal}
When $\epsilon\to0^+$:
\begin{align}
\vert\mu_\epsilon(\bs{y}_\epsilon,\bs{z}_\epsilon)^{-1}\vert^2 = \pi^2\vert H_0^{(1)}(2\pi\epsilon)\vert^2\epsilon^{-q}\left(1 + \OO\left(\epsilon^{q-p}\right)\right).
\end{align}
\end{lemma}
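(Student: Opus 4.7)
The plan is to recognize that this is an essentially mechanical consequence of \cref{lem:appendix-hank}, together with the definition \cref{eq:mu} of $\mu_\epsilon$. Starting from
\begin{align}
\mu_\epsilon(\bs{y}_\epsilon,\bs{z}_\epsilon)^{-1} = -\frac{H_0^{(1)}(2\pi\epsilon)}{H_0^{(1)}(k\vert\bs{y}_\epsilon-\bs{z}_\epsilon\vert)},
\end{align}
I would take moduli squared to get
\begin{align}
\vert\mu_\epsilon(\bs{y}_\epsilon,\bs{z}_\epsilon)^{-1}\vert^2 = \frac{\vert H_0^{(1)}(2\pi\epsilon)\vert^2}{\vert H_0^{(1)}(k\vert\bs{y}_\epsilon-\bs{z}_\epsilon\vert)\vert^2},
\end{align}
so the entire task reduces to expanding the denominator.

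For the denominator, I would invoke \cref{lem:appendix-hank}. The key observation is that the phase appearing in the asymptotic expression of $H_0^{(1)}(k\vert\bs{y}_\epsilon-\bs{z}_\epsilon\vert)$, namely $2\pi\epsilon^{-q}[1 - \epsilon^{q-p}\cos(\theta_y-\theta_z)+\OO(\epsilon^{2(q-p)})] - \pi/4$, is real, so the modulus of the exponential factor is exactly $1$. The prefactor $\epsilon^{q/2}/\pi$ together with the multiplicative error $(1+\OO(\epsilon^{q-p}))$ then yields
\begin{align}
\vert H_0^{(1)}(k\vert\bs{y}_\epsilon-\bs{z}_\epsilon\vert)\vert^2 = \frac{\epsilon^{q}}{\pi^2}\bigl(1 + \OO(\epsilon^{q-p})\bigr).
\end{align}

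Finally, taking reciprocals (using $1/(1+\OO(\epsilon^{q-p})) = 1 + \OO(\epsilon^{q-p})$ for small $\epsilon$, which is valid since $q>p>0$) gives
\begin{align}
\frac{1}{\vert H_0^{(1)}(k\vert\bs{y}_\epsilon-\bs{z}_\epsilon\vert)\vert^2} = \pi^2 \epsilon^{-q}\bigl(1 + \OO(\epsilon^{q-p})\bigr),
\end{align}
and multiplying by $\vert H_0^{(1)}(2\pi\epsilon)\vert^2$ yields exactly the claimed formula. There is no real obstacle here; the only mild subtlety is verifying that the phase in \cref{lem:appendix-hank} is indeed real so that the exponential factor disappears under taking modulus, and that the relative error $\OO(\epsilon^{q-p})$ from the Hankel asymptotics dominates the subleading $\OO(\epsilon^q)$ correction that would come from a direct application of \cref{lem:appendix-large-x} alone.
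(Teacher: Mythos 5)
Your proposal is correct and follows essentially the same route as the paper's proof: both reduce the claim to the definition \cref{eq:mu} of $\mu_\epsilon$ and the expansion of $H_0^{(1)}(k\vert\bs{y}_\epsilon-\bs{z}_\epsilon\vert)$ from \cref{lem:appendix-hank}, the only cosmetic difference being that the paper phrases the modulus computation via $\vert z\vert^2 = z\overline{z}$ applied to the reciprocal and its conjugate, while you observe directly that the phase is real so the exponential drops out. No gap.
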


\begin{proof}
We recall that $\mu_\epsilon(\bs{y}_\epsilon,\bs{z}_\epsilon) = -H_0^{(1)}(k\vert\bs{y}_\epsilon-\bs{z}_\epsilon\vert)/H_0^{(1)}(2\pi\epsilon)$. Therefore,
\begin{align}
\vert\mu_\epsilon(\bs{y}_\epsilon,\bs{z}_\epsilon)^{-1}\vert^2 = \vert H_0^{(1)}(2\pi\epsilon)\vert^2 \vert H_0^{(1)}(k\vert\bs{y}_\epsilon-\bs{z}_\epsilon\vert)^{-1}\vert^2.
\end{align}
We use \cref{lem:appendix-hank} to get an expansion for $H_0^{(1)}(k\vert\bs{y}_\epsilon-\bs{z}_\epsilon\vert)^{-1}$ and its complex conjugate, and utilize the fact that $\vert z\vert^2=z\overline{z}$. 
\end{proof}

\bibliographystyle{siamplain}
\bibliography{/Users/montanelli/Dropbox/HM/WORK/ACADEMIA/BIBLIOGRAPHY/references.bib}

\end{document}